\providecommand{\U}[1]{\protect\rule{.1in}{.1in}}
\newtheorem{Theorem}{Theorem}[section]
\newtheorem{Proposition}[Theorem]{Proposition}
\newtheorem{Lemma}[Theorem]{Lemma}
\newtheorem{Corollary}[Theorem]{Corollary}
\theoremstyle{definition}
\newtheorem{Definition}[Theorem]{Definition}
\theoremstyle{remark}
\newtheorem{Example}[Theorem]{Example}
\newtheorem{Remark}[Theorem]{Remark}
\numberwithin{equation}{section}
\newcommand{\be}{\begin{equation}}
\newcommand{\ee}{\end{equation}}
\newenvironment{Proof}[1][Proof]{\noindent\textbf{#1:} }{\rule{0.5em}{0.5em}}
\begin{document}

\title{Generalized wave polynomials and transmutations related to perturbed Bessel equations }
\author{Vladislav V. Kravchenko, Sergii M. Torba and Jessica Yu. Santana-Bejarano\\{\small Departamento de Matemáticas, CINVESTAV del IPN, Unidad Querétaro, }\\{\small Libramiento Norponiente No. 2000, Fracc. Real de Juriquilla,
Querétaro, Qro. C.P. 76230 MEXICO}\\{\small e-mail: vkravchenko@math.cinvestav.edu.mx,
storba@math.cinvestav.edu.mx, {\ jsantana@math.cinvestav.mx} \thanks{The
authors acknowledge the support from CONACYT, Mexico via the projects 166141
and 222478.}}}
\maketitle

\begin{abstract}
The transmutation (transformation) operator associated with the perturbed
Bessel equation is considered. It is shown that its integral kernel can be
uniformly approximated by linear combinations of constructed here generalized
wave polynomials, solutions of a singular hyperbolic partial differential
equation arising in relation with the transmutation kernel. As a corollary of
this results an approximation of the regular solution of the perturbed Bessel
equation is proposed with corresponding estimates independent of the spectral parameter.
\end{abstract}

\section{Introduction}

The perturbed Bessel equation
\begin{equation}
-\mathbf{L}u(x):=-u^{\prime\prime}\left(  x\right)  +\left(  \frac{l\left(
l+1\right)  }{x^{2}}+q\left(  x\right)  \right)  u\left(  x\right)  =\lambda
u\left(  x\right)  ,\qquad l\geq-\frac{1}{2},\quad x\in\left(  0,b\right]
\label{Intro Bessel perturbed}%
\end{equation}
with $q$ being a continuous complex valued function and $\lambda$ an arbitrary
complex constant arises in numerous physical models describing wave
propagation in axially or spherically inhomogeneous media (see, e.g.,
\cite{Flugge} and \cite{Okamoto}). Solution of spectral problems for
(\ref{Intro Bessel perturbed}) is of special interest.

In \cite{Volk} and \cite{Stashevskaya} it was shown that there exists a
Volterra integral operator $\mathbf{T}$ defined by
\[
\mathbf{T}\left[  \varphi\right]  \left(  x\right)  =\varphi\left(  x\right)
+\int_{0}^{x}{K\left(  x,t\right)  \varphi\left(  t\right)  }dt
\]
for any $\varphi\in C\left(  \left[  0,b\right]  \right)  $, and with a
continuous kernel $K$ such that a regular solution of
(\ref{Intro Bessel perturbed}) can be written as
\[
\mathbf{u}(x,\lambda)=\mathbf{T}\left[  \mathbf{d}_{l}(x,\lambda)\right]
\]
where $\mathbf{d}_{l}(x,\lambda):=\sqrt{x}\mathbf{J}_{l+1/2}\left(
\sqrt{\lambda}x\right)  $ is a regular solution of the equation
\begin{equation}
-\mathbf{L}_{0}y(x):=-y^{\prime\prime}\left(  x\right)  +\frac{l\left(
l+1\right)  }{x^{2}}y\left(  x\right)  =\lambda y\left(  x\right)
\label{IntroBessel0}%
\end{equation}
with $\mathbf{J}_{p}$ standing for the Bessel function of the first kind and
of order $p$. Roughly speaking, the knowledge of the kernel $K$ allows one to
get rid of the potential $q$ reducing (\ref{Intro Bessel perturbed}) to a more
elementary (\ref{IntroBessel0}).

The transmutation kernel $K$ is a solution of a certain Goursat problem (see
Section 3 below) for the equation
\begin{equation}
\left(  \square-\frac{l\left(  l+1\right)  }{x^{2}}+\frac{l\left(  l+1\right)
}{t^{2}}-q\left(  x\right)  \right)  u\left(  x,t\right)  =0
\label{Intro KG-sing}%
\end{equation}
considered in the domain $0<t\leq x\leq b$. Here $\square:=\frac{\partial^{2}%
}{\partial x^{2}}-\frac{\partial^{2}}{\partial t^{2}}$. This partial
differential equation generalizes that arising in relation with the
transmutation kernel in the case of a regular one-dimensional Schrödinger
equation. In \cite{KT 2015 CAOT}, \cite{KT 2015 JCAM} it was shown that the
equation corresponding to the regular case
\[
\left(  \square-q\left(  x\right)  \right)  u\left(  x,t\right)  =0
\]
admits a complete system of solutions called generalized wave polynomials
which can be constructed using a certain recurrent integration procedure. In
particular, the transmutation kernel can be approximated globally by the
generalized wave polynomials, and the coefficients of approximation can be
found from the Goursat boundary conditions.

The aim of the present work is (a) to construct a system of generalized wave
polynomials $u_{k}$ corresponding to (\ref{Intro KG-sing}), (b) to prove that
the transmutation kernel $K$ admits a uniform approximation by their linear
combinations, (c) to show how the coefficients of approximation can be found,
and (d) to obtain a corresponding result concerning the approximate solution
$\mathbf{u}_{N}$ of (\ref{Intro Bessel perturbed}) obtained with the aid of
the approximate transmutation kernel establishing the smallness of the
difference between the exact and the approximate solutions.

The aim (a) is achieved in two steps. First, we consider an integral operator
$Y_{l,x}$ relating the operator $\frac{d^{2}}{dx^{2}}$ with the operator
$\frac{d^{2}}{dx^{2}}-\frac{l\left(  l+1\right)  }{x^{2}}$ on a class of
functions. This operator is known since long ago (see \cite{Sitnik} and
\cite{Sitnik Review} for historical remarks). We precise several known results
concerning its mapping properties and use its composition with $Y_{l,t}$ (same
operator but with respect to the variable $t$) in order to obtain a system of
solutions $U_{k}$ of the equation
\begin{equation}
\left(  \square-\frac{l\left(  l+1\right)  }{x^{2}}+\frac{l\left(  l+1\right)
}{t^{2}}\right)  U\left(  x,t\right)  =0 \label{Intro KG-sing0}%
\end{equation}
as images of the wave polynomials (a family of polynomial solutions of the
wave equation \cite{KhmeKravTorTrem}, \cite{KT 2015 CAOT}, \cite{KT 2015
JCAM}). Second, using a recent result from \cite{CasKravTor} regarding a
mapping property of the operator $\mathbf{T}$, we obtain the system of the
generalized wave polynomials $u_{k}$ for (\ref{Intro KG-sing}) as images under
the action of $\mathbf{T}$ of $U_{k}$ obtained on the previous step.

In order to achieve the aim (b) we study the preimage $k$ of the transmutation
kernel $K$, that is $k:=\mathbf{T}^{-1}\left[  K\right]  $ which satisfies a
Goursat problem for (\ref{Intro KG-sing0}). We prove that it admits a uniform
approximation by linear combinations of $U_{k}$ and deduce from there that $K$
admits a uniform approximation by linear combinations of $u_{k}$.

Regarding (c) we explain how the approximation coefficients can be found by
solving an approximation problem on the segment $\left[  0,b\right]  $ arising
from the Goursat boundary condition corresponding to the kernel $K$.

Finally, regarding (d) we prove that for the approximate solution
$\mathbf{u}_{N}$ of (\ref{Intro Bessel perturbed}) obtained via the
approximate transmutation kernel the estimate
\[
\left\vert \int_{0}^{x}{\left(  \mathbf{L}+\lambda\right)  \left[
\mathbf{u}_{N}\left(  s\right)  \right]  }ds\right\vert \leq2\varepsilon
\sqrt{x}%
\]
is valid, independent of $\lambda\in\mathbb{R}$ which makes the method
proposed in the present work attractive for solving
(\ref{Intro Bessel perturbed}) on large intervals with respect to $\lambda$. We illustrate the accuracy of the method based on the constructed approximate solutions by solving numerically two spectral problems.

\section{From regular to singular}

\subsection{Definition of the operator relating $\mathbf{L}_{0}$ with
$\frac{d^{2}}{dx^{2}}$}

Consider the following integral operator defined on $C\left(  \left[
0,b\right]  \right)  $,
\begin{equation}
Y_{l,x}f(x)=\frac{x^{-l}}{2^{l+\frac{1}{2}}\Gamma{\left(  l+\frac{3}%
{2}\right)  }}\int_{0}^{x}{\left(  x^{2}-s^{2}\right)  ^{l}f\left(  s\right)
}ds,\qquad l\geq-\frac{1}{2}. \label{Eq1.8}%
\end{equation}
With the change of the variable $s=xz$ (\ref{Eq1.8}) can be written in the
form
\begin{equation*}
Y_{l,x}f\left(  x\right)  =\frac{x^{l+1}}{2^{l+\frac{1}{2}}\Gamma{\left(
l+\frac{3}{2}\right)  }}\int_{0}^{1}{\left(  1-z^{2}\right)  ^{l}f\left(
xz\right)  }dz. 
\end{equation*}

\begin{Theorem}
\label{Teorema 1} For any $f\in C^{2}\left(  \left[  0,b\right]  \right)  $
the following equality is valid
\begin{equation}
\mathbf{L}_{0}Y_{l,x}{\left[  f\left(  x\right)  -f^{\prime}\left(  0\right)
x\right]  }=Y_{l,x}\left[  f^{\prime\prime}\left(  x\right)  \right]
,\qquad\forall x\in\left[  0,b\right]  . \label{Eq.T}%
\end{equation}

\end{Theorem}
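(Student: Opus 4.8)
The plan is to verify the operator identity \eqref{Eq.T} by direct computation, differentiating under the integral sign in the scaled representation $Y_{l,x}f(x)=\frac{x^{l+1}}{2^{l+1/2}\Gamma(l+3/2)}\int_0^1(1-z^2)^l f(xz)\,dz$, which is the most convenient form since the $z$-integration domain is fixed. First I would compute $\mathbf{L}_0 Y_{l,x}[g]$ for a generic $g\in C^2$: applying $\frac{d^2}{dx^2}$ to $x^{l+1}\int_0^1(1-z^2)^l g(xz)\,dz$ produces, via the product rule, a sum of three terms involving $g(xz)$, $g'(xz)$ and $g''(xz)$ with powers of $x$ in front; subtracting $\frac{l(l+1)}{x^2}$ times $Y_{l,x}[g]$ cancels one of them. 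The aim is then to integrate by parts in $z$ to reorganize the $g'$ and $g''$ terms so that the whole expression collapses to $Y_{l,x}[g'']$ plus boundary corrections.

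The role of the subtraction of $f'(0)x$ in the left-hand side is precisely to handle these boundary corrections. Concretely, I expect that after integration by parts the obstruction to $\mathbf{L}_0 Y_{l,x}[f]=Y_{l,x}[f'']$ holding outright is a term proportional to $f'(0)$ (the boundary value at $z=0$, the point $s=0$), coming from the weight $(x^2-s^2)^l$ which at $s=0$ contributes a nonzero boundary term when $l$ is such that $(1-z^2)^l$ does not vanish there. So the strategy is: (i) prove the auxiliary identity $\mathbf{L}_0 Y_{l,x}[x] = 0$, i.e. that $Y_{l,x}$ applied to the linear function $f(x)=x$ produces a solution of \eqref{IntroBessel0} with $\lambda=0$ — indeed $Y_{l,x}[x]$ is a constant multiple of $x^{l+1}$, which one checks directly annihilates $\mathbf{L}_0$; (ii) prove that $\mathbf{L}_0 Y_{l,x}[g] = Y_{l,x}[g''] + c\,g'(0)\,x^{l}$ or similar for general $g$, tracking the single uncancelled boundary term; (iii) combine: writing $f(x)-f'(0)x$ has derivative vanishing to the needed order at $0$, so the correction term disappears, and linearity of $Y_{l,x}$ together with (i) gives $\mathbf{L}_0 Y_{l,x}[f(x)-f'(0)x] = Y_{l,x}[f''] - f'(0)\mathbf{L}_0 Y_{l,x}[x] = Y_{l,x}[f'']$, since $(f(x)-f'(0)x)'' = f''(x)$.

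I would carry out the integration by parts carefully: starting from $\int_0^1 (1-z^2)^l f''(xz)\,dz$, integrate by parts to lower the order of the derivative on $f$, picking up boundary terms at $z=1$ (where $(1-z^2)^l$ vanishes for $l>0$, but care is needed for $-\tfrac12\le l\le 0$) and at $z=0$. It is cleaner to work from the $x$-derivative side instead: compute $\partial_x^2\big(x^{l+1}\int_0^1(1-z^2)^l f(xz)\,dz\big)$ explicitly, recognize that the terms with $xf'(xz)$ and $x^2 f''(xz)$ can be rewritten using $\partial_z f(xz) = x f'(xz)$ and $\partial_z^2 f(xz) = x^2 f''(xz)$, and then integrate by parts in $z$ against $(1-z^2)^l$, using $\frac{d}{dz}(1-z^2)^l = -2lz(1-z^2)^{l-1}$. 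The identities $\int_0^1 z(1-z^2)^{l-1}\,dz = \tfrac{1}{2l}$ and the behavior of $(1-z^2)^{l}$, $(1-z^2)^{l+1}$ at the endpoints will produce the claimed cancellations.

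The main obstacle will be bookkeeping of the boundary terms, especially at $z=1$ when $l$ lies in the range $(-\tfrac12,0)$, where $(1-z^2)^l$ blows up but remains integrable while $(1-z^2)^{l+1}\to 0$; I must confirm that every integration by parts I perform leaves behind only terms with a strictly positive power of $(1-z^2)$ at $z=1$ (so they vanish) or else pairs up with a compensating term. A secondary subtlety is justifying differentiation under the integral sign near $x=0$, where the prefactor $x^{l+1}$ and the integral each behave well but their product and derivatives need the $C^2$ hypothesis on $f$ (Taylor expanding $f(xz) = f(0) + xzf'(0) + O(x^2z^2)$ shows $Y_{l,x}f(x) = O(x^{l+1})$ is twice differentiable up to $x=0$ in the required sense). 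Once these technical points are secured, the algebra forcing the collapse to $Y_{l,x}[f'']$ is routine.
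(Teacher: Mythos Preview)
Your plan is correct and matches the paper's proof almost exactly: both work in the scaled form $Y_{l,x}f(x)=\frac{x^{l+1}}{a}\int_0^1(1-z^2)^l f(xz)\,dz$, differentiate under the integral, observe that the $\frac{l(l+1)}{x^2}$ term cancels the lowest-order piece, and then reduce the remaining difference to the single obstruction term $-\frac{x^l}{a}f'(0)\bigl[2(l+1)\int_0^1(1-z^2)^l z\,dz-1\bigr]$, which vanishes by the elementary beta integral. The paper carries this out directly on $g=f-f'(0)x$ rather than through your three-step decomposition (i)--(iii), but the algebra is identical; your worry about the $z=1$ boundary for $-\tfrac12\le l<0$ is resolved automatically because the single integration by parts needed produces the factor $(1-z^2)^{l+1}$, which vanishes at $z=1$ for every $l\ge -\tfrac12$.
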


\begin{Proof}
The right hand side of (\ref{Eq.T}) has the form
\[
Y_{l,x}\left[  f^{\prime\prime}\left(  x\right)  \right]  =\frac{x^{l+1}}%
{a}\int_{0}^{1}{\left(  1-z^{2}\right)  ^{l}f^{\prime\prime}\left(  xz\right)
}dz
\]
where $a=2^{l+\frac{1}{2}}\Gamma{\left(  l+\frac{3}{2}\right)  }$. For the
expression from the left hand side of (\ref{Eq.T}) we have
\[
\mathbf{L}_{0}Y_{l,x}\left[  f\left(  x\right)  -f^{\prime}\left(  0\right)
x\right]  =\frac{2\left(  l+1\right)  x^{l}}{a}\int_{0}^{1}{\left(
1-z^{2}\right)  ^{l}z\left[  f^{\prime}\left(  xz\right)  -f^{\prime}\left(
0\right)  \right]  }dz+\frac{x^{l+1}}{a}\int_{0}^{1}{\left(  1-z^{2}\right)
^{l}z^{2}f^{\prime\prime}\left(  xz\right)  }dz.
\]
Therefore,
\begin{align*}
\mathbf{L}_{0}Y_{l,x}\left[  f\left(  x\right)  -f^{\prime}\left(  0\right)
x\right]   & -Y_{l,x}\left[  f^{\prime\prime}\left(  x\right)  \right] \\
&  =\frac{2\left(  l+1\right)  x^{l}}{a}\int_{0}^{1}{\left(  1-z^{2}\right)
^{l}z\left[  f^{\prime}\left(  xz\right)  -f^{\prime}\left(  0\right)
\right]  }dz-\frac{x^{l+1}}{a}\int_{0}^{1}{\left(  1-z^{2}\right)
^{l+1}f^{\prime\prime}\left(  xz\right)  }dz\\
&  =-\frac{x^{l}}{a}f^{\prime}\left(  0\right)  \left[  2\left(  l+1\right)
\int_{0}^{1}{\left(  1-z^{2}\right)  ^{l}z}dz-1\right]  =0,
\end{align*}
where an
integration by parts was used as well as formula 3.251.1 from \cite{GradsRyz}.
\end{Proof}

\subsection{Properties of the operator $Y_{l,x}$}

\begin{Proposition}
\label{P1.1}
\begin{equation*}
Y_{l,x}\left[  x^{k}\right]  =C_{k}x^{k+l+1},\qquad k=0,1,2,\ldots
\end{equation*}
with
\begin{equation*}
C_{k}:=\frac{\Gamma{\left(  \frac{k+1}{2}\right)  }\Gamma{\left(  l+1\right)
}}{2^{l+\frac{3}{2}}\Gamma{\left(  l+\frac{3}{2}\right)  }\Gamma{\left(
l+\frac{k+3}{2}\right)  }}. 
\end{equation*}
\end{Proposition}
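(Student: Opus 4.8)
The plan is to compute $Y_{l,x}[x^k]$ directly from the integral representation and recognize the resulting integral as a Beta function. Starting from
\[
Y_{l,x}[x^k] = \frac{x^{-l}}{2^{l+1/2}\Gamma(l+3/2)}\int_0^x (x^2-s^2)^l s^k\,ds,
\]
I would substitute $s = xz$, which turns the integral into $x^{2l+1+k}\int_0^1 (1-z^2)^l z^k\,dz$, so that the power of $x$ immediately comes out as $x^{-l}\cdot x^{2l+1+k} = x^{k+l+1}$, matching the claimed form. It then remains only to evaluate the constant.

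The constant reduces to $C_k = \dfrac{1}{2^{l+1/2}\Gamma(l+3/2)}\displaystyle\int_0^1 (1-z^2)^l z^k\,dz$. A further substitution $u = z^2$ (so $dz = \tfrac12 u^{-1/2}\,du$) converts this to $\tfrac12\int_0^1 (1-u)^l u^{(k-1)/2}\,du = \tfrac12 B\!\left(\tfrac{k+1}{2},\,l+1\right)$, and applying $B(p,q) = \Gamma(p)\Gamma(q)/\Gamma(p+q)$ gives $\tfrac12\,\Gamma\!\left(\tfrac{k+1}{2}\right)\Gamma(l+1)\big/\Gamma\!\left(l+\tfrac{k+3}{2}\right)$. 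Combining with the prefactor $2^{-(l+1/2)}/\Gamma(l+3/2)$ produces exactly the stated $C_k$, the extra factor of $\tfrac12$ merging with $2^{-(l+1/2)}$ to yield $2^{-(l+3/2)}$.

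There is no real obstacle here; the only point requiring minor care is the convergence of the Beta integral at $u=0$ when $k=0$ (exponent $(k-1)/2 = -1/2 > -1$, so it is fine) and at $u=1$ when $l = -1/2$ (exponent $l = -1/2 > -1$, again fine), so the formula holds throughout the admissible range $l \ge -1/2$, $k\ge 0$. Alternatively, and perhaps more in the spirit of the surrounding development, one could instead invoke Theorem~\ref{Teorema 1} recursively: applying \eqref{Eq.T} with $f(x) = x^{k+2}/((k+1)(k+2))$ relates $C_{k+2}$ to $C_k$ via the eigenvalue-type identity, reducing everything to the two base cases $k=0$ and $k=1$, which are elementary integrals. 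I would present the direct Beta-function computation as the cleaner route.
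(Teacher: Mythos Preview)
Your proof is correct and essentially the same as the paper's: both reduce the integral to a Beta function via a simple substitution. The paper suggests $s = x\sin\varphi$ and then invokes formula 3.642.1 from Gradshteyn--Ryzhik (the standard $\int_0^{\pi/2}\sin^{m}\varphi\cos^{n}\varphi\,d\varphi$ identity), while you use $s=xz$ followed by $u=z^2$; these are interchangeable routes to the same Beta integral $\tfrac12 B\!\left(\tfrac{k+1}{2},\,l+1\right)$.
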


For the proof one can use the change of the variable $s=x\sin\varphi$ and
formula 3.642.1 from \cite{GradsRyz}.

\begin{Proposition}
For any $f\in C\left(  \left[  0,b\right]  \right)  $ the following equalities
are valid
\begin{align*}
&  \lim_{x\rightarrow0}{Y_{l,x}f\left(  x\right)  }=0,\qquad\forall
l\geq-\frac{1}{2},\\
&  \lim_{x\rightarrow0}{\left(  Y_{l,x}f\right)  ^{\prime}\left(  x\right)
}=0,\qquad\forall l>0,\\
&  \lim_{x\rightarrow0}{\left(  Y_{0,x}f\right)  ^{\prime}\left(  x\right)
}=\sqrt{\frac{2}{\pi}}f\left(  0\right)  .
\end{align*}

\end{Proposition}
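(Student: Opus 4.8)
The plan is to analyze the behavior of $Y_{l,x}f$ and its derivative as $x\to 0$ directly from the integral representation
\[
Y_{l,x}f(x)=\frac{x^{l+1}}{a}\int_{0}^{1}(1-z^{2})^{l}f(xz)\,dz,\qquad a=2^{l+\frac12}\Gamma\!\left(l+\tfrac32\right).
\]
For the first limit, I would bound the integral: since $f$ is continuous on $[0,b]$ it is bounded by some $M$, and $\int_{0}^{1}(1-z^{2})^{l}\,dz$ converges (this is a Beta integral, finite for $l>-1$, in particular for $l\ge-\tfrac12$), so $|Y_{l,x}f(x)|\le \frac{M}{a}x^{l+1}\int_{0}^{1}(1-z^{2})^{l}dz\to 0$ as $x\to0^{+}$ because $l+1>0$. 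This gives $\lim_{x\to0}Y_{l,x}f(x)=0$ for all $l\ge-\tfrac12$.

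For the derivative, I would differentiate under the integral sign (justified since $f\in C$ gives $\frac{d}{dx}f(xz)$ only in a weak sense, so instead I would first write, after the substitution $s=xz$ undone, $Y_{l,x}f(x)=\frac{x^{-l}}{a}\int_{0}^{x}(x^{2}-s^{2})^{l}f(s)\,ds$ and differentiate this form). Differentiating the prefactor and the integral (the boundary term at $s=x$ vanishes because $(x^{2}-s^{2})^{l}\big|_{s=x}=0$ when $l>0$) yields
\[
(Y_{l,x}f)'(x)=\frac{-l\,x^{-l-1}}{a}\int_{0}^{x}(x^{2}-s^{2})^{l}f(s)\,ds+\frac{2l\,x^{-l+1}}{a}\int_{0}^{x}(x^{2}-s^{2})^{l-1}f(s)\,ds.
\]
Rescaling $s=xz$ in each term gives $(Y_{l,x}f)'(x)=\frac{x^{l}}{a}\int_{0}^{1}\big(2l(1-z^{2})^{l-1}-l(1-z^{2})^{l-1}\cdot(1-z^2)\cdot\!\big)\cdots$ — more cleanly, each term carries a positive power $x^{l}$, so bounding $f$ by $M$ and checking the $z$-integrals converge ($\int_{0}^{1}(1-z^{2})^{l-1}dz<\infty$ iff $l>0$) shows $(Y_{l,x}f)'(x)=O(x^{l})\to0$ for $l>0$.

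For the borderline case $l=0$, the formula becomes $(Y_{0,x}f)'(x)=\frac{1}{a}\int_{0}^{1}f(xz)\,dz\cdot\frac{d}{dx}x$ — more precisely $Y_{0,x}f(x)=\frac{x}{a}\int_{0}^{1}f(xz)\,dz=\frac1a\int_0^x f(s)\,ds$, whose derivative is exactly $\frac{1}{a}f(x)\to\frac{1}{a}f(0)$ by continuity; with $a=2^{1/2}\Gamma(3/2)=\sqrt{2}\cdot\frac{\sqrt\pi}{2}=\sqrt{\pi/2}$, this gives $\frac{1}{a}f(0)=\sqrt{2/\pi}\,f(0)$, as claimed. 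The main obstacle — really the only delicate point — is justifying the differentiation and the vanishing of the boundary term for $l>0$ uniformly, and handling the fact that for $-\tfrac12\le l<0$ the naive differentiated integral $\int_0^x(x^2-s^2)^{l-1}f(s)\,ds$ diverges, which is precisely why the second limit is stated only for $l>0$; I would make the differentiation rigorous by an approximation argument (approximating $f$ by $C^{1}$ functions, or integrating the derivative back and using dominated convergence) rather than by a bare Leibniz rule.
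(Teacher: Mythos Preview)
Your argument is correct. The paper does not spell out a full proof but only says it ``is elementary and is based on the mean value theorem, L'Hospital's rule and formula 3.251.1 from \cite{GradsRyz}'' (the Beta integral $\int_0^1(1-z^2)^{l}\,dz$). Your route is slightly different in packaging: instead of extracting $f$ from the integral via the mean value theorem and then handling the resulting power of $x$ (possibly with L'Hospital), you rescale $s=xz$ and bound $|f|\le M$ directly, reducing everything to the convergence of the Beta integrals and the factor $x^{l+1}$ (resp.\ $x^{l}$). Your treatment of $l=0$ via the Fundamental Theorem of Calculus is the cleanest possible and matches the paper's constant $a=\sqrt{\pi/2}$.

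One point deserves a word of care: for $0<l<1$ the Leibniz differentiation of $\int_0^x(x^2-s^2)^l f(s)\,ds$ produces the singular integrand $(x^2-s^2)^{l-1}$, so the standard Leibniz rule with a continuous partial derivative does not apply directly. You flag this yourself, and an easy fix is to observe that after rescaling you have the explicit formula
\[
(Y_{l,x}f)'(x)=\frac{x^{l}}{a}\int_0^1\bigl[(l+1)(1-z^2)^{l}+2lz^2(1-z^2)^{l-1}\bigr]f(xz)\,dz,
\]
whose right-hand side is manifestly continuous in $x$ for $l>0$ (the $z$-integral converges uniformly in $x$ by dominated convergence); integrating it back from $0$ to $x$ and checking it reproduces $Y_{l,x}f(x)$ legitimizes the derivative without any approximation argument. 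This is marginally more direct than the mean-value/L'Hospital path hinted at in the paper and has the advantage of working verbatim for complex-valued $f$.
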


The proof is elementary and is based on the mean value theorem, L'Hospital's
rule and formula 3.251.1 from \cite{GradsRyz}.

The operator ${Y_{l,x}}$ is related to the Bessel equation
\begin{equation*}
-\mathbf{L}_{0}u=\lambda u 
\end{equation*}
where $\lambda$ is an arbitrary complex number. The equation possesses a
regular solution satisfying the asymptotic relations $u_{l}\sim x^{l+1}$ and
$u_{l}^{\prime}\sim\left(  l+1\right)  x^{l}$, $x\rightarrow0$ and given by
the formula
\[
u_{l}\left(  x,\lambda\right)  =\Gamma{\left(  l+\frac{3}{2}\right)
}2^{l+\frac{1}{2}}\lambda^{-\frac{2l+1}{4}}\sqrt{x}\mathbf{J}_{l+\frac{1}{2}%
}\left(  \sqrt{\lambda}x\right)  ,
\]
where $\mathbf{J}_{l+\frac{1}{2}}$ is the Bessel function of the first kind
and of order $l+\frac{1}{2}$, $l\geq-\frac{1}{2}$. The solution $u_{l}\left(
x,\lambda\right)  $ up to a multiplicative constant is the image of the
solution $\cos{\sqrt{\lambda}x}$ of the equation $v^{\prime\prime}+\lambda
v=0$ under the action of the operator $Y_{l,x}$. Indeed,
\[%
\begin{split}
Y_{l,x}\left[  \cos{\sqrt{\lambda}x}\right]   & = \frac{\sqrt{\pi x}%
\Gamma(l+1)}{\lambda^{\frac{2l+1}4}\Gamma(l+3/2)}\frac{\left( \frac
{\sqrt{\lambda} x}2\right) ^{l+1/2}}{\sqrt{\pi}\Gamma(l+1)}\int_{0}^{1}
(1-z^{2})^{l} \cos(\sqrt{\lambda} xz)\,dz\\
& = \frac{\sqrt{\pi}\Gamma{\left(  l+1\right)  }}{2\Gamma{\left(  l+\frac
{3}{2}\right)  }}\sqrt{x}\lambda^{-\frac{2l+1}{4}}\mathbf{J}_{l+\frac{1}{2}
}\left(  \sqrt{\lambda}x\right) ,
\end{split}
\]
where formula \cite[(5.10.2)]{Lebedev} was used.

Thus,
\[
\frac{2^{l+\frac{3}{2}}\Gamma^{2}{\left(  l+\frac{3}{2}\right)  }}{\sqrt{\pi
}\Gamma{\left(  l+1\right)  }}Y_{l,x}\left[  \cos{\sqrt{\lambda}x}\right]
=u_{l}\left(  x,\lambda\right)  .
\]
For the second linearly independent solution of $v^{\prime\prime}+\lambda v=0
$ we obtain the relation
\[
Y_{l,x}\left[  \sin{\left(  \sqrt{\lambda}x\right)  }\right]  =\frac{\sqrt
{\pi}\Gamma{\left(  l+1\right)  }}{2\Gamma{\left(  l+\frac{3}{2}\right)  }%
}\lambda^{-\frac{2l+1}{4}}\sqrt{x}H_{l+\frac{1}{2}}\left(  \sqrt{\lambda
}x\right)  ,
\]
where $H_{l+\frac{1}{2}}$ stands for the Struve function of order $l+\frac
{1}{2}$. Using Theorem \ref{Teorema 1} it is easy to verify that
$h(x,\lambda):=Y_{l,x}\left[  \sin{\left(  \sqrt{\lambda}x\right)  }\right]  $
is a solution of the inhomogeneous Bessel equation
\[
\left(  \mathbf{L}_{0}+\lambda\right)  h(x,\lambda)=2\left(  l+1\right)
\sqrt{\lambda}C_{1}x^{l}.
\]

\begin{Proposition}
The operators $Y_{l,x}:C\left(  \left[  0,b\right]  \right)  \rightarrow
C\left(  \left[  0,b\right]  \right)  $ and $\frac{d}{dx}Y_{l,x}:C^{1}\left(
\left[  0,b\right]  \right)  \rightarrow C\left(  \left[  0,b\right]  \right)
$ are continuous.
\end{Proposition}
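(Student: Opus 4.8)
\noindent The plan, in both cases, rests on linearity: continuity between the relevant Banach spaces (here $C([0,b])$ with the sup-norm and $C^{1}([0,b])$ with the $C^{1}$-norm) is equivalent to an a priori bound $\|\,\cdot\,\|\le c\,\|\,\cdot\,\|$, once one has also checked that the image actually lies in $C([0,b])$. Throughout I would use the rescaled representation
\[
Y_{l,x}f(x)=\frac{x^{l+1}}{a}\int_{0}^{1}(1-z^{2})^{l}f(xz)\,dz,\qquad a:=2^{l+\frac12}\Gamma\!\left(l+\tfrac32\right),
\]
and abbreviate $\beta_{l}:=\int_{0}^{1}(1-z^{2})^{l}\,dz$, which is finite because $l>-1$.

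For $Y_{l,x}:C([0,b])\to C([0,b])$ this is straightforward. By dominated convergence the map $x\mapsto\int_{0}^{1}(1-z^{2})^{l}f(xz)\,dz$ is continuous on $[0,b]$ (the integrand is continuous in $x$ and dominated by the integrable function $\|f\|_{\infty}(1-z^{2})^{l}$); multiplying by the continuous factor $x^{l+1}/a$ then shows $Y_{l,x}f\in C([0,b])$, with $Y_{l,x}f(x)\to0$ as $x\to0$, consistently with the second Proposition above. For the bound, $|Y_{l,x}f(x)|\le\frac{x^{l+1}}{a}\beta_{l}\|f\|_{\infty}\le C_{0}\,b^{l+1}\|f\|_{\infty}$, where $C_{0}$ is precisely the constant $C_{k}$ of Proposition~\ref{P1.1} with $k=0$ (indeed $\beta_{l}/a=C_{0}$, so $Y_{l,x}[1]=C_{0}x^{l+1}$). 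Hence $Y_{l,x}$ is bounded, and therefore continuous.

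For $\frac{d}{dx}Y_{l,x}:C^{1}([0,b])\to C([0,b])$ I would begin by differentiating under the integral sign, which is legitimate for $f\in C^{1}$ since the differentiated integrand is dominated by the integrable function $\|f'\|_{\infty}(1-z^{2})^{l}z$; this gives, for $x\in(0,b]$,
\[
\frac{d}{dx}Y_{l,x}f(x)=\frac{(l+1)x^{l}}{a}\int_{0}^{1}(1-z^{2})^{l}f(xz)\,dz+\frac{x^{l+1}}{a}\int_{0}^{1}(1-z^{2})^{l}z\,f'(xz)\,dz .
\]
When $l\ge0$ both summands are, by the same parameter-integral argument, continuous on $[0,b]$, and each is bounded by a constant times $\|f\|_{\infty}$, resp. $\|f'\|_{\infty}$ (using $0\le x^{l}\le b^{l}$ and $\int_{0}^{1}(1-z^{2})^{l}z\,dz=\tfrac1{2(l+1)}$), which yields $\bigl\|\frac{d}{dx}Y_{l,x}f\bigr\|_{\infty}\le c\,\|f\|_{C^{1}}$ and continuity. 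As a consistency check, for $l=0$ an integration by parts in $z$ (using $xf'(xz)=\frac{d}{dz}f(xz)$) collapses the whole expression to $\sqrt{2/\pi}\,f(x)$, in agreement with the Proposition preceding this one.

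The step I expect to be the real obstacle is the range $-\tfrac12\le l<0$ near the origin: there the factor $x^{l}$ in the first summand is unbounded, whereas $\int_{0}^{1}(1-z^{2})^{l}f(xz)\,dz\to f(0)\beta_{l}$, so a crude estimate does not suffice and the summand must be treated delicately at $x=0$. The natural way out is to use more than continuity of $f$: since $f\in C^{1}$, $\bigl|\int_{0}^{1}(1-z^{2})^{l}\bigl(f(xz)-f(0)\bigr)\,dz\bigr|\le\frac{\|f'\|_{\infty}}{2(l+1)}\,x$, so after splitting off the constant value $f(0)$ the "remainder" summand is $O(x^{l+1})$ and all the earlier estimates go through uniformly, leaving only the contribution of $f(0)$ to be matched against the behaviour at the origin recorded above. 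I expect this origin analysis to absorb essentially all of the work; the rest is bookkeeping with Beta-function constants.
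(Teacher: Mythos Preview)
The paper offers no argument beyond ``The proof is straightforward,'' so there is nothing detailed against which to compare your method; your treatment of $Y_{l,x}$ itself and of $\frac{d}{dx}Y_{l,x}$ for $l\ge 0$ is correct and is presumably what the authors had in mind.

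Your instinct that $-\tfrac12\le l<0$ is the real obstacle is right, but the proposed fix cannot succeed, because the assertion as written is simply false in that range. Take $f\equiv 1\in C^{1}([0,b])$: by Proposition~\ref{P1.1}, $Y_{l,x}[1]=C_{0}\,x^{l+1}$, so $\frac{d}{dx}Y_{l,x}[1]=C_{0}(l+1)\,x^{l}$, which is unbounded near $x=0$ when $l<0$ and hence not in $C([0,b])$. Your splitting $f(xz)=f(0)+(f(xz)-f(0))$ does tame the remainder, but the leading piece $\frac{(l+1)f(0)\beta_{l}}{a}\,x^{l}$ is exactly this divergent contribution, and there is nothing in the second summand (which is $O(x^{l+1})$) to cancel it. Note that the paper's preceding Proposition records $\lim_{x\to 0}(Y_{l,x}f)'(x)$ only for $l>0$ and $l=0$; its silence for $l<0$ is consistent with this. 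So either the statement carries an implicit restriction $l\ge 0$, or for negative $l$ the target space must be weakened (e.g.\ to $C((0,b])$, or to functions bounded after multiplication by $x^{-l}$). Restricted to $l\ge 0$, your argument is complete.
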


The proof is straightforward.

\subsection{The inverse operator}

To find the inverse operator we represent our operator as an Erdélyi-Kober
operator, which are modifications of Riemann-Liouville fractional integrals
and derivatives \cite[§18]{SamKilMar}, and use their well developed theory.
Consider the following Erdélyi-Kober operator
\begin{align}
I_{a;\sigma,\eta}^{\alpha}{f\left(  x\right)  }  &  =\frac{\sigma
x^{-\sigma\left(  \alpha+\eta\right)  }}{\Gamma{\left(  \alpha\right)  }}%
\int_{a}^{x}{\left(  x^{\sigma}-t^{\sigma}\right)  ^{\alpha-1}t^{\sigma
\eta+\sigma-1}f\left(  t\right)  }dt,\qquad\alpha>0,\label{Eqtd}\\
I_{a;\sigma,\eta}^{\alpha}{f\left(  x\right)  }  &  =x^{-\sigma\left(
\alpha+\eta\right)  }\left(  \frac{d}{\sigma x^{\sigma-1}dx}\right)
^{n}x^{\sigma\left(  \alpha+n+\eta\right)  }I_{a;\sigma,\eta}^{\alpha
+n}f\left(  x\right)  ,\qquad\alpha>-n\label{Eqtd1}%
\end{align}
where $0\leq a<x<\infty$ for any $\sigma$ real or $-\infty\leq a<x<\infty$ for
$\sigma$ integer. Note that the value on $n$ in \eqref{Eqtd1} can be arbitrary
satisfying $n>-\alpha$.

The expression for the inverse operator is given by \cite[§18]{SamKilMar}
\begin{equation}
\left(  I_{a;\sigma,\eta}^{\alpha}\right)  ^{-1}=I_{a;\sigma,\eta+\alpha
}^{-\alpha} \label{Eq1.34}%
\end{equation}

\begin{Remark}
The operator $Y_{l,x}$ can be represented in terms of an Erdélyi-Kober
operator as follows
\begin{equation*}
Y_{l,x}{f\left(  x\right)  }=\frac{\Gamma{\left(  l+1\right)  }x^{l+1}%
}{2^{l+\frac{3}{2}}\Gamma{\left(  l+\frac{3}{2}\right)  }}I_{0;2,-\frac{1}{2}%
}^{l+1}{f\left(  x\right)  .} 
\end{equation*}
\end{Remark}
From this and from (\ref{Eq1.34}) we obtain the inverse of $Y_{l,x}$.

\begin{Proposition}
Let $n$ be arbitrary integer satisfying $n>l+1$. Then the inverse operator of
$Y_{l,x}$ can be taken in the form
\begin{equation*}
Y_{l,x}^{-1}{f\left(  x\right)  }=\frac{2^{l+\frac{5}{2}}\Gamma{\left(
l+\frac{3}{2}\right)  }x}{\Gamma{\left(  l+1\right)  }\Gamma\left(  {n-\left(
l+1\right)  }\right)  }\left(  \frac{d}{2xdx}\right)  ^{n}\int_{0}^{x}{\left(
x^{2}-s^{2}\right)  ^{-\left(  l+2\right)  +n}s^{l+1}f\left(  s\right)  }ds.
\end{equation*}
\end{Proposition}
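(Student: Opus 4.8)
The plan is to reduce everything to the Erdélyi--Kober calculus already set up in this subsection, so that the proof is a direct (if slightly tedious) verification rather than a new argument. Put $c:=\frac{\Gamma(l+1)}{2^{l+3/2}\Gamma(l+3/2)}$; by the Remark above, $Y_{l,x}$ is the composition ``apply $I_{0;2,-1/2}^{l+1}$, then multiply by $c\,x^{l+1}$''. Since the inverse of a composition reverses the order and $I_{0;2,-1/2}^{l+1}$ is linear, one obtains
\[
Y_{l,x}^{-1}f(x)=\frac1c\,\bigl(I_{0;2,-1/2}^{l+1}\bigr)^{-1}\!\left[x^{-(l+1)}f(x)\right],
\]
and by \eqref{Eq1.34} with $\sigma=2$, $\eta=-\tfrac12$, $\alpha=l+1$ the inverse Erdélyi--Kober operator is $I_{0;2,\,l+1/2}^{-(l+1)}$. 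Thus $Y_{l,x}^{-1}f(x)=\tfrac1c\,I_{0;2,\,l+1/2}^{-(l+1)}\bigl[x^{-(l+1)}f(x)\bigr]$.

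Because this operator has negative order, I would pass to the differential--integral representation \eqref{Eqtd1}, choosing an arbitrary integer $n>l+1$; the note following \eqref{Eqtd1} guarantees that the outcome does not depend on this choice, which is also what accounts for the free parameter $n$ in the statement. Feeding $\sigma=2$, $\eta=l+\tfrac12$, $\alpha=-(l+1)$ into \eqref{Eqtd1}, the two exponents collapse to $\sigma(\alpha+\eta)=-1$ and $\sigma(\alpha+n+\eta)=2n-1$, giving
\[
I_{0;2,\,l+1/2}^{-(l+1)}g(x)=x\left(\frac{d}{2x\,dx}\right)^{n}x^{2n-1}\,I_{0;2,\,l+1/2}^{\,n-(l+1)}g(x).
\]
Now $I_{0;2,\,l+1/2}^{\,n-(l+1)}$ has positive order $n-(l+1)>0$, so \eqref{Eqtd} applies; evaluated with $\sigma=2$, $\eta=l+\tfrac12$ it produces the kernel $(x^2-t^2)^{-(l+2)+n}$, the weight $t^{2l+2}$ and the scalar prefactor $2x^{1-2n}/\Gamma\bigl(n-(l+1)\bigr)$.

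What remains is pure bookkeeping: substitute $g(t)=t^{-(l+1)}f(t)$, so that the weight $t^{2l+2}$ combines with $t^{-(l+1)}$ to give $t^{l+1}$; the factor $x^{2n-1}$ cancels the $x^{1-2n}$; and the constant simplifies as $\tfrac1c\cdot\tfrac{2}{\Gamma(n-(l+1))}=\dfrac{2^{l+5/2}\Gamma(l+3/2)}{\Gamma(l+1)\,\Gamma(n-(l+1))}$, which is exactly the coefficient asserted. I expect no genuine obstacle in this chain; the one point deserving a line of justification is that \eqref{Eqtd1} and \eqref{Eq1.34} are being invoked on the function $x^{-(l+1)}f(x)$, so one should record that this lies in the domain on which the Erdélyi--Kober theory of \cite[\S 18]{SamKilMar} gives both the invertibility of $I_{0;2,-1/2}^{l+1}$ and the validity of \eqref{Eqtd1} --- equivalently, that $f$ ranges over the image of $Y_{l,x}$, which by Proposition \ref{P1.1} together with the smoothing action of the integral in \eqref{Eq1.8} consists of sufficiently regular functions vanishing suitably at $x=0$.
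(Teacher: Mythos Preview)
Your proposal is correct and follows exactly the route the paper indicates: represent $Y_{l,x}$ via the Erd\'elyi--Kober operator as in the Remark, invoke \eqref{Eq1.34} to identify the inverse as $I_{0;2,\,l+1/2}^{-(l+1)}$, and then unpack this negative-order operator through \eqref{Eqtd1} and \eqref{Eqtd}. The paper merely gestures at this derivation in the sentence preceding the Proposition; you have supplied the bookkeeping in full, and the computations check out.
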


\section{Transmutation for perturbed Bessel equations}

\subsection{Existence of the transmutation}

Consider the perturbed Bessel equation
\begin{equation}
-u^{\prime\prime}\left(  x\right)  +\left(  \frac{l\left(  l+1\right)  }%
{x^{2}}+q\left(  x\right)  \right)  u\left(  x\right)  =\lambda u\left(
x\right)  ,\qquad l\geq-\frac{1}{2},\quad x\in\left(  0,b\right]
\label{Bessel}%
\end{equation}
where $q$ is a complex valued continuous function on $\left[  0,b\right]  $
and $\lambda$ is an arbitrary complex number.

When $q\equiv0$ the function
\begin{equation}
\mathbf{d}_{l}\left(  x,\lambda\right)  :=\sqrt{x}\mathbf{J}_{l+\frac{1}{2}%
}\left(  \sqrt{\lambda}x\right)  =\sqrt{x}\sum_{n=0}^{\infty}{\frac{\left(
-1\right)  ^{n}}{n!\Gamma{\left(  n+l+\frac{3}{2}\right)  }}\left(
\frac{\sqrt{\lambda}x}{2}\right)  ^{2n+l+\frac{1}{2}}} \label{dl}%
\end{equation}
is a regular solution of (\ref{Bessel}).

In \cite{Volk} it was proved that a regular solution of (\ref{Bessel}) is the
image of $\mathbf{d}_{l}\left(  x,\lambda\right)  $ under the action of the
integral operator defined by
\begin{equation}
\label{T}\mathbf{T}\left[  \varphi\right]  \left(  x\right)  =\varphi\left(
x\right)  +\int^{x}_{0}{K\left(  x,t\right)  \varphi\left(  t\right)  }dt
\end{equation}
for any $\varphi\in C\left(  \left[  0,b\right]  \right)  $, and with the
kernel $K$ described by the following theorem

\begin{Theorem}
[\cite{Volk}]\label{Th Kernel K} There exists and only one function $K\left(
x,t\right)  $ continuous in the triangle $0\leq t\leq x\leq b$ such that the
following properties are satisfied

\begin{enumerate}
\item $K$ is a solution of the equation
\begin{equation}
\left(  \square-\frac{l\left(  l+1\right)  }{x^{2}}+\frac{l\left(  l+1\right)
}{t^{2}}-q\left(  x\right)  \right)  u\left(  x,t\right)  =0 \label{waveq}%
\end{equation}
in the domain $0<t\leq x\leq b$.

\item $\frac{dK\left(  x,x\right)  }{dx}=\frac{q\left(  x\right)  }{2},\qquad
x\in\left[  0,b\right]  $.

\item
\begin{equation}
K\left(  x,0\right)  =0,\qquad x\in\left[  0,b\right]  \label{K(x,0)=0}%
\end{equation}
and additionally
\begin{equation}
\lim_{t\rightarrow0}{K\left(  x,t\right)  \cdot t^{l}}=0,\qquad\text{when
}-\frac{1}{2}\leq l<0. \label{limK}%
\end{equation}

\end{enumerate}
\end{Theorem}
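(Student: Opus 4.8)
The plan is to introduce characteristic coordinates, rewrite the Goursat problem as an equivalent Volterra integral equation, and solve the latter by successive approximations, the delicate point being the singular behaviour of the coefficients near the line $t=0$.

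First I would set $\xi=x+t$, $\eta=x-t$ and $\mathcal{K}(\xi,\eta):=K(x,t)$. This maps the triangle $0\le t\le x\le b$ onto $\Delta:=\{(\xi,\eta):0\le\eta\le\xi,\ \xi+\eta\le 2b\}$, turns $\square$ into $4\partial_\xi\partial_\eta$, and transforms \eqref{waveq} into $4\,\mathcal{K}_{\xi\eta}=p(\xi,\eta)\,\mathcal{K}$, where
\[
p(\xi,\eta)=q\!\left(\tfrac{\xi+\eta}{2}\right)+\frac{4l(l+1)}{(\xi+\eta)^{2}}-\frac{4l(l+1)}{(\xi-\eta)^{2}} .
\]
The boundary segment $t=x$ becomes the characteristic $\eta=0$, on which the two singular terms of $p$ cancel identically; property 2, together with the fact that $K(0,0)=\lim_{x\to0}K(x,0)=0$ forced by \eqref{K(x,0)=0}, integrates to the Dirichlet datum $\mathcal{K}(\xi,0)=\tfrac12\int_0^{\xi/2}q(s)\,ds$. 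The segment $t=0$ becomes the diagonal $\eta=\xi$, on which \eqref{K(x,0)=0} gives $\mathcal{K}(\xi,\xi)=0$, and the refinement \eqref{limK} is exactly what will make the integrals below converge when $-\tfrac12\le l<0$.

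Integrating $4\,\mathcal{K}_{\xi\eta}=p\,\mathcal{K}$ first in the second variable from $0$ (using $\mathcal{K}_\xi(\xi,0)=\tfrac14 q(\xi/2)$) and then in the first variable from $\eta$ (using $\mathcal{K}(\eta,\eta)=0$) produces the integral equation
\[
\mathcal{K}(\xi,\eta)=\frac14\int_\eta^\xi q\!\left(\tfrac{\tau}{2}\right)d\tau+\frac14\int_\eta^\xi\!\!\int_0^\eta p(\tau,s)\,\mathcal{K}(\tau,s)\,ds\,d\tau ,
\]
which is equivalent to the Goursat problem: conversely, a continuous solution of this equation is automatically of class $C^2$ in the open triangle (there $p$, hence $p\mathcal{K}$, is continuous and hyperbolic regularity applies), a classical solution of \eqref{waveq} up to the characteristic $\eta=0$ as well, and differentiating the equation on $\eta=0$ and evaluating it on $\eta=\xi$ recovers properties 2 and 3. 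One then solves the integral equation by the iteration $\mathcal{K}_0=\tfrac14\int_\eta^\xi q(\tau/2)\,d\tau$, $\mathcal{K}_{n+1}=\mathcal{K}_0+\tfrac14\int_\eta^\xi\int_0^\eta p\,\mathcal{K}_n$, proving uniform convergence on $\Delta$ by a majorant-series bound, and uniqueness follows by applying the same bound to the difference of two solutions, which satisfies the homogeneous equation.

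The main obstacle is carrying out these estimates in the presence of the singular term $-4l(l+1)/(\xi-\eta)^2$ of $p$, which blows up as $\xi-\eta\to0$, i.e. as $t\to0$, so the naive bound $\sup_\Delta|p|$ is unavailable. I would run the iteration in a space of continuous functions on $\Delta$ equipped with a weight encoding the vanishing of $\mathcal{K}$ on the diagonal $\eta=\xi$ (compatible with $\mathcal{K}(\xi,\xi)=0$ and with \eqref{limK}), propagating an a priori estimate of the form $|\mathcal{K}(\xi,\eta)|\le C\,\omega(\xi)(\xi-\eta)^{\gamma}$ with $\gamma$ chosen so that the resulting iterated integrals converge and the series of majorants still sums; equivalently, one may first work in the truncated region $t\ge\delta>0$, obtain bounds uniform in $\delta$, and let $\delta\to0$. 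That the scheme can be started at all is due to the cancellation of the singular part of $p$ on the characteristic $t=x$, which is precisely where the Goursat data sit; controlling the accumulation of this singularity as the integration proceeds toward $t=0$ is the technical heart of the argument.
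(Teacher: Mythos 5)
Your reduction to characteristic coordinates $\xi=x+t$, $\eta=x-t$ and to a Volterra-type integral equation is set up correctly (the Goursat datum $\mathcal{K}(\xi,0)=\frac14\int_0^\xi q(\tau/2)\,d\tau$ from property 2, vanishing on $\eta=\xi$ from \eqref{K(x,0)=0}), but the step you defer to a ``weighted space'' is not a technicality: it is the entire content of the theorem, and the naive version of your scheme fails. The singular part $-4l(l+1)/(\xi-\eta)^{2}$ of $p$ is scaling-critical, so each application of your integral operator degrades the vanishing rate on the diagonal: starting from $\mathcal{K}_0=O(\xi-\eta)$ one gets
\begin{equation*}
\int_\eta^\xi\!\!\int_0^\eta\frac{\tau-s}{(\tau-s)^{2}}\,ds\,d\tau=\int_\eta^\xi\log\frac{\tau}{\tau-\eta}\,d\tau=O\Bigl((\xi-\eta)\log\tfrac{1}{\xi-\eta}\Bigr),
\end{equation*}
and iterating, the $n$-th majorant carries a factor $\bigl(A\log\frac{1}{\xi-\eta}\bigr)^{n}/n!$ with $A$ proportional to $|l(l+1)|$; the series sums to $(\xi-\eta)^{1-A}$, which for $l$ large does not even give boundedness, let alone the vanishing required by \eqref{K(x,0)=0}. ``Choosing $\gamma$ so that the iterated integrals converge'' does not address this, since convergence of each single integral is not the obstruction. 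The uniqueness half has the same problem in sharper form: for $l>0$ the theorem asserts uniqueness among functions merely continuous with $K(x,0)=0$, and for such a function $p\,\mathcal{K}=o(1)\cdot(\xi-\eta)^{-2}$ need not be locally integrable near the corner $\tau=s=\eta$ (the area element contributes only one power of $\tau-s$), so the difference of two solutions cannot be asserted to satisfy your homogeneous integral equation without first proving an a priori vanishing rate at $t=0$ --- itself a nontrivial part of the statement.

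This is precisely why \cite{Volk} (and this paper, which reproduces and corrects his machinery in Section 5 and the Appendix) does not work in the coordinates $\xi=x+t$, $\eta=x-t$. One instead sets $z=\frac14(x+t)^{2}$, $s=\frac14(x-t)^{2}$ and peels off the exact leading singularity by writing $K=(z-s)^{-l}v$, which turns \eqref{waveq} into the Euler--Poisson--Darboux-type equation \eqref{3.22} with only first-order singular coefficients $\pm l/(z-s)$. For that equation the Riemann function is known explicitly in terms of ${}_{2}\mathbf{F}_{1}$, and the successive approximations are run on $v$ using the kernel estimates of Lemma \ref{Estimates Riemann} and Corollary \ref{Estimates Riemann 2}, which yield $v=O\bigl((z-s)^{l+1/2}\bigr)$ and hence both existence and the boundary behaviour \eqref{K(x,0)=0}, \eqref{limK}. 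The fact that the original estimates in \cite{Volk} themselves contained an error (as the Appendix points out) underlines that the convergence analysis you postpone is the substantive heart of the proof, and as proposed your route does not get through it.
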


\begin{Remark}
The property 3. is slightly modified as compared to the original formulation
of this theorem in \cite{Volk}, where only (\ref{limK}) appears. However the
fact that $K\left(  x,0\right)  =0$ for any $l\geq-\frac{1}{2}$ can be
established easily using the results from \cite{Volk}. Indeed, according to
\cite{Volk} (with the corrections similar to those presented in the Appendix
\ref{Appendix}) the function $K$ admits the representation $K\left(
x,t\right)  =\left(  z-s\right)  ^{-l}v\left(  z,s\right)  $ where $z=\frac
{1}{4}\left(  x+t\right)  ^{2},$ $s=\frac{1}{4}\left(  x-t\right)  ^{2}$ and
$v\left(  z,s\right)  =O\left(  \left(  z-s\right)  ^{1/2+l-\varepsilon
}\right)  ,$ $0<\varepsilon<1/2$. From where we obtain (\ref{K(x,0)=0}) by
letting $t\rightarrow0$.
\end{Remark}

The main result of \cite{Volk} establishes that the function
\[
u_{l}\left(  x,\lambda\right)  =\mathbf{T}\left[  \mathbf{d}_{l}\left(
x,\lambda\right)  \right]
\]
is a regular solution of (\ref{Bessel}) with $\mathbf{T}$ having the form
(\ref{T}) with $K$ being defined by Theorem \ref{Th Kernel K}.

Note also that the operator $\mathbf{T}:C([0,b])\to C([0,b])$ is a bijection.

\subsection{The transmutation property}

Denote as $C_{0}\left(  \left[  0,b\right]  \right)  $, $C_{0}^{1}\left(
\left[  0,b\right]  \right)  $ and {$\mathcal{A}_{l}$} the following
functional spaces
\[
C_{0}\left(  \left[  0,b\right]  \right)  :=\left\{  \varphi\in C\left(
\left[  0,b\right]  \right)  :\varphi\left(  0\right)  =0\right\}  ,\qquad
C_{0}^{1}\left(  \left[  0,b\right]  \right)  :=\left\{  \varphi\in
C^{1}\left(  \left[  0,b\right]  \right)  :\varphi\left(  0\right)
=\varphi^{\prime}\left(  0\right)  =0\right\}
\]
and
\[
\mathcal{A}_{l}=C_{0}\left(  \left[  0,b\right]  \right)  \cap C^{2}\left(
\left(  0,b\right]  \right)  \cap\left\{  \varphi:\varphi^{\prime}\in C\left[
0,b\right]  ,\text{ if }l>0,\text{ and }\varphi^{\prime}\left(  t\right)
=O\left(  t^{l}\right) \text{ when }t\rightarrow0,\text{ if }l<0\right\} .
\]

\begin{Theorem}
\label{Th Transmutation property}For any $\varphi\in\mathcal{A}_{l}$ the
following equality is valid
\begin{equation*}
\mathbf{L}\mathbf{T}\varphi=\mathbf{TL}_{0}\varphi. 
\end{equation*}
\end{Theorem}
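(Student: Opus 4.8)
The plan is to verify the intertwining relation $\mathbf{L}\mathbf{T}\varphi = \mathbf{T}\mathbf{L}_0\varphi$ by exploiting the PDE satisfied by $K$ together with the Goursat conditions from Theorem~\ref{Th Kernel K}. Write $\mathbf{T}\varphi(x) = \varphi(x) + \int_0^x K(x,t)\varphi(t)\,dt$. Applying $\mathbf{L} = \frac{d^2}{dx^2} - \frac{l(l+1)}{x^2} - q(x)$ and differentiating the integral (using Leibniz's rule twice, which is justified since $\varphi\in C^2((0,b])$ and $K$ is continuous with the regularity needed away from $t=0$), one gets boundary terms from $t=x$ involving $K(x,x)$, $\frac{d}{dx}K(x,x)$, $\partial_x K(x,t)|_{t=x}$, and $\partial_t K(x,t)|_{t=x}$, plus a volume term $\int_0^x (\partial_x^2 - \frac{l(l+1)}{x^2} - q(x)) K(x,t)\,\varphi(t)\,dt$. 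On the other side, $\mathbf{T}\mathbf{L}_0\varphi(x) = \mathbf{L}_0\varphi(x) + \int_0^x K(x,t)\bigl(\varphi''(t) - \frac{l(l+1)}{t^2}\varphi(t)\bigr)\,dt$.

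The key step is to integrate the term $\int_0^x K(x,t)\varphi''(t)\,dt$ by parts twice, transferring the derivatives onto $K$ so as to produce $\int_0^x \partial_t^2 K(x,t)\,\varphi(t)\,dt$; this is where the PDE~(\ref{waveq}), namely $\partial_x^2 K - \partial_t^2 K - \frac{l(l+1)}{x^2}K + \frac{l(l+1)}{t^2}K - q(x)K = 0$, lets the two volume integrals cancel exactly, leaving only boundary contributions. The boundary terms at $t=x$ must then be shown to combine into precisely the ``extra'' terms $-\varphi''(x)$ coming from $\frac{d^2}{dx^2}$ hitting the free term $\varphi$ and the $q(x)\varphi(x)$ and $\frac{l(l+1)}{x^2}\varphi(x)$ corrections; the identity $\frac{d}{dx}K(x,x) = \partial_x K(x,x) + \partial_t K(x,x) = \frac{q(x)}{2}$ from property~2 is the crucial ingredient that makes the $q(x)\varphi(x)$ terms match. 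The boundary terms at $t=0$ must vanish: here one uses $K(x,0)=0$ from property~3 and the membership $\varphi\in\mathcal{A}_l$, which guarantees $\varphi(0)=0$ and controls $\varphi'$ near $0$ (the condition $\varphi'(t)=O(t^l)$ for $l<0$ is designed precisely so that products like $K(x,t)\varphi'(t)$ and $\partial_t K(x,t)\varphi(t)$ have vanishing limits as $t\to0$, given the $O((z-s)^{1/2+l-\varepsilon})$ behavior of the kernel recalled in the Remark).

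The main obstacle I anticipate is the careful treatment of the integrals and boundary terms near the singular endpoint $t=0$: the kernel $K$ and its derivatives are only controlled by the asymptotics $K(x,t)=(z-s)^{-l}v(z,s)$ with $v=O((z-s)^{1/2+l-\varepsilon})$, so one must check that each integration by parts is legitimate (the integrands are integrable and the antiderivatives have the claimed limits) rather than merely formal. In particular, for $l<0$ the terms $\partial_t K(x,t)\varphi(t)$ and $K(x,t)\varphi'(t)$ near $t=0$ require combining the decay of $\varphi$ and $\varphi'$ with the blow-up rate of $K$ and $\partial_t K$; the definition of $\mathcal{A}_l$ is tailored to exactly this balance. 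A secondary technical point is that $\partial_t^2 K$ need not be well-behaved as $t\to0$, so rather than integrating by parts on all of $[0,x]$ at once it may be cleaner to work on $[\delta,x]$ and pass to the limit $\delta\to0^+$, using the Goursat data to identify the surviving boundary terms. Once these limiting arguments are in place, the algebraic cancellation via~(\ref{waveq}) and property~2 is routine.
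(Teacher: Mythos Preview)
Your proposal is correct and follows essentially the same route as the paper: expand both sides, integrate $\int_0^x K(x,t)\varphi''(t)\,dt$ by parts twice, use the PDE \eqref{waveq} to cancel the volume integrals, and invoke property~2 together with $K(x,0)=0$, $\varphi(0)=0$, and (for $-\tfrac12\le l<0$) the condition $\lim_{t\to0}K(x,t)t^l=0$ paired with $\varphi'(t)=O(t^l)$ to dispose of the boundary terms. Your suggested $[\delta,x]$--limit argument is a reasonable way to make rigorous the endpoint evaluations that the paper treats somewhat formally, but it is a refinement of the same computation rather than a different method.
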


\begin{Proof}
Developing the left-hand side we obtain {%
\begin{align*}
\mathbf{L}\mathbf{T}\left[  \varphi\right]  \left(  x\right)   &
=\varphi^{\prime\prime}\left(  x\right)  +\int_{0}^{x}{\frac{\partial
^{2}K\left(  x,t\right)  }{\partial x^{2}}\varphi\left(  t\right)  }%
dt+\frac{\partial K\left(  x,t\right)  }{\partial x}\Big|_{t=x}\varphi\left(
x\right)  +\frac{dK\left(  x,x\right)  }{dx}\varphi\left(  x\right)  +K\left(
x,x\right)  \varphi^{\prime}\left(  x\right) \\
&  \quad-\frac{l\left(  l+1\right)  }{x^{2}}\varphi\left(  x\right)
-\frac{l\left(  l+1\right)  }{x^{2}}\int_{0}^{x}{K\left(  x,t\right)
\varphi\left(  t\right)  }dt-q\left(  x\right)  \varphi\left(  x\right)
-q\left(  x\right)  \int_{0}^{x}{K\left(  x,t\right)  \varphi\left(  t\right)
}dt.
\end{align*}
} On the other hand,
\[
\mathbf{TL}_{0}\left[  \varphi\right]  \left(  x\right)  =\varphi
^{\prime\prime}\left(  x\right)  +\int_{0}^{x}{K\left(  x,t\right)
\varphi^{\prime\prime}\left(  t\right)  }dt-\frac{l\left(  l+1\right)  }%
{x^{2}}\varphi\left(  x\right)  -\int_{0}^{x}{K\left(  x,t\right)
\frac{l\left(  l+1\right)  }{t^{2}}\varphi\left(  t\right)  }dt.
\]
Integration by parts applied to the term $\int_{0}^{x}{K\left(  x,t\right)
\varphi^{\prime\prime}\left(  t\right)  }dt$ gives
\begin{align*}
\int_{0}^{x}{K\left(  x,t\right)  \varphi^{\prime\prime}\left(  t\right)  }dt
&  =K\left(  x,x\right)  \varphi^{\prime}\left(  x\right)  -K\left(
x,0\right)  \varphi^{\prime}\left(  0\right)  -\frac{\partial K\left(
x,t\right)  }{\partial t}\Big|_{t=x}\varphi\left(  x\right)  +\frac{\partial
K\left(  x,t\right)  }{\partial t}\Big|_{t=0}\varphi\left(  0\right) \\
&  \quad+\int_{0}^{x}{\frac{\partial^{2}K\left(  x,t\right)  }{\partial t^{2}%
}\varphi\left(  t\right)  }dt.
\end{align*}
Thus,%
\[
\int_{0}^{x}{K\left(  x,t\right)  \varphi^{\prime\prime}\left(  t\right)
}dt=K\left(  x,x\right)  \varphi^{\prime}\left(  x\right)  -\frac{\partial
K\left(  x,t\right)  }{\partial t}\Big|_{t=x}\varphi\left(  x\right)
+\int_{0}^{x}{\frac{\partial^{2}K\left(  x,t\right)  }{\partial t^{2}}%
\varphi\left(  t\right)  }dt
\]
where we used the supposition $\varphi\in{\mathcal{A}}_{l}$ as well as
property 3. from Theorem \ref{Th Kernel K}. Now we have
\begin{multline*}
\mathbf{L}\mathbf{T}\left[  \varphi\right]  \left(  x\right)  -\mathbf{TL}%
_{0}\left[  \varphi\right]  \left(  x\right) \\
=\int_{0}^{x}{\left(  \frac{\partial^{2}K\left(  x,t\right)  }{\partial x^{2}%
}-\frac{\partial^{2}K\left(  x,t\right)  }{\partial t^{2}}-\frac{l\left(
l+1\right)  }{x^{2}}K\left(  x,t\right)  +\frac{l\left(  l+1\right)  }{t^{2}%
}K\left(  x,t\right)  -q\left(  x\right)  K\left(  x,t\right)  \right)
\varphi\left(  t\right)  }dt\\
+2\frac{dK\left(  x,x\right)  }{dx}\varphi\left(  x\right)  -q\left(
x\right)  \varphi\left(  x\right)  =0,
\end{multline*}
due to properties 1. and 2. from Theorem \ref{Th Kernel K} satisfied by $K$.
\end{Proof}

Construction of the kernel $K$ in a closed form is a difficult task.
Nevertheless one can always calculate the image of the power $x^{2k+l+1}$ for
any $k\in0,1,2,\ldots$. These are precisely the powers of $x$ which arise in
the power series representation (\ref{dl}) of the function $\mathbf{d}_{l}$.
To formulate this mapping property of the operator $\mathbf{T}$ let us define
the following system of recursive integrals \cite{CasKravTor}
\begin{equation}%
\begin{split}
\widetilde{X}^{(0)}  &  \equiv1,\qquad\widetilde{X}^{(-1)}\equiv0,\\
\widetilde{X}^{(n)}(x)  &  =%
\begin{cases}
\displaystyle\int_{0}^{x}u_{0}^{2}(t)\widetilde{X}^{(n-1)}(t)dt, & \text{if
}n\text{ is odd},\\
-\displaystyle\int_{0}^{x}\frac{\widetilde{X}^{(n-1)}(t)}{u_{0}^{2}(t)}\,dt, &
\text{if }n\text{ is even}.
\end{cases}
\end{split}
\label{Xtilde}%
\end{equation}
Here $u_{0}$ denotes a solution of the equation
\[
-u^{\prime\prime}\left(  x\right)  +\left(  \frac{l\left(  l+1\right)  }%
{x^{2}}+q\left(  x\right)  \right)  u\left(  x\right)  =0
\]
satisfying the asymptotic relation $u(x)\sim x^{l+1}$ when $x\rightarrow0$.

Functions (\ref{Xtilde}) arise in relation with the spectral parameter power
series (SPPS) representation of the regular solutions of
(\ref{Intro Bessel perturbed}). We keep the notation $\widetilde{X}$ for
consistency with the notations from other publications on the SPPS method,
see, e.g., \cite{KrCV08}, \cite{APFT}, \cite{KrPorter2010}, \cite{KKRosu}. The
functions $\widetilde{X}^{(n)}$ are well defined if one assumes that $u_{0}$
has no zero on $(0,b]$. For example, this is true when $q(x)\geq0$ for all
$x\in(0,b]$ (see \cite[Corollary 3.3]{CasKravTor}). However, in fact we need
to consider not the functions $\widetilde{X}^{(n)}$ themselves but their
products with $u_{0}$ (see equality (\ref{MappingProperty}) below) which can
be well defined even when $u_{0}$ has zeros on $(0,b]$. In order not to
overload this paper with corresponding technical details we assume that
$u_{0}$ has no zero on $(0,b]$ but emphasize that the results remain valid
without this restriction.

\begin{Proposition}
[{\cite[Corollary 3.3]{CasKravTor}}]\label{Prop Mapping property}
\begin{equation}
\mathbf{T}[x^{2k+l+1}]=(-1)^{k}2^{2k}k!\left(  l+\frac{3}{2}\right)  _{k}%
u_{0}(x)\widetilde{X}^{(2k)}(x). \label{MappingProperty}%
\end{equation}

\end{Proposition}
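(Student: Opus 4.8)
The plan is to proceed by induction on $k$, using the transmutation property (Theorem \ref{Th Transmutation property}) together with the mapping properties of $\mathbf{T}$ on the low-order powers $x^{l+1}$ and the recursive structure of the functions $\widetilde{X}^{(n)}$. First I would establish the base case $k=0$: the function $\mathbf{T}[x^{l+1}]$ should equal $u_0(x)$ up to the normalization implied by the asymptotics $u_0(x)\sim x^{l+1}$ as $x\to 0$, since $\mathbf{T}$ preserves the leading $x^{l+1}$ behavior (the kernel $K$ satisfies $K(x,0)=0$ and the order estimate $K(x,t)=(z-s)^{-l}v(z,s)$ from the Remark after Theorem \ref{Th Kernel K}). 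Indeed $x^{l+1}$ lies in $\mathcal{A}_l$ (one checks $\varphi(0)=0$ and $\varphi'(t)=(l+1)t^l$, which is $O(t^l)$, and $\varphi'\in C[0,b]$ when $l>0$), so Theorem \ref{Th Transmutation property} gives $\mathbf{L}\mathbf{T}[x^{l+1}] = \mathbf{T}\mathbf{L}_0[x^{l+1}]$, and $\mathbf{L}_0[x^{l+1}] = -(l+1)l\,x^{l-1} + l(l+1)x^{l-1} = 0$ — wait, more carefully $\mathbf{L}_0 x^{l+1} = (x^{l+1})'' - \frac{l(l+1)}{x^2}x^{l+1} = l(l+1)x^{l-1} - l(l+1)x^{l-1} = 0$, so $\mathbf{T}[x^{l+1}]$ solves $-\mathbf{L}u = 0$, i.e. it is a solution of the $\lambda=0$ equation, and by the asymptotics it must be exactly $u_0(x)$. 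Since $\widetilde{X}^{(0)}\equiv 1$ and the constant $(-1)^0 2^0 0!\,(l+3/2)_0 = 1$, this matches (\ref{MappingProperty}) for $k=0$.

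For the inductive step, the key is to relate $x^{2k+l+1}$ to $x^{2(k-1)+l+1}$ via applications of $\mathbf{L}_0$. The natural identity is $\mathbf{L}_0[x^{2k+l+1}] = c_k\, x^{2k+l-1}$ for an explicit constant $c_k = (2k+l+1)(2k+l) - l(l+1) = 2k(2k+2l+1)$; so $x^{2(k-1)+l+1} = \frac{1}{c_k}\mathbf{L}_0[x^{2k+l+1}]$ up to lower-order corrections — but one must be careful because $\mathbf{L}_0$ does not simply invert. I would instead argue in the reverse direction: apply $\mathbf{L}$ to the right-hand side of (\ref{MappingProperty}) and show it equals $\mathbf{L}\mathbf{T}[x^{2k+l+1}] = \mathbf{T}[\mathbf{L}_0 x^{2k+l+1}] = c_k \mathbf{T}[x^{2(k-1)+l+1}]$, which by the induction hypothesis equals $c_k (-1)^{k-1}2^{2k-2}(k-1)!\,(l+3/2)_{k-1}u_0\widetilde{X}^{(2k-2)}$. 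On the other side, I would compute $\mathbf{L}[u_0\widetilde{X}^{(2k)}]$ directly using the product rule for $\mathbf{L}$ (the Schrödinger-type operator with potential $\frac{l(l+1)}{x^2}+q$): since $-u_0'' + (\tfrac{l(l+1)}{x^2}+q)u_0 = 0$, one gets $\mathbf{L}[u_0 g] = u_0'' g + 2u_0'g' + u_0 g'' - (\tfrac{l(l+1)}{x^2}+q)u_0 g = 2u_0'g' + u_0 g''$ for any $C^2$ function $g$. Applying this with $g = \widetilde{X}^{(2k)}$ and using the recursion (\ref{Xtilde}): $(\widetilde{X}^{(2k)})' = -\widetilde{X}^{(2k-1)}/u_0^2$ (even case) and then $(\widetilde{X}^{(2k-1)})' = u_0^2 \widetilde{X}^{(2k-2)}$ (odd case), a short computation should collapse $2u_0'(\widetilde{X}^{(2k)})' + u_0(\widetilde{X}^{(2k)})''$ to exactly $-\widetilde{X}^{(2k-2)}/u_0$ times $u_0^2$, i.e. to $-u_0\widetilde{X}^{(2k-2)}$ — hmm, let me not grind the constant here, but the point is it produces $u_0\widetilde{X}^{(2k-2)}$ times some constant, which after matching with $c_k(-1)^{k-1}2^{2k-2}(k-1)!\,(l+3/2)_{k-1}$ should fix the coefficient $(-1)^k 2^{2k}k!\,(l+3/2)_k$, since $c_k = 2k(2k+2l+1) = 4k(k+l+1/2) = -(-1)\cdot 2^2 \cdot k \cdot (l+3/2+k-1)$ accounts precisely for the ratio of consecutive coefficients $\frac{(-1)^k 2^{2k}k!(l+3/2)_k}{(-1)^{k-1}2^{2k-2}(k-1)!(l+3/2)_{k-1}} = -4k(l+3/2+k-1) = -c_k$.

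The main obstacle, as usual with these transmutation arguments, is ensuring the hypotheses of Theorem \ref{Th Transmutation property} hold — namely that $x^{2k+l+1}\in\mathcal{A}_l$, which requires checking $\varphi(0)=0$, $\varphi'\in C[0,b]$ when $l>0$, and $\varphi'(t)=O(t^l)$ when $l<0$; for $\varphi = x^{2k+l+1}$ with $k\ge 1$ one has $\varphi'(t) = (2k+l+1)t^{2k+l}$ which is $O(t^l)$ (indeed $O(t^{l+2})$) and continuous, so this is fine — and, secondly, pinning down the integration constants so that both sides agree not merely up to a solution of the homogeneous equation but identically. For the latter I would use the behavior near $x=0$: both $\mathbf{T}[x^{2k+l+1}]$ and $u_0\widetilde{X}^{(2k)}$ vanish like $x^{2k+l+1}$ as $x\to 0$ (the former because $\mathbf{T}$ preserves leading powers via the kernel estimate, the latter because $u_0\sim x^{l+1}$ and $\widetilde{X}^{(2k)}$ is built from $2k$ nested integrals each contributing two powers of $x$), so the difference is a solution of $-\mathbf{L}u = c_k\mathbf{T}[x^{2(k-1)+l+1}] - c_k\cdot(\text{matching RHS}) = 0$ — actually the cleanest route is: the difference $D := \mathbf{T}[x^{2k+l+1}] - (-1)^k 2^{2k}k!(l+3/2)_k u_0\widetilde{X}^{(2k)}$ satisfies $\mathbf{L}D = 0$ by the computation above (once the constant is matched), hence $D$ is a solution of the homogeneous perturbed Bessel equation; and since $D = O(x^{2k+l+1})$ while the two linearly independent solutions of that equation behave like $x^{l+1}$ and $x^{-l}$ (the regular and singular Frobenius solutions), the only such $D$ with this decay is $D\equiv 0$. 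Alternatively one can avoid the subtraction entirely and just cite \cite[Corollary 3.3]{CasKravTor} as the reference does, but the inductive argument above gives a self-contained proof.
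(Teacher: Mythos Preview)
The paper does not supply its own proof of this proposition: it is simply quoted from \cite[Corollary~3.3]{CasKravTor}, so there is no argument in the paper to compare against. Your inductive argument is correct and self-contained. The key steps all check out: $\mathbf{L}_0[x^{2k+l+1}]=c_k x^{2(k-1)+l+1}$ with $c_k=4k(k+l+\tfrac12)$; the identity $\mathbf{L}[u_0\widetilde X^{(2k)}]=-u_0\widetilde X^{(2k-2)}$ (your ``short computation'' really does collapse, since $2u_0'g'+u_0g''=-2u_0'\widetilde X^{(2k-1)}/u_0^2 + u_0\bigl(-\widetilde X^{(2k-2)}+2u_0'\widetilde X^{(2k-1)}/u_0^3\bigr)=-u_0\widetilde X^{(2k-2)}$); the ratio of successive coefficients is exactly $-c_k$; and the difference $D$ is a solution of $\mathbf{L}D=0$ that is $O(x^{l+3})$ for $k\ge 1$, hence smaller than both Frobenius solutions and therefore zero. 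The membership $x^{2k+l+1}\in\mathcal A_l$ is verified as you indicate.

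For context, the argument in \cite{CasKravTor} proceeds differently: it identifies both sides by matching the SPPS expansion of the regular solution $\mathbf{u}(x,\lambda)$ (which is $\sum_k c_k\lambda^k u_0\widetilde X^{(2k)}$) with the termwise image $\mathbf{T}[\mathbf{d}_l(x,\lambda)]$ of the series \eqref{dl}, and then reads off the coefficient of each power of $\lambda$. Your approach trades that global series comparison for a direct ODE-plus-asymptotics argument at each $k$; it is slightly longer but avoids any appeal to the convergence and term-by-term manipulation of the SPPS series.
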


\subsection{Extension of the transmutation kernel}

In this subsection we show that the kernel $K\left(  x,t\right) $, defined up
to now on the triangle $\left\{  0\leq t\leq x\leq b\right\} $, admits a
natural extension onto the square {$\left\{  0\leq t\leq x\leq b\right\}
\cup\left\{  0\leq x\leq t\leq b\right\}  $}. For this let us consider the
inverse operator ${\mathbf{T}}^{-1}$ which has the form
\[
{\mathbf{T}}^{-1}\left[  \varphi\right]  \left(  x\right)  =\varphi\left(
x\right)  -\int_{0}^{x}{L\left(  x,t\right)  \varphi\left(  t\right)  }dt.
\]
Using similar reasoning as with respect to the kernel $K$ it is easy to show
that the kernel $L$ is a solution of the equation
\begin{equation*}
\frac{\partial^{2}L\left(  x,t\right)  }{\partial x^{2}}-\frac{\partial
^{2}L\left(  x,t\right)  }{\partial t^{2}}-\frac{l\left(  l+1\right)  }{x^{2}
}L\left(  x,t\right)  +\frac{l\left(  l+1\right)  }{t^{2}}L\left(  x,t\right)
+q\left(  t\right)  L\left(  x,t\right)  =0 
\end{equation*}
and satisfies the following boundary conditions
\[
\frac{dL\left(  x,x\right)  }{dx}=\frac{1}{2}q\left(  x\right)  ,\qquad
L\left(  x,0\right)  =0,\qquad\text{and additionally }\lim_{t\rightarrow
0}{L\left(  x,t\right)  t^{l}}=0\text{ when }-\frac{1}{2}\leq l<0.
\]
Now the extension of the kernel $K(x,t)$ onto the triangle $0\leq x\leq t\leq
b$ is defined as follows $K(x,t):=L(t,x)$. Hence the extended kernel $K$
satisfies (\ref{waveq}) on $\left\{  0<t<x\leq b\right\}  \cup\left\{
0<x<t\leq b\right\}  $. Additionally to conditions \eqref{K(x,0)=0} and
\eqref{limK} it satisfies $K\left(  0,t\right)  =0$ and, for $-\frac{1}{2}\leq
l<0$, the condition $\lim_{x\rightarrow0}{K\left(  x,t\right)  \cdot x^{l}
=0}$. Note that the extended kernel $K$ is continuous on the whole square
$[0,b]\times[0,b]$.

From now on we consider this extended kernel and use the same notation $K$ for it.

\subsection{The preimage of the transmutation kernel}

The kernel $K{\left(  x,t\right)  }$ itself is the image of a certain function
$k\left(  x,t\right)  $ defined on the square $[0,b]\times[0,b]$ under the
action of the operator $\mathbf{T}$,
\begin{equation}
K\left(  x,\tau\right)  =\mathbf{T}\left[  k\left(  x,\tau\right)  \right]
=k\left(  x,\tau\right)  +\int_{0}^{x}{K\left(  x,t\right)  k\left(
t,\tau\right)  }dt. \label{K}%
\end{equation}
Note that we are applying the operator $\mathbf{T}$ with respect to the
variable $x$, the variable $\tau$ serves as a parameter.

The extension of the transmutation kernel $K$ introduced in the previous
section allows us to write down the preimage $k$ in the form \
\begin{equation}
k\left(  x,\tau\right)  :=\mathbf{T}^{-1}\left[  K\left(  x,\tau\right)
\right]  =K\left(  x,\tau\right)  -\int_{0}^{x}{K\left(  t,x\right)  K\left(
t,\tau\right)  }dt. \label{k}%
\end{equation}
Now we are interested in establishing certain basic properties of $k$ in order
to prove that it can be approximated by wave polynomials on a suitable domain.
For this we need to show that $k$ is a solution of equation
(\ref{Intro KG-sing0}). Considering the triangle $0<t<x<b$ as the domain does
not lead to the aim. Indeed, the kernel $K$ is not necessary a solution of
(\ref{waveq}) on the whole square {$\left\{  0<t\leq x<b\right\}  \cup\left\{
0<x\leq t<b\right\}  $ but only on the union of open triangles $\left\{
0<t<x<b\right\}  \cup\left\{  0<x<t<b\right\} $. Hence one can not verify that
$k$ is a solution of \eqref{Intro KG-sing0} by direct substitution of
(\ref{k}) into equation (\ref{Intro KG-sing0}), this would require in the
calculation that $K$ is the solution of (\ref{waveq}) on the whole square.
However, consideration of triangle $\left\{  0<x<t<b\right\}  $ as the domain
for $k$ does not provoke any difficulty. For that reason from now on we will
consider the triangle $\left\{  0<x<t<b\right\}  $ as the domain for all the
constructions. }

We begin with the following auxiliary result in this direction.

\begin{Lemma}
\label{Lemma transmutation of k} For any $0<x<\tau<b$ the following equality
holds
\begin{equation}
\mathbf{L}\mathbf{T}\left[  k\left(  x,\tau\right)  \right]  =\mathbf{TL}%
_{0}\left[  k\left(  x,\tau\right)  \right] . \label{Tk}%
\end{equation}

\end{Lemma}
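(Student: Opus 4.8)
The plan is to apply Theorem~\ref{Th Transmutation property} directly, once we verify that the function $x\mapsto k(x,\tau)$ belongs to the space $\mathcal{A}_l$ for each fixed $\tau\in(0,b)$. Recall that $\mathcal{A}_l$ requires three things: (i) continuity on $[0,b]$ with vanishing value at $x=0$; (ii) $C^2$-smoothness on $(0,b]$; and (iii) the appropriate behaviour of the $x$-derivative near $0$ --- continuity up to $0$ when $l>0$, and the estimate $\partial_x k(x,\tau)=O(x^l)$ as $x\to0$ when $l<0$. Once membership in $\mathcal{A}_l$ is established, \eqref{Tk} is an immediate instance of $\mathbf{L}\mathbf{T}\varphi=\mathbf{T}\mathbf{L}_0\varphi$ with $\varphi(\cdot)=k(\cdot,\tau)$, since all the operators act in the variable $x$ while $\tau$ is a parameter.

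First I would record the regularity of the extended kernel $K$. On the closed square $K$ is continuous, it satisfies $K(0,t)=0$ and $K(x,0)=0$, and on the open triangles $\{0<x<t<b\}$ and $\{0<t<x<b\}$ it solves \eqref{waveq}; moreover, from the representation recalled in the Remark after Theorem~\ref{Th Kernel K}, $K(x,t)=(z-s)^{-l}v(z,s)$ with $z=\tfrac14(x+t)^2$, $s=\tfrac14(x-t)^2$ and $v(z,s)=O((z-s)^{1/2+l-\varepsilon})$, which in particular controls the near-diagonal and near-axis behaviour of $K$ and its first $x$-derivative. Since $k(x,\tau)=K(x,\tau)-\int_0^x K(t,x)K(t,\tau)\,dt$ by \eqref{k}, continuity of $k$ in $x$ on $[0,b]$ and $k(0,\tau)=K(0,\tau)=0$ follow at once from continuity of $K$ and of the integral term (whose value at $x=0$ is $0$). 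For the $C^2$-smoothness on $(0,b]$: away from $x=0$ and for $x<\tau$, the point $(t,x)$ stays in the open triangle $\{0<t<x<b\}\subset\{t<x\}$ as $t$ ranges over $(0,x)$ except at the endpoint $t=x$, where $K(t,x)$ hits the diagonal; differentiating the integral $\int_0^x K(t,x)K(t,\tau)\,dt$ twice in $x$ produces boundary terms at $t=x$ involving $K(x,x)$, $\partial_x K(x,x)=q(x)/2$ and $\partial_t K|_{t=x}$, all of which are continuous, plus an interior integral $\int_0^x \partial_x^2\!\big(K(t,x)K(t,\tau)\big)\,dt$ that converges because $\partial_x^2 K(t,x)$ has at worst an integrable singularity as $t\to0$ (from the $(z-s)^{-l}$ factor with the $v=O((z-s)^{1/2+l-\varepsilon})$ gain) and $K(t,\tau)$ is bounded; and since $K$ solves \eqref{waveq} on $\{t<x\}$ one can, if convenient, trade $\partial_x^2$ for $\partial_t^2$ plus lower-order terms to simplify the estimate. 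Hence $k(\cdot,\tau)\in C^2((0,b])$.

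The delicate point --- and the one I expect to be the main obstacle --- is the behaviour of $\partial_x k(x,\tau)$ as $x\to0^+$, especially in the singular range $-\tfrac12\le l<0$, where we must produce the bound $\partial_x k(x,\tau)=O(x^l)$ rather than mere boundedness. Writing $\partial_x k(x,\tau)=\partial_x K(x,\tau)-K(x,x)K(x,\tau)-\int_0^x \partial_x K(t,x)\,K(t,\tau)\,dt$, the term $K(x,x)K(x,\tau)$ is harmless (it is $O(x^{1/2+l-\varepsilon})$ near $0$ by the representation, hence $O(x^l)$), and $\partial_x K(x,\tau)$ obeys the required estimate because the extension was built precisely so that $K$ satisfies $\lim_{x\to0}K(x,t)x^l=0$ and, by the analogue for $L$, the $x$-derivative inherits the $O(x^l)$ behaviour from the $(z-s)^{-l}$ structure. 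The remaining integral $\int_0^x \partial_x K(t,x) K(t,\tau)\,dt$ must be shown to be $O(x^l)$; here one substitutes the asymptotic representation for $\partial_x K(t,x)$ near the corner $t,x\to0$, notes that $K(t,\tau)$ is bounded uniformly in $t\in[0,x]$ for fixed $\tau$, and carries out an elementary size estimate of the resulting integral, using $x<\tau$ to stay away from the diagonal singularity at $t=x$ when $\tau$ is fixed away from $0$ --- or, if $\tau$ is allowed near $0$, splitting the integral at $t=x/2$ and estimating the two pieces separately. For $l>0$ the same computation gives continuity of $\partial_x k(\cdot,\tau)$ up to $x=0$, which is what $\mathcal{A}_l$ demands in that case. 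Having checked all clauses of the definition, $k(\cdot,\tau)\in\mathcal{A}_l$, and \eqref{Tk} follows by invoking Theorem~\ref{Th Transmutation property}.
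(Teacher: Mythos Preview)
Your strategy---verify $k(\cdot,\tau)\in\mathcal{A}_l$ and then quote Theorem~\ref{Th Transmutation property}---is logically sound but is \emph{not} what the paper does. The paper simply repeats, for $\varphi=k(\cdot,\tau)$, the integration-by-parts computation from the proof of Theorem~\ref{Th Transmutation property}: it expands $\mathbf{L}\mathbf{T}[k]$ and $\mathbf{T}\mathbf{L}_0[k]$, integrates $\int_0^x K(x,t)\,\partial_t^2 k(t,\tau)\,dt$ by parts, and kills the boundary terms at $t=0$ using only the two facts $K(x,0)=0$ and $k(0,\tau)=0$ (the latter from \eqref{k}). The difference is then expressed through the PDE \eqref{waveq} for $K$ and the Goursat condition $dK(x,x)/dx=q(x)/2$, which makes it vanish.

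The practical advantage of the paper's route is that it avoids your ``delicate point'' entirely: no asymptotics of $\partial_x k(x,\tau)$ as $x\to0$ are needed, nor any fine structure of $K$ coming from the $(z-s)^{-l}v$ representation---only the two vanishing conditions above. Your approach also has a formal wrinkle you pass over: $\mathcal{A}_l$ demands $C^2$ on all of $(0,b]$, whereas $k(\cdot,\tau)$ need not be $C^2$ across $x=\tau$ (precisely because $K$ is a solution of \eqref{waveq} only on the open triangles). You would therefore have to localize Theorem~\ref{Th Transmutation property} to the interval $(0,\tau)$, which is easy from its proof but should be said. In short, your plan can be completed with extra work; the paper's direct computation is shorter and uses strictly less information.
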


\begin{Proof}
On the left-hand side of (\ref{Tk}) we have
\[%
\begin{split}
\mathbf{L}\mathbf{T}\left[  k\left(  x,\tau\right)  \right]   & ={\frac
{\partial^{2}k\left(  x,\tau\right)  }{\partial x^{2}}}+\int_{0}^{x}%
{\frac{\partial^{2}K\left(  x,t\right)  }{\partial x^{2}}k\left(
t,\tau\right)  }dt+\frac{\partial K\left(  x,t\right)  }{\partial
x}\Big|_{t=x}k\left(  x,\tau\right)  +\frac{dK\left(  x,x\right)  }%
{dx}k\left(  x,\tau\right) \\
&  \quad+K\left(  x,x\right)  {\frac{\partial k\left(  x,\tau\right)
}{\partial x}} -\frac{l\left(  l+1\right)  }{x^{2}}k\left(  x,\tau\right)
-\frac{l\left(  l+1\right)  }{x^{2}}\int_{0}^{x}{K\left(  x,t\right)  k\left(
t,\tau\right)  }dt\\
& \quad-q\left(  x\right)  k\left(  x,\tau\right)  -q\left(  x\right)
\int_{0}^{x}{K\left(  x,t\right)  k\left(  t,\tau\right)  }dt.
\end{split}
\]
The right-hand side gives
\[
\mathbf{TL}_{0}\left[  k\left(  x,\tau\right)  \right]  ={\frac{\partial
^{2}k\left(  x,\tau\right)  }{\partial x^{2}}}+\int_{0}^{x}{K\left(
x,t\right)  \frac{\partial^{2}k\left(  t,\tau\right)  }{\partial t^{2}}%
}dt-\frac{l\left(  l+1\right)  }{x^{2}}k\left(  x,\tau\right)  -\int_{0}%
^{x}{K\left(  x,t\right)  \frac{l\left(  l+1\right)  }{t^{2}}k\left(
t,\tau\right)  }dt.
\]
Integration by parts leads to the equality
\begin{align*}
\int_{0}^{x}{K\left(  x,t\right)  \frac{\partial^{2}k\left(  t,\tau\right)
}{\partial t^{2}}}dt  &  =K\left(  x,x\right)  {\frac{\partial k\left(
t,\tau\right)  }{\partial t}}\Big|_{t=x}-K\left(  x,0\right)  {\frac{\partial
k\left(  t,\tau\right)  }{\partial t}}\Big|_{t=0}\\
& \quad-\frac{\partial K\left(  x,t\right) }{\partial t}\Big|_{t=x}k\left(
x,\tau\right)  +\frac{\partial K\left(  x,t\right)  }{\partial t}%
\Big|_{t=0}k\left(  0,\tau\right)  +\int_{0}^{x}{\frac{\partial^{2}K\left(
x,t\right)  }{\partial t^{2}}}k\left(  t,\tau\right)  dt.
\end{align*}
Taking into account (\ref{K(x,0)=0}) and the fact that
\begin{equation}
k\left(  0,\tau\right)  =\mathbf{T}^{-1}\left[  K\left(  x,\tau\right)
\right]  \Big|_{x=0}=K\left(  0,\tau\right)  =0 \label{k(0,tau)}%
\end{equation}
we obtain%
\[
\int_{0}^{x}{K\left(  x,t\right)  \frac{\partial^{2}k\left(  t,\tau\right)
}{\partial t^{2}}}dt=K\left(  x,x\right)  {\frac{\partial k\left(
x,\tau\right)  }{\partial x}}-\frac{\partial K\left(  x,t\right)  }{\partial
t}\Big|_{t=x}k\left(  x,\tau\right)  +\int_{0}^{x}{\frac{\partial^{2}K\left(
x,t\right)  }{\partial t^{2}}}k\left(  t,\tau\right)  dt
\]
and consequently,
\begin{multline*}
\mathbf{L}\mathbf{T}\left[  k\left(  x,\tau\right)  \right]  -\mathbf{TL}%
_{0}\left[  k\left(  x,\tau\right)  \right] \\
=\int_{0}^{x}{\left[  \frac{\partial^{2}K\left(  x,t\right)  }{\partial x^{2}%
}-\frac{\partial^{2}K\left(  x,t\right)  }{\partial t^{2}}-\frac{l\left(
l+1\right)  }{x^{2}}K\left(  x,t\right)  +\frac{l\left(  l+1\right)  }{t^{2}%
}K\left(  x,t\right)  -q\left(  x\right)  K\left(  x,t\right)  \right]
k\left(  x,\tau\right)  }dt\\
+2\frac{dK\left(  x,x\right)  }{dx}k\left(  x,\tau\right)  -q\left(  x\right)
k\left(  x,\tau\right)  =0.
\end{multline*}

\end{Proof}

In the following Proposition we summarize properties of $k$.

\begin{Proposition}
\label{Prop Goursat problen k}The function $k\left(  x,\tau\right)  $ defined
by \eqref{K} satisfies the following properties

\begin{enumerate}
\item on $0<x<\tau<b$ it satisfies the equation
\begin{equation}
\left(  \square-\frac{l\left(  l+1\right)  }{x^{2}}+\frac{l\left(  l+1\right)
}{\tau^{2}}\right)  k\left(  x,\tau\right)  =0. \label{Wave sing no q}%
\end{equation}

\item $\frac{dk\left(  x,x\right)  }{dx}\in C\left(  \left[  0,b\right]
\right)  $.

\item
\begin{equation}
k\left(  0,\tau\right)  =0. \label{k(0,t)}%
\end{equation}

\end{enumerate}
\end{Proposition}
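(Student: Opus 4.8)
The plan is to dispatch the three assertions in turn, reusing Lemma \ref{Lemma transmutation of k}, the defining relation (\ref{K}), and the injectivity of $\mathbf{T}$; the substance lies in property 2. Property 3 is in fact already available: it is (\ref{k(0,tau)}), obtained in the course of the proof of Lemma \ref{Lemma transmutation of k} from the identity $k(0,\tau)=\mathbf{T}^{-1}\bigl[K(\cdot,\tau)\bigr](0)=K(0,\tau)$ together with the vanishing $K(0,\tau)=0$ of the extended kernel.

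For property 1 I would apply $\mathbf{L}$ to (\ref{K}). Throughout, $\mathbf{T}$, $\mathbf{L}$, $\mathbf{L}_{0}$ act in the variable $x$ with $\tau$ a parameter, and I write $\mathbf{L}_{0}^{\tau}$ for $\mathbf{L}_{0}$ acting instead in $\tau$. By Lemma \ref{Lemma transmutation of k} and (\ref{K}),
\[
\mathbf{T}\bigl[\mathbf{L}_{0}k(\cdot,\tau)\bigr](x)=\mathbf{L}\mathbf{T}\bigl[k(\cdot,\tau)\bigr](x)=\mathbf{L}K(x,\tau).
\]
Since the extended kernel $K$ solves (\ref{waveq}) on the open triangle $\{0<x<\tau<b\}$, on that set $\mathbf{L}K(x,\tau)=\partial_{\tau}^{2}K(x,\tau)-\frac{l(l+1)}{\tau^{2}}K(x,\tau)=\mathbf{L}_{0}^{\tau}K(x,\tau)$. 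As $\mathbf{T}$ integrates only in $x$, it commutes with $\mathbf{L}_{0}^{\tau}$ (differentiation under the integral sign being legitimate once one knows, from (\ref{k}) and the interior smoothness of $K$, that $k$ is of class $C^{2}$ in each variable on the open triangle), so $\mathbf{L}_{0}^{\tau}K(x,\tau)=\mathbf{T}\bigl[\mathbf{L}_{0}^{\tau}k(\cdot,\tau)\bigr](x)$. Comparing the two expressions gives $\mathbf{T}\bigl[\mathbf{L}_{0}k(\cdot,\tau)-\mathbf{L}_{0}^{\tau}k(\cdot,\tau)\bigr]=0$, and since $\mathbf{T}$ is injective, $\mathbf{L}_{0}k(x,\tau)=\mathbf{L}_{0}^{\tau}k(x,\tau)$, which is precisely (\ref{Wave sing no q}). (Alternatively one may substitute (\ref{k}) straight into (\ref{Wave sing no q}): on the triangle $\{0<x<\tau<b\}$ every use of (\ref{waveq}) in that computation occurs at arguments where $K$ genuinely solves (\ref{waveq}), which is exactly why this triangle, rather than $\{0<\tau<x<b\}$, is chosen as the domain of $k$.)

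For property 2 I would restrict (\ref{k}) to the diagonal, $k(x,x)=K(x,x)-\int_{0}^{x}K(t,x)^{2}\,dt$, and differentiate using the Leibniz rule,
\[
\frac{dk(x,x)}{dx}=\frac{dK(x,x)}{dx}-K(x,x)^{2}-2\int_{0}^{x}K(t,x)\,\frac{\partial K(t,s)}{\partial s}\Big|_{s=x}\,dt .
\]
By property 2 of Theorem \ref{Th Kernel K} the first term equals $q(x)/2\in C([0,b])$, and $K(x,x)^{2}$ is continuous because $K$ is; it remains to see that the integral is a continuous function of $x$ on $[0,b]$. In the interior and up to the diagonal this follows from the continuity of $K$ and of its first-order derivatives there; the only delicate place is $t\to0$, which is governed by the asymptotics recalled in the Remark after Theorem \ref{Th Kernel K}: writing $K(x,t)=(z-s)^{-l}v(z,s)$ with $z-s=xt$, $v=O\bigl((z-s)^{1/2+l-\varepsilon}\bigr)$ and the corresponding one-order-weaker bounds for the derivatives of $v$, one gets $K(t,x)\,\partial_{s}K(t,s)\big|_{s=x}=O\bigl((tx)^{-2\varepsilon}\bigr)$, an integrable singularity; moreover $\int_{0}^{x}(tx)^{-2\varepsilon}\,dt=O\bigl(x^{1-4\varepsilon}\bigr)\to0$ as $x\to0$ once $\varepsilon$ is taken in $(0,1/4)$, so the integral extends continuously to $x=0$ with value $0$ and $\lim_{x\to0}\frac{dk(x,x)}{dx}=q(0)/2$. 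Hence $\frac{dk(x,x)}{dx}\in C([0,b])$.

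I expect this last step to be the main obstacle: justifying the Leibniz differentiation and the continuity of the diagonal integral up to $x=0$ in the genuinely singular range $-\frac{1}{2}\le l<0$, which is precisely where one must lean on the boundary asymptotics of $K$ from \cite{Volk}. By contrast, properties 1 and 3 are essentially formal consequences of Lemma \ref{Lemma transmutation of k}, the identity (\ref{K}) and the injectivity of $\mathbf{T}$, the only point requiring attention being the differentiation under the integral sign with respect to the parameter $\tau$.
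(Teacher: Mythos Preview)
Your approach coincides with the paper's. The paper's proof is considerably terser: for property 1 it writes only ``follows directly from Lemma \ref{Lemma transmutation of k}'' (your argument via $\mathbf{L}K=\mathbf{L}_{0}^{\tau}K$, commutation of $\mathbf{T}$ with $\mathbf{L}_{0}^{\tau}$, and injectivity of $\mathbf{T}$ is precisely what ``directly'' means here), and for property 3 it simply refers to (\ref{k(0,tau)}), as you do. For property 2 the paper restricts (\ref{k}) to the diagonal, obtains
\[
\frac{dk(x,x)}{dx}=\frac{1}{2}q(x)-\frac{d}{dx}\int_{0}^{x}K^{2}(t,x)\,dt,
\]
and asserts continuity ``because $q$ and $K$ are continuous,'' without expanding the Leibniz derivative or discussing the singular endpoint. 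Your detailed treatment of the integral term and of the range $-\tfrac12\le l<0$ via the boundary asymptotics of $K$ therefore goes beyond what the paper's own proof supplies; the paper does not carry out (and does not seem to regard as necessary) the analysis you flag as the ``main obstacle.''
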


\begin{Proof}
Property 1. follows directly from Lemma \ref{Lemma transmutation of k}. From
(\ref{k}) we have that $k\left(  x,x\right)  =K\left(  x,x\right)  -\int
_{0}^{x}{K^{2}\left(  t,x\right)  }dt$. Then
\begin{equation}
\frac{dk\left(  x,x\right)  }{dx}=\frac{1}{2}q\left(  x\right)  -\frac{d}%
{dx}\int_{0}^{x}{K^{2}\left(  t,x\right)  }dt. \label{dk/dx}%
\end{equation}
This expression is a continuous function because $q$ and $K$ are continuous.
Finally, Property 3. was established above (\ref{k(0,tau)}).
\end{Proof}

\section{Generalized wave polynomials}

\subsection{Wave polynomials}

\begin{Definition}
[\cite{KhmeKravTorTrem}] The polynomials
\begin{equation*}
p_{0}(x,t)=1,\quad p_{2m-1}(x,t)=\sum_{\mathrm{even}\text{ }k=0}^{m}\binom
{m}{k}x^{m-k}t^{k},\quad p_{2m}(x,t)=\sum_{\mathrm{odd}\text{ }k=1}^{m}%
\binom{m}{k}x^{m-k}t^{k},
\end{equation*}
are called \textbf{wave polynomials}.
\end{Definition}

Every wave polynomial is a solution of the wave equation $\square u(x,t)=0$.
Our next aim is to use them for constructing a system of solutions of equation
(\ref{Intro KG-sing0}). For this together with the operator $Y_{l,x} $ we
consider the operator $Y_{l,t}$. Due to Theorem \ref{Teorema 1} a solution
$u(x,t)$ of the wave equation is transformed into a solution of
(\ref{Intro KG-sing0}) by the composition of the operators $Y_{l,x}Y_{l,t}$
whenever $\frac{\partial u(x,t)}{\partial x}\Big|_{x=0}=\frac{\partial
u(x,t)}{\partial t}\Big|_{t=0}=0$. Indeed, if this is true, from (\ref{Eq.T})
we obtain
\[
\left(  \square-\frac{l\left(  l+1\right)  }{x^{2}}+\frac{l\left(  l+1\right)
}{t^{2}}\right)  Y_{l,x}Y_{l,t}u(x,t)=Y_{l,x}Y_{l,t}\square u(x,t)=0
\]
and hence $U(x,t):=Y_{l,x}Y_{l,t}u(x,t)$ is a solution of
(\ref{Intro KG-sing0}).

\begin{Remark}
The wave polynomials satisfying the condition $\frac{\partial u(x,t)}{\partial
x}\Big|_{x=0}=\frac{\partial u(x,t)}{\partial t}\Big|_{t=0}=0$ are
$p_{0}(x,t)$ and
\begin{equation*}
p_{4n-1}\left(  x,t\right)  =\sum_{\text{even }k=0}^{2n}{{\binom{2n}{k}%
}x^{2n-k}t^{k}},\qquad n=1,2,\ldots. 
\end{equation*}
\end{Remark}

\subsection{Generalized wave polynomials related to equation
(\ref{Intro KG-sing0})}

The images of the wave polynomials under the action of the operator
$Y_{l,x}Y_{l,t}$ are called generalized wave polynomials related to
(\ref{Intro KG-sing0}).

\begin{Proposition}
\label{Prop Gen Polyn U}The generalized wave polynomials $U_{0}\left(
x,t\right)  =Y_{l,x}Y_{l,t}\left[  p_{0}\left(  x,t\right)  \right]  $ and
$U_{4n-1}\left(  x,t\right)  = \linebreak[4]Y_{l,x}Y_{l,t}[ p_{4n-1}\left(
x,t\right)  ] $ satisfy equation \eqref{Intro KG-sing0} together with the
boundary conditions
\begin{equation}
U\left(  0,t\right)  =0\qquad\text{and}\qquad U\left(  x,0\right)  =0.
\label{boundary cond U}%
\end{equation}
They have the form
\begin{equation}
U_{0}\left(  x,t\right)  =C_{0}^{2}x^{l+1}t^{l+1} \label{U0}%
\end{equation}
and
\begin{equation}
\displaystyle U_{4n-1}\left(  x,t\right)  =\sum_{\text{even }k=0}^{2n}%
{{\binom{2n}{k}}\left[  \frac{\Gamma{\left(  n+\frac{1-k}{2}\right)  }%
\Gamma{\left(  \frac{k+1}{2}\right)  }\Gamma^{2}{\left(  l+1\right)  }%
}{2^{2\left(  l+\frac{3}{2}\right)  }\Gamma^{2}{\left(  l+\frac{3}{2}\right)
}\Gamma{\left(  n+l+\frac{3-k}{2}\right)  }\Gamma{\left(  l+\frac{k+3}%
{2}\right)  }}x^{2n-k+l+1}t^{k+l+1}\right]  }. \label{Eq1.21}%
\end{equation}

\end{Proposition}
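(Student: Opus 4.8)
The plan is to verify the three assertions of Proposition \ref{Prop Gen Polyn U} more or less in the order they are stated, leaning heavily on Theorem \ref{Teorema 1} and Proposition \ref{P1.1}. First I would recall that, by the Remark preceding the subsection, $p_{0}$ and the $p_{4n-1}$ are precisely those wave polynomials whose $x$-derivative at $x=0$ and $t$-derivative at $t=0$ vanish. Hence by the computation displayed just above the statement (applying \eqref{Eq.T} in each variable separately, which is legitimate since the relevant first derivatives vanish so the correction terms $f'(0)x$ disappear), the function $U:=Y_{l,x}Y_{l,t}p$ satisfies $\bigl(\square-\tfrac{l(l+1)}{x^{2}}+\tfrac{l(l+1)}{t^{2}}\bigr)U=Y_{l,x}Y_{l,t}\square p=0$, since $\square p=0$ for every wave polynomial. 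This settles that each $U_{0}$ and $U_{4n-1}$ solves \eqref{Intro KG-sing0}.

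Next I would check the boundary conditions \eqref{boundary cond U}. By Proposition \ref{Prop Gen Polyn U}'s companion results on $Y_{l,x}$ (in particular $\lim_{x\to0}Y_{l,x}f(x)=0$ for any continuous $f$, from the Proposition in Section 2.2), applying $Y_{l,x}$ in the $x$ variable forces the image to vanish as $x\to0$, and likewise $Y_{l,t}$ forces vanishing as $t\to0$. One must be a little careful that the inner operator has already produced a function continuous up to the boundary so that the outer limit statement applies; this is immediate because $Y_{l,t}$ maps $C([0,b])$ into $C([0,b])$ (Proposition in Section 2.2 on continuity), and polynomials are certainly continuous. So $U(0,t)=0$ and $U(x,0)=0$.

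Finally I would compute the explicit forms \eqref{U0} and \eqref{Eq1.21}. Since $Y_{l,x}$ and $Y_{l,t}$ act on different variables they commute and can be applied term by term to the finite sum defining $p_{4n-1}=\sum_{\text{even }k}\binom{2n}{k}x^{2n-k}t^{k}$. Then Proposition \ref{P1.1} gives $Y_{l,x}[x^{2n-k}]=C_{2n-k}x^{2n-k+l+1}$ and $Y_{l,t}[t^{k}]=C_{k}t^{k+l+1}$, so the coefficient of $x^{2n-k+l+1}t^{k+l+1}$ is $\binom{2n}{k}C_{2n-k}C_{k}$. Substituting the closed form of $C_{k}$ from Proposition \ref{P1.1} and simplifying the Gamma factors yields \eqref{Eq1.21}; the case $p_{0}\equiv1$ gives $Y_{l,x}Y_{l,t}[1]=C_{0}^{2}x^{l+1}t^{l+1}$, i.e. \eqref{U0}. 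The only genuinely non-routine point is the bookkeeping in the commutation/term-by-term argument together with recognizing that the hypotheses of Theorem \ref{Teorema 1} (vanishing first derivatives at the origin) are met, which is exactly why only $p_{0}$ and the $p_{4n-1}$ appear; the Gamma-function simplification is mechanical. I expect the main obstacle, such as it is, to be a careful justification that $\square$ commutes with $Y_{l,x}Y_{l,t}$ on this class — i.e. that no boundary correction term survives — which reduces to the derivative conditions singled out in the Remark.
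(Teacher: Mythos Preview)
Your proposal is correct and follows essentially the same route as the paper's proof: the paper simply invokes the discussion preceding the proposition (i.e., Theorem~\ref{Teorema 1} together with the vanishing-derivative conditions singled out in the Remark) for the PDE, the definition of $Y_{l,x}$ and $Y_{l,t}$ for the boundary conditions \eqref{boundary cond U}, and Proposition~\ref{P1.1} for the explicit formulas \eqref{U0} and \eqref{Eq1.21}. Your version just spells out these steps in more detail.
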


\begin{Proof}
The generalized polynomials $U_{0}$, $U_{4n-1}$, $n=1,2,\ldots$ are solutions
of (\ref{Intro KG-sing0}) due to the explanation given in the preceding
subsection. Equalities (\ref{boundary cond U}) follow from the definition of
$Y_{l,x}$ and $Y_{l,t}$. Finally, (\ref{U0}) and (\ref{Eq1.21}) follow from
Proposition \ref{P1.1}.\bigskip
\end{Proof}

\subsection{Generalized wave polynomials related to equation (\ref{waveq})}

Consider the functions
\begin{equation}
u_{0}\left(  x,t\right)  =\mathbf{T}\left[  U_{0}\left(  x,t\right)  \right]
\qquad\text{and}\qquad u_{n}\left(  x,t\right)  =\mathbf{T}\left[
U_{4n-1}\left(  x,t\right)  \right]  ,\quad n=1,2,\ldots. \label{u_n}%
\end{equation}
They will be called the generalized wave polynomials related to equation
(\ref{waveq}). The following statement summarizes some important properties of
these functions.

\begin{Proposition}
\begin{enumerate}
\item The functions \eqref{u_n} are solutions of \eqref{waveq}.

\item The functions \eqref{u_n} can be written as follows
\begin{equation}
u_{n}\left(  x,t\right)  =u_{0}\left(  x\right)  \sum_{k=0}^{n}{\Xi
_{n,k}{\tilde{X}}^{\left(  2\left(  n-k\right)  \right)  }\left(  x\right)
t^{2k+l+1}},\qquad n=1,2,\ldots\label{u_n explicit}%
\end{equation}
where $\Xi_{n,k}=C^{2}{\binom{2n}{2k}}\left(  -1\right)  ^{n-k}2^{2\left(
n-k\right)  }\left(  n-k\right)  !\frac{\Gamma{\left(  n+\frac{1}{2}-k\right)
}\Gamma{\left(  k+\frac{1} {2}\right)  }}{\Gamma{\left(  k+l+\frac{3}%
{2}\right)  }\Gamma{\left(  l+\frac{3}{2}\right)  }}$, $C=\frac{\Gamma{\left(
l+1\right)  }} {2^{l+\frac{3}{2}}\Gamma{\left(  l+\frac{3}{2}\right)  }}$.

\item The functions \eqref{u_n} satisfy the properties
\begin{align*}
\lim_{t\rightarrow0}{u_{n}\left(  x,t\right)  \cdot t^{l}} & =0,\qquad\forall
n,\ \forall l\in\left(  -\frac{1}{2},0\right)  \qquad\text{and}\qquad
u_{n}\left(  x,0\right)  =0,\qquad\forall n,\ \forall l\geq-\frac{1}{2},\\
\lim_{x\rightarrow0}{u_{n}\left(  x,t\right)  \cdot x^{l}} & =0, \qquad\forall
n,\ \forall l\in\left(  -\frac{1}{2},0\right)  \qquad\text{and}\qquad
u_{n}\left(  0,x\right)  =0,\qquad\forall n,\ \forall l\geq-\frac{1}{2}.
\end{align*}
\end{enumerate}
\end{Proposition}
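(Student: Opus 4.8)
The plan is to prove the three items in order, using at each stage the corresponding property already established for $U_0$ and $U_{4n-1}$ together with Proposition \ref{Prop Mapping property} and the transmutation property from Theorem \ref{Th Transmutation property}.

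\textbf{Item 1.} First I would show that each $u_n$ solves \eqref{waveq}. Since $U_0$ and $U_{4n-1}$ solve \eqref{Intro KG-sing0}, it suffices to apply $\mathbf{T}$ (acting in the variable $x$, with $t$ a parameter) and invoke the intertwining relation. Concretely, for fixed $t$ the function $x\mapsto U_{4n-1}(x,t)$ is, by \eqref{Eq1.21}, a finite linear combination of powers $x^{2(n-k)+l+1}$, hence lies in $\mathcal{A}_l$; therefore Theorem \ref{Th Transmutation property} gives $\mathbf{L}\mathbf{T}[U_{4n-1}(\cdot,t)]=\mathbf{T}\mathbf{L}_0[U_{4n-1}(\cdot,t)]$, i.e. $\bigl(\partial_x^2-\tfrac{l(l+1)}{x^2}-q(x)\bigr)u_n = \mathbf{T}\bigl[\partial_x^2 U_{4n-1}-\tfrac{l(l+1)}{x^2}U_{4n-1}\bigr]$. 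But from \eqref{Intro KG-sing0} the bracket equals $\partial_t^2 U_{4n-1}-\tfrac{l(l+1)}{t^2}U_{4n-1}$, and since $\mathbf{T}$ commutes with $\partial_t^2$ and with multiplication by $\tfrac{l(l+1)}{t^2}$ (both act only in $t$, which $\mathbf{T}$ treats as a parameter), the right-hand side is $\partial_t^2 u_n-\tfrac{l(l+1)}{t^2}u_n$. Rearranging yields exactly \eqref{waveq}. The same argument applied to $U_0$ handles $u_0(x,t)$.

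\textbf{Item 2.} Next I would derive the explicit form \eqref{u_n explicit}. Apply $\mathbf{T}$ termwise to \eqref{Eq1.21}: each term is a constant times $x^{2(n-k)+l+1}$ times $t^{2k+l+1}$, and $\mathbf{T}$ acts only on the $x$-power, so Proposition \ref{Prop Mapping property} gives $\mathbf{T}[x^{2(n-k)+l+1}] = (-1)^{n-k}2^{2(n-k)}(n-k)!\,(l+\tfrac32)_{n-k}\,u_0(x)\,\widetilde{X}^{(2(n-k))}(x)$. Collecting the resulting constants — the binomial $\binom{2n}{k}$ (note only even $k$ contribute, so reindex $k\rightsquigarrow 2k$ to match the stated $\binom{2n}{2k}$), the two $\Gamma$-ratios from \eqref{Eq1.21}, and the factor just produced by the mapping property, together with $C_k$-type constants folded into $C=\Gamma(l+1)/(2^{l+3/2}\Gamma(l+3/2))$ — one obtains precisely the coefficient $\Xi_{n,k}$ claimed. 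This step is essentially bookkeeping: one must check that $(l+3/2)_{n-k} = \Gamma(n-k+l+3/2)/\Gamma(l+3/2)$ combines with the $\Gamma(n+\tfrac{1-k}{2})/\Gamma(n+l+\tfrac{3-k}{2})$ and $\Gamma(\tfrac{k+1}{2})/\Gamma(l+\tfrac{k+3}{2})$ factors and the two $2^{l+3/2}$ denominators to give $\Xi_{n,k}$ as written.

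\textbf{Item 3.} Finally I would establish the boundary behaviour. The relations at $t=0$ transfer directly from $U_{4n-1}$: by \eqref{u_n explicit} (or \eqref{Eq1.21}) each $u_n(x,t)$ carries an explicit factor $t^{2k+l+1}$ with $k\ge 0$, so $u_n(x,0)=0$ for all $l\ge -\tfrac12$, and for $l\in(-\tfrac12,0)$ the product $u_n(x,t)t^l$ behaves like $t^{2k+2l+1}\to 0$ as $t\to0$ since $2l+1>0$. The relations at $x=0$ are slightly less immediate because $\mathbf{T}$ acts in $x$; however $\mathbf{T}$ is a Volterra operator, so $\mathbf{T}[\varphi](0)=\varphi(0)$, giving $u_n(0,t)=U_{4n-1}(0,t)=0$ by \eqref{boundary cond U}. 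For the weighted limit when $l\in(-\tfrac12,0)$ one uses the representation $K(x,t)=(z-s)^{-l}v(z,s)$ with $v=O((z-s)^{1/2+l-\varepsilon})$ recalled in the Remark after Theorem \ref{Th Kernel K}: writing $u_n(x,t)=U_{4n-1}(x,t)+\int_0^x K(x,s)U_{4n-1}(s,t)\,ds$, multiply by $x^l$; the first term tends to $0$ as above, and for the integral one estimates $|K(x,s)|\le C(x-s)^{1/2-\varepsilon}(x+s)^{1/2-\varepsilon}$ near $s=x=0$, so $x^l\int_0^x|K(x,s)||U_{4n-1}(s,t)|\,ds = O(x^{l+1/2-\varepsilon+1+l+1})\to0$ since the exponent exceeds $2l+1>0$. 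The swap $u_n(0,x)=0$ follows by the same reasoning with the roles of the variables exchanged, using the extension $K(x,t)=L(t,x)$ and the symmetric boundary conditions on $L$ recorded in Section 3.3.

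The main obstacle is the last part of Item 3: unlike the $t$-side, where the explicit powers of $t$ make everything transparent, the $x$-side requires controlling the Volterra integral $\int_0^x K(x,s)U_{4n-1}(s,t)\,ds$ near the corner, and this is where the refined estimate $v=O((z-s)^{1/2+l-\varepsilon})$ on the transmutation kernel (rather than mere continuity) is genuinely needed. Everything else is either a direct application of the intertwining identity or a $\Gamma$-function computation.
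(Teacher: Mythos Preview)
Your treatment of Items 1 and 2 matches the paper's proof exactly: Property 1 is Theorem \ref{Th Transmutation property}, Property 2 is Proposition \ref{Prop Mapping property}, and you have simply unpacked the bookkeeping.

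For Item 3 your argument works but is considerably more elaborate than the paper's. The paper observes that \emph{all four} limits follow directly from the explicit formula \eqref{u_n explicit}. The $t$-side is exactly as you say. For the $x$-side, recall that $u_0(x)\sim x^{l+1}$ as $x\to 0$ (this is part of the definition of $u_0$) and that each $\widetilde{X}^{(2m)}$ is continuous at $0$; hence $x^{l}u_n(x,t)=x^{l}u_0(x)\sum_k\Xi_{n,k}\widetilde{X}^{(2(n-k))}(x)t^{2k+l+1}$ behaves like $x^{2l+1}$ times a bounded function, and $2l+1>0$ for $l\in(-\tfrac12,0)$. So no integral estimate on $K$ is needed, and the ``main obstacle'' you identify dissolves once you use the formula you just proved in Item 2.

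Your detour through the Volterra representation is valid in spirit, but note a slip: from $K(x,t)=(z-s)^{-l}v(z,s)$ with $z-s=xt$ one gets $|K(x,s)|\le C\,(xs)^{1/2-\varepsilon}$, not $(x-s)^{1/2-\varepsilon}(x+s)^{1/2-\varepsilon}$. Your stated bound would force $K(x,x)=0$, contradicting $K(x,x)=\tfrac12\int_0^x q$. With the corrected bound the estimate $x^{l}\int_0^x|K(x,s)|\,|U_{4n-1}(s,t)|\,ds=O(x^{2l+3-2\varepsilon})\to 0$ goes through; but, again, the paper avoids this altogether by reading the $x$-behaviour straight off \eqref{u_n explicit}.
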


\begin{Proof}
Property 1. follows from Theorem \ref{Th Transmutation property}. Property 2.
is a direct consequence of Proposition \ref{Prop Mapping property}. Finally,
property 3. follows from (\ref{u_n explicit}).
\end{Proof}

Let us introduce a special notation for the traces of the generalized wave
polynomials (\ref{u_n}) on the line $t=x$. Thus,
\begin{equation}\label{cn def}
\mathbf{c}_{n}\left(  x\right)  :=\frac{\Gamma^{2}{\left(  l+1\right)  }\pi
}{2^{2\left(  l+\frac{3}{2}\right)  }\Gamma^{3}{\left(  l+\frac{3}{2}\right)
}}u_{0}\left(  x\right)  \sum_{k=0}^{n}{\frac{\left(  -1\right)  ^{n-k}\left(
2n\right)  !}{4^{k}\Gamma{\left(  k+1\right)  }\Gamma{\left(  k+l+\frac{3}%
{2}\right)  }}}\widetilde{X}^{\left(  2\left(  n-k\right)  \right)  }\left(
x\right)  x^{2k+l+1},\quad n=1,2,\ldots.
\end{equation}

\section{Analytical approximation of the transmutation kernel}\label{Sect 5}

The aim of this section is to show that the transmutation kernel $K$ can be
uniformly approximated by linear combinations of the generalized wave
polynomials (\ref{u_n explicit}). For this we first consider its preimage $k$
and prove that it can be uniformly approximated by the generalized wave
polynomials (\ref{U0}), (\ref{Eq1.21}). This is done by establishing the
well-posedness of a corresponding Goursat problem appearing in Proposition
\ref{Prop Goursat problen k} and certain completeness of the traces of
(\ref{U0}), (\ref{Eq1.21}) on $t=x$.

The following lemma is a simple corollary of the Müntz theorem \cite[Chap.
11]{DeVoreLorentz}, \cite[Chap. XI]{Philip}.

\begin{Lemma}
\label{Lemma Muntz}Let $P_{j}$ be a sequence of different positive numbers,
such that $\lim_{j\rightarrow\infty}{P_{j}}=\infty$ and $\sum_{j=1}^{\infty
}{\frac{1}{P_{j}}}=\infty$. Then $\left\{  x^{P_{j}}\right\}  $ is complete in
$C_{0}\left(  \left[  0,b\right]  \right)  $. If additionally $P_{j}>1$ for
all $j\in\mathbf{N}$, then $\left\{  x^{P_{j}}\right\}  $ is complete in
$C_{0}^{1}\left(  \left[  0,b\right]  \right)  $.
\end{Lemma}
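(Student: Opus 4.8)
The plan is to derive this as a corollary of the classical Müntz theorem, in the form that for a sequence of distinct positive exponents $P_j$ tending to infinity, the span of $\{x^{P_j}\}$ is dense in $C([0,b])$ among functions vanishing at $0$ precisely when $\sum 1/P_j = \infty$. The only work is to transfer this from the usual interval $[0,1]$ to $[0,b]$ and to upgrade the $C_0$ statement to the $C_0^1$ statement under the extra hypothesis $P_j > 1$.

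First I would dispose of the rescaling: the change of variable $x \mapsto x/b$ is a linear homeomorphism $C_0([0,b]) \to C_0([0,1])$ that sends $x^{P_j}$ to $b^{P_j} (x/b)^{P_j}$, i.e. to a nonzero scalar multiple of the corresponding monomial on $[0,1]$; hence completeness on $[0,1]$ (the standard Müntz statement) is equivalent to completeness on $[0,b]$. So without loss of generality $b=1$, and the first assertion is exactly Müntz's theorem as stated in \cite[Chap. 11]{DeVoreLorentz} or \cite[Chap. XI]{Philip} (the divergence of $\sum 1/P_j$ being the density criterion, and the absence of the exponent $0$ from the list being compensated by restricting to functions vanishing at the origin).

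For the second assertion, suppose $P_j > 1$ for all $j$ and let $\varphi \in C_0^1([0,b])$, so $\varphi(0)=\varphi'(0)=0$. The key observation is that differentiation $\frac{d}{dx}$ maps $C_0^1([0,b])$ into $C_0([0,b])$, and maps $x^{P_j}$ to $P_j x^{P_j - 1}$. The exponents $Q_j := P_j - 1$ are distinct positive numbers with $Q_j \to \infty$, and since $P_j \to \infty$ we have $1/Q_j = 1/(P_j-1) \sim 1/P_j$, so $\sum 1/Q_j = \infty$ as well; by the first part $\{x^{Q_j}\}$ is complete in $C_0([0,b])$. Hence, given $\varepsilon>0$, choose a finite linear combination $\sum_j a_j P_j x^{P_j-1}$ with $\|\varphi' - \sum_j a_j P_j x^{P_j-1}\|_\infty < \varepsilon/b$. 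Set $\psi(x) := \sum_j a_j x^{P_j}$; then $\psi \in C_0^1([0,b])$ with $\psi' = \sum_j a_j P_j x^{P_j-1}$, and for any $x\in[0,b]$,
\[
|\varphi(x) - \psi(x)| = \left| \int_0^x \bigl(\varphi'(s) - \psi'(s)\bigr)\,ds \right| \le x \cdot \|\varphi' - \psi'\|_\infty < \varepsilon,
\]
using $\varphi(0)=\psi(0)=0$. Thus $\{x^{P_j}\}$ is complete in $C_0^1([0,b])$ with respect to the $C^1$ norm (the argument controls both $\|\varphi-\psi\|_\infty$ and $\|\varphi'-\psi'\|_\infty$).

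There is no serious obstacle here; the statement is genuinely a ``simple corollary.'' The only point requiring a line of care is the asymptotic equivalence $1/(P_j-1) \sim 1/P_j$ ensuring the divergence hypothesis is preserved under the shift of exponents, together with checking that the primitive $\psi$ of an approximant to $\varphi'$ lies in the right space and has the right value at $0$ — both immediate from $P_j > 1$. I would present the rescaling and the Müntz citation in one sentence, then give the differentiate–approximate–integrate argument for the $C_0^1$ case as above.
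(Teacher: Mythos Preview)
Your argument is correct and is precisely the kind of derivation the paper has in mind: the paper does not give a proof of this lemma at all, merely labels it ``a simple corollary of the M\"untz theorem'' with references to \cite{DeVoreLorentz} and \cite{Philip}. Your rescaling to $[0,1]$ plus the differentiate--approximate--integrate step for the $C_0^1$ statement supplies exactly the missing details, and the use of the $C^1$ norm is the right reading (this is how the lemma is applied in Proposition~\ref{Prop k(x,x) approx}). One microscopic simplification: you do not need the asymptotic $1/(P_j-1)\sim 1/P_j$; since $P_j>1$ gives $1/(P_j-1)>1/P_j$, divergence of $\sum 1/Q_j$ follows by direct comparison.
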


Now we are interested to show that the function $\frac{dk\left(  x,x\right)
}{dx}$ can be approximated uniformly by linear combinations of the functions
$\frac{dU_{0}\left(  x,x\right)  }{dx}$ and $\frac{dU_{4n-1}\left(
x,x\right)  }{dx}$, $n=1,2,\ldots$. We observe that $\frac{dU_{0}\left(
x,x\right)  }{dx}=\bar{C}_{0}x^{2l+1}$ and $\frac{dU_{4n-1}\left(  x,x\right)
}{dx}=\bar{C}_{n}x^{2\left(  n+l\right)  +1}$, $n=1,2,\ldots$ where $\bar
{C}_{n}$ are constants, and hence the zero power of $x$ is an element of the
set $\left\{  \frac{dU_{0}\left(  x,x\right)  }{dx}\right\}  \cup\left\{
\frac{dU_{4n-1}\left(  x,x\right)  }{dx}\right\} _{n=1}^{\infty}$ if only
$l=-\frac{1}{2}$. However, as we notice below (Proposition
\ref{Prop k(x,x) approx}) and can be seen from (\ref{dk/dx}), if
\begin{equation}
q(0)=0 \label{q0=0}%
\end{equation}
the function $k\left(  x,x\right)  $ is an element of $C_{0}^{1}\left(
\left[  0,b\right]  \right)  $. Without loss of generality one can always
suppose that (\ref{q0=0}) is fulfilled. Indeed, considering $\Lambda
:=\lambda-q\left(  0\right)  $ and $q_{0}\left(  x\right)  :=q\left(
x\right)  -q\left(  0\right)  $ one can rewrite equation
(\ref{Intro Bessel perturbed}) in the form
\begin{equation}\label{Bessel perturbed transformed}
\left(  -\frac{d^{2}}{dx^{2}}+\frac{l\left(  l+1\right)  }{x^{2}}+q_{0}\left(
x\right)  \right)  u=\Lambda u,
\end{equation}
where the potential already fulfills (\ref{q0=0}). From now on we assume
(\ref{q0=0}) to be fulfilled. Then the following result is valid.

\begin{Proposition}
\label{Prop k(x,x) approx}For any $\varepsilon>0$ there exist such
$N\in\mathbb{N}$ and $a_{0},a_{1},\ldots,a_{N}\in\mathbb{C}$ that for all
$x\in\left[  0,b\right]  $ the inequality holds
\begin{equation*}
\left\vert \frac{dk\left(  x,x\right)  }{dx}-\frac{dk_{N}\left(  x,x\right)
}{dx}\right\vert \leq\varepsilon
\end{equation*}
where
\begin{equation}
k_{N}\left(  x,t\right)  :=\displaystyle{a_{0}U_{0}\left(  x,t\right)  +}%
\sum_{\kappa=1}^{N}{a_{\kappa}U_{4\kappa-1}\left(  x,t\right)  }. \label{kN}%
\end{equation}

\end{Proposition}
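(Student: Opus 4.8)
The plan is to reduce the claimed uniform approximation of $\frac{dk(x,x)}{dx}$ to a M\"untz-type completeness statement on $[0,b]$, which is exactly Lemma~\ref{Lemma Muntz}. First I would record, using Proposition~\ref{Prop Goursat problen k}(2) together with the assumption (\ref{q0=0}), that $k(x,x)\in C_0^1([0,b])$: indeed from (\ref{dk/dx}) we have $\frac{dk(x,x)}{dx}=\tfrac12 q(x)-\frac{d}{dx}\int_0^x K^2(t,x)\,dt$, which vanishes at $x=0$ because $q(0)=0$ and the integral term is $O(x)$ (its $x$-derivative tends to $0$ as $x\to0$ since $K$ is continuous and $K(0,0)=0$); and $k(0,0)=0$ by (\ref{k(0,t)}). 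Hence $\frac{dk(x,x)}{dx}\in C_0([0,b])$.

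Next I would compute the traces on $t=x$ of the building blocks. From (\ref{U0}), $U_0(x,x)=C_0^2 x^{2l+2}$, so $\frac{dU_0(x,x)}{dx}=\bar C_0 x^{2l+1}$ with $\bar C_0=(2l+2)C_0^2\neq0$; and from (\ref{Eq1.21}), $U_{4n-1}(x,x)$ is a nonzero constant multiple of $x^{2n+2l+2}$ (the sum of the coefficients is positive, being a sum of products of Gamma functions of positive arguments), so $\frac{dU_{4n-1}(x,x)}{dx}=\bar C_n x^{2(n+l)+1}$ with $\bar C_n\neq0$. Therefore approximating $\frac{dk(x,x)}{dx}$ by linear combinations of $\{\frac{dU_0(x,x)}{dx}\}\cup\{\frac{dU_{4n-1}(x,x)}{dx}\}_{n\ge1}$ is the same as approximating it by linear combinations of the powers $\{x^{P_j}\}$ with exponents $P_0=2l+1$ and $P_n=2(n+l)+1=2n+2l+1$ for $n\ge1$.

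To invoke Lemma~\ref{Lemma Muntz} I must check its hypotheses for the sequence $\{P_n\}_{n\ge0}$ (after dropping $n=0$, which is harmless for completeness, or keeping it). The $P_n$ are distinct positive numbers (positivity uses $l\ge-\tfrac12$, which gives $P_0=2l+1\ge0$; if $l=-\tfrac12$ then $P_0=0$ and I would simply start the sequence at $P_1=2$); clearly $P_n\to\infty$; and $\sum_{n\ge1}\frac1{P_n}=\sum_{n\ge1}\frac1{2n+2l+1}=\infty$ by comparison with the harmonic series. Moreover $P_n=2n+2l+1>1$ for all $n\ge1$ since $l\ge-\tfrac12$. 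Hence by the second part of Lemma~\ref{Lemma Muntz} the system $\{x^{P_n}\}_{n\ge1}$ is complete in $C_0^1([0,b])$, and in particular $\{x^{P_n-1}\}=\{\frac{dU_{4n-1}(x,x)}{dx}\}$ spans a dense subspace of $C_0([0,b])$ --- or, applying the first part directly to the exponents $P_n-1=2n+2l$, the system $\{x^{2n+2l}\}_{n\ge1}$ is complete in $C_0([0,b])$ (here $2n+2l\ge2(1)+2(-\tfrac12)=1>0$, and the reciprocal sum diverges). Either way, since $\frac{dk(x,x)}{dx}\in C_0([0,b])$, for every $\varepsilon>0$ there are $N$ and coefficients $a_0,\dots,a_N$ with $\bigl\|\frac{dk(x,x)}{dx}-\sum_{\kappa} a_\kappa \frac{dU_{4\kappa-1}(x,x)}{dx}\bigr\|_{C[0,b]}\le\varepsilon$; rescaling the $a_\kappa$ and recognizing $\frac{d}{dx}k_N(x,x)$ with $k_N$ as in (\ref{kN}) finishes the proof.

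The only genuinely delicate point is verifying $k(x,x)\in C_0^1([0,b])$, i.e. that $\frac{dk(x,x)}{dx}$ is continuous up to $0$ and vanishes there; continuity on $[0,b]$ is already Proposition~\ref{Prop Goursat problen k}(2), and the value at $0$ is where assumption (\ref{q0=0}) enters together with the boundary behavior $K(0,0)=0$ of the (extended) transmutation kernel. Everything else is the bookkeeping of identifying the traces with powers of $x$ and a routine application of M\"untz's theorem via Lemma~\ref{Lemma Muntz}.
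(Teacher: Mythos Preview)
Your proposal is correct and follows essentially the same route as the paper: verify $k(x,x)\in C_0^1([0,b])$ via (\ref{dk/dx}) and the assumption $q(0)=0$, identify the diagonal traces of $U_0$ and $U_{4n-1}$ with powers of $x$, and apply the M\"untz completeness Lemma~\ref{Lemma Muntz}. One small indexing slip: having defined $P_n=2n+2l+1$ as the exponent of $\tfrac{d}{dx}U_{4n-1}(x,x)$, you then write $\{x^{P_n-1}\}=\{\tfrac{d}{dx}U_{4n-1}(x,x)\}$ and speak of ``the exponents $P_n-1=2n+2l$'' --- both should read $P_n$, not $P_n-1$; the correct direct application of the first part of Lemma~\ref{Lemma Muntz} is to the exponents $P_n=2n+2l+1$ themselves (positive for $n\ge1$ since $l\ge-\tfrac12$, with divergent reciprocal sum), which gives completeness of $\{x^{P_n}\}_{n\ge1}$ in $C_0([0,b])$ and finishes the argument.
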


\begin{Proof}
Due to Lemma \ref{Lemma Muntz} it is sufficient to show that $k\left(
x,x\right)  \in C_{0}^{1}\left(  \left[  0,b\right]  \right)  $, since the
traces of the generalized wave polynomials ${U_{0}\left(  x,t\right)  }$,
${U_{4\kappa-1}\left(  x,t\right)  }$, $k=1,2,\ldots$ on the line $t=x$
satisfy the conditions of Lemma \ref{Lemma Muntz}. From Property 3. of
Proposition \ref{Prop Goursat problen k} we have that $k\left(  0,0\right)
=0$. Moreover, from (\ref{dk/dx}) we obtain that $\frac{dk\left(  0,0\right)
}{dx}=\frac{1}{2}q\left(  0\right)  =0$. Thus, indeed $k\left(  x,x\right)
\in C_{0}^{1}\left(  \left[  0,b\right]  \right)  $, and the proof follows
from Lemma \ref{Lemma Muntz}.
\end{Proof}

Next step is to prove that the uniform approximation of the preimage of the
transmutation kernel on the line $t=x$ implies its uniform approximation on
the triangle $0\leq x\leq t\leq b$. This means the stability of the Goursat
problem appearing in Proposition \ref{Prop Goursat problen k}. First, in the
following statement, using the result from \cite{Volk} we write down its solution.

\begin{Proposition}
The unique solution in the triangle $0< x\leq t\leq b$ of the Goursat problem
\begin{gather}
\left(  \square-\frac{l\left(  l+1\right)  }{x^{2}}+\frac{l\left(  l+1\right)
}{t^{2}}\right)  H\left(  x,t\right)  =0,\qquad l\geq-\frac{1}{2}%
\label{3.18}\\
\frac{dH\left(  x,x\right)  }{dx}=\frac{h\left(  x\right)  }{2},\label{3.19}\\
H\left(  0,t\right)  =0,\label{3.19B}%
\end{gather}
where $h\left(  x\right)  $ is a continuous function, is given by the formula
\begin{equation}
H\left(  x,t\right)  =\left(  z-s\right)  ^{-l}u\left(  z,s\right)  ,
\label{3.20}%
\end{equation}
with $z=\frac14(x+t)^{2}$, $s=\frac14(t-x)^{2}$ and $u$ being defined by
\begin{equation}
u\left(  \xi,\eta\right)  =\frac{1}{4}\int_{0}^{\xi}{v\left(  z,0;\xi
,\eta\right)  h\left(  \sqrt{z}\right)  z^{l-\frac{1}{2}}}dz \label{3.21}%
\end{equation}
where $v$ is the Riemann function of the problem admitting an explicit
representation given in Appendix \ref{Appendix}.
\end{Proposition}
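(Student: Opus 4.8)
The plan is to reduce the singular equation \eqref{3.18} to a classical Euler--Poisson--Darboux equation and then apply Riemann's integration method. First I would pass to the characteristic-type variables $z=\frac14(x+t)^2$, $s=\frac14(t-x)^2$, equivalently $x=\sqrt z-\sqrt s$, $t=\sqrt z+\sqrt s$, which map the triangle $0<x\le t\le b$ onto $\{\,0\le s<z,\ \sqrt z+\sqrt s\le b\,\}$, the diagonal $t=x$ onto $s=0$, and the side $x=0$ onto the diagonal $z=s$. Writing $\square=-4\,\partial_{\alpha\beta}$ in the intermediate variables $\alpha=x+t$, $\beta=t-x$ and using $\partial_\alpha=\sqrt z\,\partial_z$, $\partial_\beta=\sqrt s\,\partial_s$ together with the identities $z-s=xt$ and $t^2-x^2=4\sqrt{zs}$, a direct computation shows that $\widetilde H(z,s):=H(x,t)$ satisfies $\widetilde H_{zs}+l(l+1)(z-s)^{-2}\widetilde H=0$.

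Next I would substitute $\widetilde H=(z-s)^{-l}u$; since $-l$ is a root of the indicial equation $\rho(\rho-1)=l(l+1)$, the singular zero-order term cancels and $u$ solves the Euler--Poisson--Darboux equation
\[
u_{zs}+\frac{l}{z-s}\,u_z-\frac{l}{z-s}\,u_s=0,
\]
whose Riemann function $v(z,s;\xi,\eta)$ is classically expressible through the Gauss hypergeometric function; I would record this $v$ and defer the verification of its defining properties and of its near-diagonal estimate $v=O\!\left((z-s)^{1/2+l-\varepsilon}\right)$ to Appendix \ref{Appendix}. The data translate as follows. The condition $H(0,t)=0$ says that $\widetilde H=(z-s)^{-l}u$ extends continuously by zero to the line $z=s$, which, since $H$ must be continuous up to $x=0$, forces $u$ to vanish there to the order given by that estimate. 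Integrating $\frac{dH(x,x)}{dx}=\tfrac12 h(x)$ from $H(0,0)=0$ and using $H(x,x)=z^{-l}u(z,0)$ with $z=x^2$ yields $u(z,0)=z^{l}\int_0^{\sqrt z}\tfrac12 h(\tau)\,d\tau$, so that $u$ is fully prescribed on the characteristic $s=0$.

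Then I would apply Riemann's method: integrating the bilinear (Riemann--Green) identity for the above operator over the region bounded by the backward characteristics through $(\xi,\eta)$, the segment $s=0$, and the diagonal $z=s$, and using that $u\equiv0$ on $z=s$ and the characteristic transport equations satisfied by $v$, the boundary terms telescope and produce precisely the representation \eqref{3.21}; formula \eqref{3.20} then follows on returning to $\widetilde H$ and $H$. One must then check that the $H$ so obtained is genuinely a solution: it satisfies \eqref{3.18} because $v$ does, \eqref{3.19} by construction of $u(z,0)$, and \eqref{3.19B} together with the required limit $\lim_{x\to0}H(x,t)x^l=0$ for $-\tfrac12\le l<0$ by the near-diagonal asymptotics of $v$. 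Uniqueness follows because the difference of two solutions solves the homogeneous problem (with $h\equiv0$, hence also $H\equiv0$ on $t=x$), which transformed to $u$ is a homogeneous characteristic problem; Riemann's representation, or the Picard-iteration estimate for the equivalent Volterra integral equation, then forces $u\equiv0$.

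The chain-rule reductions are routine; the genuine difficulty --- and the reason the statement leans on \cite{Volk} and on Appendix \ref{Appendix} --- is the analysis of the Euler--Poisson--Darboux Riemann function near the singular line $z=s$: its explicit closed form, the convergence of the integral \eqref{3.21}, and the verification that the reconstructed kernel has the claimed continuity and the vanishing at $x=0$. That is the step I expect to be the main obstacle.
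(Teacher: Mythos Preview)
Your proposal is correct and follows essentially the same route as the paper: pass to the variables $z,s$, set $H=(z-s)^{-l}u$ to obtain the Euler--Poisson--Darboux equation \eqref{3.22}, translate the Goursat data to $s=0$ and to the diagonal $z=s$, and solve by Riemann's method with the analysis of $v$ deferred to Appendix~\ref{Appendix}. The only cosmetic difference is that the paper records the transformed diagonal condition in the differential form \eqref{3.23} (i.e.\ $\partial_z u-\tfrac{l}{z}u=\tfrac14 h(\sqrt z)\,z^{l-1/2}$ on $s=0$) rather than your integrated form $u(z,0)=\tfrac12 z^{l}\int_0^{\sqrt z}h$, which is the shape in which the right-hand side actually appears in \eqref{3.21}.
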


\begin{Proof}
The change of variables $z=\frac{1}{4}\left(  x+t\right)  ^{2}$, $s=\frac
{1}{4}\left(  t-x\right)  ^{2}$ transforms (\ref{3.18})--(\ref{3.19B}) into
the following boundary value problem for $u\left(  z,s\right) $
\begin{gather}
\frac{\partial^{2}u}{\partial z\partial s}+\frac{l}{z-s}\frac{\partial
u}{\partial z}-\frac{l}{z-s}\frac{\partial u}{\partial s}=0,\label{3.22}\\
\frac{\partial u}{\partial z}-\frac{lu}{z}=\frac{1}{4}h\left(  \sqrt
{z}\right)  z^{l-\frac{1}{2}},\qquad s=0,\label{3.23}\\
u\left(  z,z\right)  =0. \label{3.24}%
\end{gather}
We denote by $Z$ the domain of this boundary problem, given by $0\le s\le z\le
b^{2}$, $\sqrt{s}+\sqrt{z}\le b$. Note that (\ref{3.23}) is obtained from
(\ref{3.19}). Indeed, differentiation of $H\left(  x,x\right)  =z^{-l}u\left(
z,0\right)  $ with respect to $x$ gives
\[
\frac{d}{dz}\left[  z^{-l}u\left(  z,0\right)  \right]  \frac{dz}{dx}%
=-2\sqrt{z}\left[  lz^{-\left(  l+1\right)  }u\left(  z,0\right)  -z^{-l}%
\frac{\partial u\left(  z,0\right)  }{\partial z}\right]  =-\frac{2lu\left(
z,0\right)  }{z^{l+\frac{1}{2}}}+\frac{2}{z^{l-\frac{1}{2}}}\frac{\partial
u\left(  z,0\right)  }{\partial z}.
\]
On the other hand, $h\left(  x\right)  =h\left(  \sqrt{z}\right)  $. Thus,
\[
\frac{1}{z^{l-\frac{1}{2}}}\frac{\partial u\left(  z,0\right)  }{\partial
z}-\frac{lu\left(  z,0\right)  }{z^{l+\frac{1}{2}}}=\frac{1}{4}h\left(
\sqrt{z}\right)  .
\]
Multiplication by $z^{l-\frac{1}{2}}$ gives us (\ref{3.23}). (\ref{3.24})
follows directly from (\ref{3.19B}). Solution of (\ref{3.22})--(\ref{3.24}) by
the Riemann function method (see \cite{Volk} for details) leads to the
integral (\ref{3.21}).
\end{Proof}

In the following auxiliary statement the stability of the Goursat problem
(\ref{3.18}), (\ref{3.19}) is established.

\begin{Proposition}
\label{Prop Stability Goursat}There exists a constant $C>0$ such that if $H$
is a solution of \eqref{3.18}--\eqref{3.19B} with $h$ in \eqref{3.19}
satisfying $\left\Vert h\right\Vert _{C\left(  \left[  0,b\right]  \right)
}\leq\varepsilon$, then
\[
\max_{0\leq x\leq t\leq b}\left\vert H\left(  x,t\right)  \right\vert
\leq\varepsilon C.
\]

\end{Proposition}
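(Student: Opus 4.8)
The plan is to exploit the explicit representation of the solution $H$ in terms of the Riemann function established in the preceding proposition. By formula (\ref{3.20}) we have $H(x,t)=(z-s)^{-l}u(z,s)$ with $z=\frac14(x+t)^2$, $s=\frac14(t-x)^2$, and $u$ given by the integral (\ref{3.21}). Since $z-s=xt$, the singular factor $(z-s)^{-l}=(xt)^{-l}$ is the only source of possible blow-up, so the key is to show that $u(z,s)$ vanishes fast enough near the boundary $x=0$ (equivalently near $z=s$) to compensate, and that all the relevant quantities are controlled linearly by $\|h\|_{C[0,b]}=:\varepsilon$.

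First I would recall from Appendix~\ref{Appendix} the explicit form of the Riemann function $v(z_0,s_0;\xi,\eta)$ of the hyperbolic problem (\ref{3.22}) and record the uniform bound $|v(z,0;\xi,\eta)|\le M$ for all arguments ranging over the compact domain $Z$ (the Riemann function is continuous on its domain, and by its explicit representation it is in fact bounded; here one uses $l\ge -\tfrac12$). Then from (\ref{3.21}),
\[
|u(\xi,\eta)|\le \frac{M}{4}\,\varepsilon\int_0^{\xi} z^{l-\frac12}\,dz=\frac{M\varepsilon}{4}\cdot\frac{\xi^{l+\frac12}}{l+\frac12},
\]
valid for $l>-\tfrac12$; for $l=-\tfrac12$ the factor $z^{l-1/2}=z^{-1}$ is handled separately, using the fact (needed from the appendix) that $v(z,0;\xi,\eta)$ itself carries a compensating factor vanishing as $z\to 0$, so that the integrand remains integrable and the resulting bound is $|u(\xi,\eta)|\le M'\varepsilon$. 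In either case we obtain $|u(z,s)|\le M''\varepsilon\, z^{l+1/2}$ on $Z$, uniformly.

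Next I would combine this with the singular prefactor. Using $z-s=xt$ and $z=\frac14(x+t)^2\le \frac14(2t)^2=t^2$ (since $0\le x\le t$), we get $z^{l+1/2}\le t^{2l+1}$ when $l\ge -\tfrac12$, and hence
\[
|H(x,t)|=(xt)^{-l}|u(z,s)|\le M''\varepsilon\,(xt)^{-l}\,z^{l+\frac12}.
\]
When $l\ge 0$ one bounds $z^{l+1/2}=z^{l}z^{1/2}$, and writes $z^l\le (xt\cdot(\text{something bounded}))$ — more precisely $z\le \frac14(x+t)^2$ and $xt\le z$, so $(xt)^{-l}z^{l}\le (z/(xt))^{l}$; since $z/(xt)=\frac{(x+t)^2}{4xt}\ge 1$ this ratio is not bounded as $x\to0$, so the naive split fails and one must instead keep the cancellation intact: write $(xt)^{-l}z^{l+1/2}=z^{1/2}\bigl(z/(xt)\bigr)^{l}$, which still blows up. Therefore the correct route is to use the sharper vanishing of $u$ near $z=s$, namely $|u(z,s)|\le M''\varepsilon\,(z-s)^{l+1/2}$ — this is the genuine content hiding in the Riemann-function estimate and is exactly the regularity $v(z,s)=O((z-s)^{1/2+l-\varepsilon'})$ mentioned in the Remark after Theorem~\ref{Th Kernel K}. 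With that, $|H(x,t)|\le M''\varepsilon\,(z-s)^{-l}(z-s)^{l+1/2}=M''\varepsilon\,(z-s)^{1/2}=M''\varepsilon\,(xt)^{1/2}\le M''\varepsilon\,b$, giving the claim with $C:=M''b$.

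The main obstacle is establishing the refined estimate $|u(\xi,\eta)|\le \mathrm{const}\cdot\varepsilon\,(\xi-\eta)^{l+1/2}$ (vanishing near the characteristic $\xi=\eta$) rather than merely $|u(\xi,\eta)|\le\mathrm{const}\cdot\varepsilon\,\xi^{l+1/2}$; this requires looking carefully at how the Riemann function $v(z,0;\xi,\eta)$ behaves as $\eta\to\xi$ and verifying, from its explicit hypergeometric-type representation in the appendix, that the integral (\ref{3.21}) picks up the factor $(\xi-\eta)^{l+1/2}$. Once that decay is in hand the rest is bookkeeping: the dependence on $h$ is linear by construction, and all bounds are over the compact triangle, so the constant $C$ is uniform and independent of $h$.
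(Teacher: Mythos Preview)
Your overall architecture eventually converges to the paper's: write $H=(z-s)^{-l}u$, establish $|u(\xi,\eta)|\le C\varepsilon(\xi-\eta)^{l+1/2}$, and conclude $|H|\le C\varepsilon(z-s)^{1/2}\le C\varepsilon b$. That final chain is exactly what the paper does.

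There is, however, a genuine gap and a false step along the way. Your first attempt --- bounding $|v(z,0;\xi,\eta)|\le M$ uniformly on the compact domain --- is not just insufficient, it is false: the appendix states explicitly that $v$ is continuous \emph{except} on the line $z=\eta$, and the formula for $v_2$ carries a factor $(\eta-z)^{-(l+1)}$ that blows up there (and for $l<0$, $v_1$ also blows up as $z\to\eta^+$). So there is no constant $M$ as you posit, and the crude bound $|u(\xi,\eta)|\le M''\varepsilon\,\xi^{l+1/2}$ is not available by this route. You correctly abandon it, but it should simply be removed.

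More importantly, you identify the refined estimate $|u(\xi,\eta)|\le C\varepsilon(\xi-\eta)^{l+1/2}$ as ``the main obstacle'' and then leave it unproved, saying only that it ``requires looking carefully'' at the Riemann function. In the paper this is the entire content of the proposition: the proof is one line once Corollary~\ref{Estimates Riemann 2} is invoked, but that corollary rests on the two technical lemmas of the appendix, which split the integral $\int_0^\xi |v|z^{l-1/2}\,dz$ into pieces over $(0,\eta)$ and $(\eta,\xi)$, further subdivide according to whether the argument of the hypergeometric function lies in $(-1,0)$ or $(-\infty,-1)$, and use transformation formulas for ${}_2F_1$ to extract the $(\xi-\eta)^{l+1/2}$ decay. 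Your proposal contains none of this analysis; as written it is a correct outline whose hard step is deferred rather than done.
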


\begin{proof}
Consider the solution $H$ of the Goursat problem defined in terms of $u$ by
(\ref{3.20}).  Due to Corollary \ref{Estimates Riemann 2} from Appendix \ref{Appendix} we have that for $\left\Vert
h\right\Vert _{C\left(  \left[  0,b\right]  \right)  }\leq\varepsilon$, the
function $u$ defined by (\ref{3.21}) can be bounded as follows
\begin{equation*}
\left\vert u\left(  \xi,\eta\right)  \right\vert \leq\frac{\varepsilon(A_1+A_2)}4\left(
\xi-\eta\right)  ^{l+1/2}.
\end{equation*}
Denote by $A:=(A_{1}+A_{2})/4$. Hence from (\ref{3.20}) we obtain
\[
\begin{split}
\max_{0\leq x\leq t\leq b}\left\vert H\left(  x,t\right)  \right\vert
&=\max_{(z,s)\in Z}\vert \left(  z-s\right)
^{-l} u\left(  z,s\right)  \vert
\leq\max_{(z,s)\in Z}\left\{  A\varepsilon| \left(  z-s\right)
^{-l}| \left(  z-s\right)  ^{l+1/2}\right\}\\
&\le\varepsilon A\max_{0\le s\le z \le b^2}\left(  z-s\right)
^{1/2}=\varepsilon A b.
\end{split}
\]
\end{proof}

\begin{Proposition}
\label{Prop Approx k}Let $\varepsilon>0$. There exist such $N\in\mathbb{N}$
and $a_{0},a_{1},\ldots,a_{N}\in\mathbb{C}$ that the inequality
\[
\left\vert k\left(  x,t\right)  -k_{N}\left(  x,t\right)  \right\vert
\leq\varepsilon
\]
is fulfilled for all $0\le x\le t\le b$, where $k_{N}\left(  x,t\right)  $ is
of the form \eqref{kN}.
\end{Proposition}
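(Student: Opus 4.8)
The plan is to combine the approximation on the diagonal $t=x$ from Proposition \ref{Prop k(x,x) approx} with the stability of the Goursat problem from Proposition \ref{Prop Stability Goursat}. First I would observe that, by Proposition \ref{Prop Goursat problen k}, the preimage $k(x,t)$ solves the Goursat problem \eqref{3.18}--\eqref{3.19B} in the triangle $0<x\le t\le b$ with $h(x)=dk(x,x)/dx\in C([0,b])$. Likewise, each generalized wave polynomial $U_{0}$ and $U_{4\kappa-1}$ is, by Proposition \ref{Prop Gen Polyn U}, a solution of \eqref{3.18} with $U(0,t)=0$; consequently the linear combination $k_{N}(x,t)$ defined by \eqref{kN} also solves \eqref{3.18} and satisfies $k_{N}(0,t)=0$, with diagonal data $dk_{N}(x,x)/dx$.

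Next I would set $H(x,t):=k(x,t)-k_{N}(x,t)$. By linearity of \eqref{3.18} and \eqref{3.19B}, the difference $H$ is itself a solution of the Goursat problem \eqref{3.18}--\eqref{3.19B} in $0<x\le t\le b$, with the diagonal datum
\begin{equation*}
h(x)=2\,\frac{d}{dx}\bigl(k(x,x)-k_{N}(x,x)\bigr)=2\left(\frac{dk(x,x)}{dx}-\frac{dk_{N}(x,x)}{dx}\right).
\end{equation*}
Given $\varepsilon>0$, apply Proposition \ref{Prop k(x,x) approx} with the tolerance $\varepsilon/(2C)$, where $C$ is the stability constant of Proposition \ref{Prop Stability Goursat}; this yields $N\in\mathbb{N}$ and coefficients $a_{0},\dots,a_{N}\in\mathbb{C}$ such that $\bigl|dk(x,x)/dx-dk_{N}(x,x)/dx\bigr|\le\varepsilon/(2C)$ for all $x\in[0,b]$, hence $\|h\|_{C([0,b])}\le\varepsilon/C$.

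Finally, Proposition \ref{Prop Stability Goursat} applied to $H$ with this bound on $h$ gives $\max_{0\le x\le t\le b}|H(x,t)|\le C\cdot(\varepsilon/C)=\varepsilon$, which is exactly the claimed inequality $|k(x,t)-k_{N}(x,t)|\le\varepsilon$ on $0\le x\le t\le b$. The only point requiring a little care — and the main obstacle, such as it is — is ensuring that $H$ genuinely falls under the hypotheses of Proposition \ref{Prop Stability Goursat}: namely that its diagonal datum is continuous on all of $[0,b]$ (which holds because both $dk(x,x)/dx$, by Property 2 of Proposition \ref{Prop Goursat problen k}, and $dk_{N}(x,x)/dx$, being a polynomial combination, are continuous) and that $H$ is the solution of the Goursat problem, i.e.\ that the uniqueness statement lets us identify $k-k_{N}$ with the Goursat solution built from $h$ in Proposition \ref{Prop Stability Goursat}. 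Both are immediate from the constructions already in place, so the argument is short once these identifications are noted.
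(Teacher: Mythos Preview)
Your proposal is correct and follows essentially the same approach as the paper's own proof: form the difference $H=k-k_N$, note it solves the Goursat problem \eqref{3.18}--\eqref{3.19B} with diagonal datum controlled by Proposition~\ref{Prop k(x,x) approx}, and then invoke the stability estimate of Proposition~\ref{Prop Stability Goursat}. Your handling of the factor of $2$ coming from \eqref{3.19} and your explicit remark on identifying $H$ with the Goursat solution via uniqueness are, if anything, slightly more careful than the paper's version.
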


\begin{proof}
Take $\varepsilon_{1}:=\varepsilon/C$ where $C$ is the constant from
Proposition \ref{Prop Stability Goursat}. Due to Proposition
\ref{Prop k(x,x) approx} there exist $N\in\mathbb{N}$ and $a_{0},a_{1}%
,\ldots,a_{N}$ such that $\left\vert \frac{dU_{\varepsilon}\left(  x,x\right)
}{dx}\right\vert \leq\varepsilon_{1}$ where  $U_{\varepsilon}\left(
x,t\right)  :=k\left(  x,t\right)  -k_{N}\left(  x,t\right)  $.  Since by
Proposition 4.1 every generalized wave polynomial $U_{0}\left(  x,t\right)  $,
$U_{4n-1}\left(  x,t\right)  $, $n=1,2,\ldots$ satisfies equation
(\ref{Wave sing no q}), and $U_{0}\left(  0,t\right)  =0$, $U_{4n-1}\left(
0,t\right)  =0$, $n=1,2,\ldots$,   $U_{\varepsilon}\left(  x,t\right)  $ also
satisfies equation (\ref{Wave sing no q}) and boundary condition
(\ref{k(0,t)}). Due to Proposition \ref{Prop Stability Goursat} we obtain
$\max_{0\leq x\leq t\leq b}\left\vert U_{\varepsilon}\left(  x,t\right)
\right\vert \leq\varepsilon$.
\end{proof}

\subsection{Approximation of the integral kernel $K\left(  x,t\right)  $}

\begin{Theorem}
\label{Th Approx K}For any $\varepsilon>0$ there exist such $N\in\mathbb{N}$
and $a_{0},a_{1},\ldots,a_{N}\in\mathbb{C}$ that the inequality
\begin{equation*}
\left\vert K\left(  x,t\right)  -K_{N}\left(  x,t\right)  \right\vert
\leq\varepsilon,
\end{equation*}
holds for all $0\leq x\leq t\leq b$, where
\begin{equation}
K_{N}\left(  x,t\right)  :=\sum_{\kappa=0} ^{N}{a_{\kappa}u_{\kappa}\left(
x,t\right)  .}\label{3.27}%
\end{equation}
In particular,
\begin{equation}
\left\vert \frac{1}{2}\int_{0}^{x}{q\left(  s\right)  }ds-\sum_{\kappa=0}
^{N}{a_{\kappa}}\mathbf{c}_{\kappa}\left(  x\right)  \right\vert
\leq\varepsilon\label{Q approx}%
\end{equation}
for all $x\in\left[  0,b\right]  $.
\end{Theorem}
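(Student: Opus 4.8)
The plan is to transfer the approximation of the preimage $k$ established in Proposition \ref{Prop Approx k} through the operator $\mathbf{T}$ to obtain an approximation of $K$ itself. Recall that $K\left(x,t\right)=\mathbf{T}\left[k\left(x,t\right)\right]$ with $\mathbf{T}$ acting in the first variable, and that by definition \eqref{u_n} we have $u_{\kappa}\left(x,t\right)=\mathbf{T}\left[U_{4\kappa-1}\left(x,t\right)\right]$ for $\kappa\geq 1$ and $u_{0}\left(x,t\right)=\mathbf{T}\left[U_{0}\left(x,t\right)\right]$. Hence with the same coefficients $a_{0},\ldots,a_{N}$ furnished by Proposition \ref{Prop Approx k} for the function $k_{N}$ of the form \eqref{kN}, linearity of $\mathbf{T}$ gives $K_{N}\left(x,t\right)=\mathbf{T}\left[k_{N}\left(x,t\right)\right]=\sum_{\kappa=0}^{N}a_{\kappa}u_{\kappa}\left(x,t\right)$, which is exactly \eqref{3.27}. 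Therefore $K\left(x,t\right)-K_{N}\left(x,t\right)=\mathbf{T}\left[k\left(x,t\right)-k_{N}\left(x,t\right)\right]$, again in the variable $x$.

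The first main step is thus to apply Proposition \ref{Prop Approx k} with $\varepsilon$ replaced by a suitable $\varepsilon_{1}$, producing $N$ and $a_{0},\ldots,a_{N}$ with $\left\vert k\left(x,t\right)-k_{N}\left(x,t\right)\right\vert\leq\varepsilon_{1}$ for all $0\leq x\leq t\leq b$. The second step is to estimate $\left\vert\mathbf{T}\left[k-k_{N}\right]\left(x,t\right)\right\vert$ using the explicit Volterra form \eqref{T}: for fixed $t$, regarding $g\left(x\right):=k\left(x,t\right)-k_{N}\left(x,t\right)$ as a function of $x\in\left[0,b\right]$ (it is continuous and, for $0\le x\le t$, bounded by $\varepsilon_{1}$), we have
\begin{equation*}
\left\vert\mathbf{T}\left[g\right]\left(x\right)\right\vert\leq\left\vert g\left(x\right)\right\vert+\int_{0}^{x}\left\vert K\left(x,s\right)\right\vert\left\vert g\left(s\right)\right\vert\,ds\leq\varepsilon_{1}\left(1+b\max_{0\le s\le x\le b}\left\vert K\left(x,s\right)\right\vert\right).
\end{equation*}
Setting $M:=1+b\,\max\left\vert K\right\vert$ (finite since $K$ is continuous on the compact square) and choosing $\varepsilon_{1}:=\varepsilon/M$ yields $\left\vert K\left(x,t\right)-K_{N}\left(x,t\right)\right\vert\leq\varepsilon$ for all $0\leq x\leq t\leq b$, which is the first assertion. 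One small point to check is that the domain of integration $s\in\left[0,x\right]$ with $x\le t$ keeps us inside the region $0\le s\le t$ where the bound $\left\vert g\left(s\right)\right\vert\le\varepsilon_{1}$ holds, so no issue arises from the extended kernel.

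The third step is to deduce the consequence \eqref{Q approx}. Setting $t=x$ in the first inequality gives $\left\vert K\left(x,x\right)-K_{N}\left(x,x\right)\right\vert\leq\varepsilon$. Now $K\left(0,0\right)=0$ and $\frac{dK\left(x,x\right)}{dx}=\frac{q\left(x\right)}{2}$ by property 2 of Theorem \ref{Th Kernel K}, so integrating gives $K\left(x,x\right)=\frac{1}{2}\int_{0}^{x}q\left(s\right)\,ds$. On the other hand, $K_{N}\left(x,x\right)=\sum_{\kappa=0}^{N}a_{\kappa}u_{\kappa}\left(x,x\right)$, and by the definition \eqref{cn def} of the traces $\mathbf{c}_{\kappa}$ together with the explicit formula \eqref{u_n explicit} one has $u_{\kappa}\left(x,x\right)=\mathbf{c}_{\kappa}\left(x\right)$ up to the normalizing constants already absorbed in \eqref{cn def}; substituting yields $K_{N}\left(x,x\right)=\sum_{\kappa=0}^{N}a_{\kappa}\mathbf{c}_{\kappa}\left(x\right)$, and \eqref{Q approx} follows. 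The only genuinely delicate point in the whole argument is bookkeeping: making sure the normalization constants in \eqref{cn def} are precisely those that make $u_{\kappa}\left(x,x\right)=\mathbf{c}_{\kappa}\left(x\right)$ (so that the coefficients $a_{\kappa}$ transfer verbatim), and confirming that Proposition \ref{Prop Approx k} indeed delivers coefficients attached to the same indexed family $\left\{U_{0},U_{4\kappa-1}\right\}$ whose $\mathbf{T}$-images are the $u_{\kappa}$. Everything else is a one-line Gronwall-type estimate for a Volterra operator with bounded kernel.
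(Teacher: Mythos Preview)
Your proof is correct and follows essentially the same approach as the paper: apply Proposition \ref{Prop Approx k} with tolerance $\varepsilon_{1}=\varepsilon/\left\Vert \mathbf{T}\right\Vert$, then push the approximation through the bounded operator $\mathbf{T}$ to obtain $\left\Vert K-K_{N}\right\Vert =\left\Vert \mathbf{T}(k-k_{N})\right\Vert \leq\left\Vert \mathbf{T}\right\Vert \varepsilon_{1}=\varepsilon$, and finally read off \eqref{Q approx} from $K(x,x)=\tfrac{1}{2}\int_{0}^{x}q$. The only difference is cosmetic: the paper invokes the operator norm $\left\Vert \mathbf{T}\right\Vert$ abstractly, whereas you compute it explicitly as $M=1+b\max\left\vert K\right\vert$; your extra remark that the integration variable $s\in[0,x]\subset[0,t]$ stays inside the triangle where the bound on $k-k_{N}$ is available is a worthwhile sanity check that the paper leaves implicit. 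Regarding your bookkeeping worry, one can verify from \eqref{u_n explicit} and the duplication formula for the Gamma function that indeed $u_{\kappa}(x,x)=\mathbf{c}_{\kappa}(x)$ exactly, so the coefficients transfer without adjustment.
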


\begin{Proof}
Let $\left\Vert \mathbf{T}\right\Vert $ denote the uniform norm of the
operator $\mathbf{T}$. Choose $\varepsilon>0$ \ and consider $\varepsilon
_{1}:=\varepsilon/\left\Vert \mathbf{T}\right\Vert $. Due to Proposition
\ref{Prop Approx k} there exist such $N\in\mathbb{N}$ and $a_{0},a_{1}
,\ldots,a_{N}\in\mathbb{C}$ that the inequality
\[
\left\Vert k-k_{N}\right\Vert :=\max_{0\leq x\leq t\leq b}\left\vert k\left(
x,t\right)  -k_{N}\left(  x,t\right)  \right\vert \leq\varepsilon_{1}
\]
holds. Consider
\[
\left\Vert K-K_{N}\right\Vert =\left\Vert \mathbf{T}\left(  k-k_{N}\right)
\right\Vert \leq\left\Vert \mathbf{T}\right\Vert \left\Vert k-k_{N}\right\Vert
\leq\varepsilon_{1}\left\Vert \mathbf{T}\right\Vert =\varepsilon.
\]
Inequality (\ref{Q approx}) follows from the last inequality as well if one
takes into account that $K\left(  x,x\right)  =\frac{1}{2}\int_{0}
^{x}{q\left(  s\right)  }ds$.
\end{Proof}

\section{Approximation of the solution}

According to Theorem \ref{Th Approx K} for any $\varepsilon>0$ there exist
such $N\in\mathbb{N}$ and $a_{0},a_{1},\ldots,a_{N}\in\mathbb{C}$ that the
inequality (\ref{Q approx}) holds. We will assume that the coefficients
$a_{0},a_{1},\ldots,a_{N}$ \ are thus chosen and denote by $K_{N}$ the
corresponding approximate transmutation kernel (\ref{3.27}). Denote
$q_{N}(x):=\frac{2dK_{N}(x,x)}{dx}$ . Note that (\ref{Q approx}) implies the
inequality
\[
\left\vert \int_{0}^{x}\left(  q(t)-q_{N}(t)\right)  dt\right\vert
\leq\varepsilon.
\]
We are interested in the corresponding approximate solution
\begin{equation}
\mathbf{u}_{N}\left(  x,\lambda\right)  :=\mathbf{d}_{l}\left(  x,\lambda
\right)  +\int_{0}^{x}{K_{N}\left(  x,t\right)  \mathbf{d}_{l}\left(
t,\lambda\right)  }dt,\qquad l\ge-1/2. \label{uN}%
\end{equation}

The following statement allows one to estimate the residual of the approximate solution.

\begin{Theorem}\label{Thm Approx Sol}
For the approximate solution \eqref{uN} the following inequality is valid
\begin{equation*}
\left\vert \int_{0}^{x}{\left(  \mathbf{L}+\lambda\right)  \mathbf{u}
_{N}\left(  s\right)  }ds\right\vert \leq2\varepsilon\sqrt{x}.
\end{equation*}
\end{Theorem}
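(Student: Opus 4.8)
The plan is to reduce the estimate to the already-established approximation $\left|\int_0^x (q(s)-q_N(s))\,ds\right|\le\varepsilon$ from Theorem~\ref{Th Approx K} by computing $(\mathbf{L}+\lambda)\mathbf{u}_N$ explicitly. First I would observe that $\mathbf{u}_N=\mathbf{T}_N[\mathbf{d}_l(\cdot,\lambda)]$ where $\mathbf{T}_N$ is the Volterra operator with the approximate kernel $K_N$, and that $\mathbf{d}_l(x,\lambda)$ is the exact regular solution of $-\mathbf{L}_0 y=\lambda y$. Since each generalized wave polynomial $u_\kappa$ is an exact solution of equation~\eqref{waveq}, i.e.\ of $\left(\square-\frac{l(l+1)}{x^2}+\frac{l(l+1)}{t^2}-q(x)\right)u=0$, we would like to say that $K_N$ \emph{almost} satisfies the Goursat problem of Theorem~\ref{Th Kernel K}: it satisfies the PDE exactly and the boundary condition $K_N(x,0)=0$ exactly (from \eqref{boundary cond U} and $u_0(x,0)=0$), but its diagonal condition $\frac{dK_N(x,x)}{dx}=\frac{q_N(x)}{2}$ involves the approximate potential $q_N$ rather than $q$.

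Next I would repeat the computation in the proof of Theorem~\ref{Th Transmutation property}, applied to $\mathbf{u}_N$ rather than to $\mathbf{T}\varphi$, keeping careful track of which identities hold. Differentiating $\mathbf{u}_N$ twice and using that $\mathbf{d}_l$ solves $\mathbf{L}_0\mathbf{d}_l=-\lambda\mathbf{d}_l$ together with the integration-by-parts identity $\int_0^x K_N(x,t)\,\partial_t^2\mathbf{d}_l(t,\lambda)\,dt$ (for which $K_N(x,0)=0$ and $\mathbf{d}_l(0,\lambda)=0$ are exactly what is needed), the bulk terms cancel by the PDE satisfied by each $u_\kappa$, and one is left with the boundary contribution on the diagonal. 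Concretely, one obtains
\begin{equation*}
(\mathbf{L}+\lambda)\mathbf{u}_N(x,\lambda)=\left(2\frac{dK_N(x,x)}{dx}-q(x)\right)\mathbf{d}_l(x,\lambda)=\bigl(q_N(x)-q(x)\bigr)\mathbf{d}_l(x,\lambda),
\end{equation*}
where the last equality is the definition of $q_N$. The only subtlety here is justifying the application of the transmutation machinery of Theorem~\ref{Th Transmutation property} with $\varphi=\mathbf{d}_l(\cdot,\lambda)\in\mathcal{A}_l$ and with the exact kernel $K$ replaced by the polynomial combination $K_N$; since $K_N$ is a finite linear combination of the $u_\kappa$, each of which is built from smooth objects via $\mathbf{T}$ and $Y_{l,x}Y_{l,t}$, the regularity needed for integration by parts on $(0,x]$ is available, and the behaviour of $u_\kappa(x,t)t^l$ as $t\to0$ recorded in Proposition~5.3 (property~3) handles the singular case $-\tfrac12\le l<0$.

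Finally I would integrate: using $\mathbf{d}_l(s,\lambda)=\sqrt{s}\,\mathbf{J}_{l+1/2}(\sqrt\lambda s)$ and the elementary bound $|\mathbf{J}_\nu(\xi)|\le 1$ for $\nu\ge-\tfrac12$ and real argument (so $|\mathbf{d}_l(s,\lambda)|\le\sqrt{s}\le\sqrt{x}$ for $s\in[0,x]$ when $\lambda\in\mathbb{R}$), one gets
\begin{equation*}
\left|\int_0^x (\mathbf{L}+\lambda)\mathbf{u}_N(s,\lambda)\,ds\right|=\left|\int_0^x \bigl(q_N(s)-q(s)\bigr)\mathbf{d}_l(s,\lambda)\,ds\right|.
\end{equation*}
The cleanest route is an integration by parts that puts the derivative onto $\mathbf{d}_l$: writing $Q(s):=\int_0^s(q_N-q)$, which satisfies $|Q(s)|\le\varepsilon$ by Theorem~\ref{Th Approx K}, the integral equals $Q(x)\mathbf{d}_l(x,\lambda)-\int_0^x Q(s)\,\partial_s\mathbf{d}_l(s,\lambda)\,ds$; bounding $|Q|\le\varepsilon$ and using that $\mathbf{d}_l(\cdot,\lambda)$ is increasing in a suitable sense (or, more robustly, estimating $|\mathbf{d}_l(x,\lambda)|+\int_0^x|\partial_s\mathbf{d}_l(s,\lambda)|\,ds$) would in principle give a bound of the form $\varepsilon\cdot(\text{something involving }\sqrt x)$; however the sharp constant $2\sqrt{x}$ suggests instead simply bounding $|\mathbf{d}_l(s,\lambda)|\le\sqrt{s}$ directly and writing $\left|\int_0^x(q_N-q)\mathbf{d}_l\,ds\right|\le\sqrt{x}\,\left|\int_0^x(q_N-q)\,ds\right|+\sqrt{x}\,\varepsilon\le 2\varepsilon\sqrt{x}$ after one integration by parts in which the boundary term is $\le\sqrt{x}\,\varepsilon$ and the remaining term is controlled by $\sqrt{x}$ times $\|Q\|_\infty\le\varepsilon$. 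The main obstacle is thus not the integration estimate but the first step: cleanly establishing $(\mathbf{L}+\lambda)\mathbf{u}_N=(q_N-q)\mathbf{d}_l$ pointwise on $(0,b]$, which requires re-running the transmutation identity with the approximate kernel and verifying all boundary terms vanish — exactly the role played by the normalization $u_\kappa(x,0)=0$ and $u_\kappa(0,t)=0$ from Proposition~5.3.
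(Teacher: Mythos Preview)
Your approach is essentially the paper's. You correctly identify the key identity $(\mathbf{L}+\lambda)\mathbf{u}_N=(q_N-q)\mathbf{d}_l$, obtained by re-running the transmutation computation with the approximate kernel $K_N$ (which satisfies \eqref{waveq} exactly, vanishes at $t=0$, but has diagonal derivative $q_N/2$ rather than $q/2$), and you correctly see that the final estimate comes from one integration by parts against $Q(s)=\int_0^s(q_N-q)$. The paper's endgame is exactly your integration by parts:
\[
\int_0^x(q_N-q)\,\mathbf{d}_l\,ds=\mathbf{d}_l(x,\lambda)\,Q(x)-\int_0^x\mathbf{d}_l'(s,\lambda)\,Q(s)\,ds,
\]
followed by $|Q|\le\varepsilon$ and $|\mathbf{d}_l(x,\lambda)|\le\sqrt{x}$ (from $|\mathbf{J}_\nu|\le 1$ for $\nu\ge 0$), giving the bound $\varepsilon\bigl(\sqrt{x}+\bigl|\int_0^x\mathbf{d}_l'(s,\lambda)\,ds\bigr|\bigr)=\varepsilon\bigl(\sqrt{x}+|\mathbf{d}_l(x,\lambda)|\bigr)\le 2\varepsilon\sqrt{x}$. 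So the ``remaining term controlled by $\sqrt{x}\,\|Q\|_\infty$'' you were groping for is handled in the paper simply by evaluating $\int_0^x\mathbf{d}_l'=\mathbf{d}_l(x)$, not by estimating $\int_0^x|\mathbf{d}_l'|$.
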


\begin{Proof}
First, let us show that
\[
\left(  \mathbf{L}+\lambda\right)  \mathbf{u}_{N}\left(  x\right)  =\left[
q_{N}\left(  x\right)  -q\left(  x\right)  \right]  \mathbf{d}_{l}\left(
x,\lambda\right)  .
\]
Indeed,
\begin{align*}
\left(  \mathbf{L}+\lambda\right)  \mathbf{u}_{N}\left(  x\right)   &
=-q\left(  x\right)  \mathbf{d}_{l}\left(  x,\lambda\right)  +\int_{0}
^{x}\left(  \left(  \mathbf{L}+\lambda\right)  {K_{N}\left(  x,t\right)
}\right)  {\mathbf{d}_{l}\left(  t,\lambda\right)  }dt\\
&  \quad+\frac{\partial K_{N}\left(  x,t\right)  }{\partial x}\Big|_{t=x}
\mathbf{d}_{l}\left(  x,\lambda\right)  +\frac{dK_{N}\left(  x,x\right)  }
{dx}\mathbf{d}_{l}\left(  x,\lambda\right)  +K_{N}\left(  x,x\right)
\mathbf{d}_{l}^{\prime}\left(  x,\lambda\right)  .
\end{align*}
Since $K_{N}$ satisfies equation (\ref{waveq}) we have
\[
\int_{0}^{x}\left(  {\mathbf{L}K_{N}\left(  x,t\right)  }\right)
{\mathbf{d}_{l}\left(  t,\lambda\right)  }dt=\int_{0}^{x}{\left(
\frac{\partial^{2}K_{N}\left(  x,t\right)  }{\partial t^{2}}-\frac{l\left(
l+1\right)  }{t^{2}}K_{N}\left(  x,t\right)  \right)  \mathbf{d}_{l}\left(
t,\lambda\right)  }dt.
\]
Integration by parts gives
\begin{align*}
\int_{0}^{x}{\frac{\partial^{2}K_{N}\left(  x,t\right)  }{\partial t^{2}
}\mathbf{d}_{l}\left(  t,\lambda\right)  }dt  &  =\frac{\partial K_{N}\left(
x,t\right)  }{\partial t}\Big|_{t=x}\mathbf{d}_{l}\left(  x,\lambda\right)
-K_{N}\left(  x,x\right)  \mathbf{d}_{l}^{\prime}\left(  x,\lambda\right)
-\frac{\partial K_{N}\left(  x,t\right)  }{\partial t}\mathbf{d}_{l}\left(
t,\lambda\right) \Big|_{t=0}\\
&  +K_{N}\left(  x,t\right)  \mathbf{d}_{l}^{\prime}\left(  t,\lambda\right)
\Big|_{t=0} +\int_{0}^{x}{K_{N}\left(  x,t\right)  \mathbf{d}_{l}%
^{\prime\prime}\left(  t,\lambda\right)  }dt.
\end{align*}
Note that $K_{N}(x,t)$ has the form $t^{l+1}\sum_{n=0}^{N}\kappa_{n}(x)t^{2n}%
$, while $\mathbf{d}_{l}\left(  t,\lambda\right) \sim ct^{l+1}$ and
$\mathbf{d}^{\prime}_{l}\left(  t,\lambda\right) \sim c(l+1)t^{l}$, $t\to0$,
with $c=\frac{\lambda^{(2l+1)/4}}{\Gamma(l+3/2)2^{l+1/2}}$, leading to the
conclusion that
\[
-\frac{\partial K_{N}\left(  x,t\right)  }{\partial t}\mathbf{d}_{l}\left(
t,\lambda\right) \Big|_{t=0}+K_{N}\left(  x,t\right)  \mathbf{d}_{l}^{\prime
}\left(  t,\lambda\right) \Big|_{t=0} = 0
\]
for any $l\ge-1/2$. Thus,
\[
\int_{0}^{x}{\frac{\partial^{2}K_{N}\left(  x,t\right)  }{\partial t^{2}%
}\mathbf{d}_{l}\left(  t,\lambda\right)  }dt=\frac{\partial K_{N}\left(
x,t\right)  }{\partial t}\Big|_{t=x}\mathbf{d}_{l}\left(  x,\lambda\right)
-K_{N}\left(  x,x\right)  \mathbf{d}_{l}^{\prime}\left(  x,\lambda\right)
+\int_{0}^{x}{K_{N}\left(  x,t\right)  \mathbf{d}_{l}^{\prime\prime}\left(
t,\lambda\right)  }dt.
\]
Hence
\begin{align*}
\left(  \mathbf{L}+\lambda\right)  \mathbf{u}_{N}\left(  x\right)   &
=\left(  2\frac{dK_{N}\left(  x,x\right)  }{dx}-q\left(  x\right)  \right)
\mathbf{d}_{l}\left(  x,\lambda\right)  +\int_{0}^{x}\left(  {\frac
{d^{2}\mathbf{d}_{l}\left(  t,\lambda\right)  }{dt^{2}}-\frac{l\left(
l+1\right)  }{t^{2}}\mathbf{d}_{l}\left(  t,\lambda\right)  +\lambda
\mathbf{d}_{l}\left(  t,\lambda\right)  }\right)  {K_{N}\left(  x,t\right)
}dt\\
&  =\left(  q_{N}\left(  x\right)  -q\left(  x\right)  \right)  \mathbf{d}
_{l}\left(  x,\lambda\right)  .
\end{align*}
Consider
\[
\int_{0}^{x}\left(  q_{N}\left(  s\right)  -q\left(  s\right)  \right)
{\mathbf{d}_{l}\left(  s,\lambda\right)  }ds=\mathbf{d}_{l}\left(
x,\lambda\right)  \int_{0}^{x}\left(  {q_{N}\left(  s\right)  -q\left(
s\right)  }\right)  ds-\int_{0}^{x}{\mathbf{d}_{l}^{\prime}\left(
s,\lambda\right)  \int_{0}^{s}}\left(  {{q_{N}\left(  \sigma\right)  -q\left(
\sigma\right)  }}\right)  {d\sigma}ds
\]
Since $\left\vert \mathbf{J}_{\nu}\left(  x\right)  \right\vert \leq1$ for any
$\nu\geq0$ and all $x\in\mathbb{R}$ (see e.g., \cite[10.14.1.]{OlLoBoiCla}) we
have $\left\vert \mathbf{d}_{l}\left(  x,\lambda\right)  \right\vert \leq
\sqrt{x}$. Hence
\[
\left\vert \int_{0}^{x}{\left(  \mathbf{L}+\lambda\right)  \mathbf{u}
_{N}\left(  s\right)  }ds\right\vert =\left\vert \int_{0}^{x}\left(
{q_{N}\left(  s\right)  -q\left(  s\right)  }\right)  {\mathbf{d}_{l}\left(
s,\lambda\right)  }ds\right\vert \leq\varepsilon\left(  \sqrt{x}+\left\vert
\int_{0}^{x}{\mathbf{d}_{l}^{\prime}\left(  s,\lambda\right)  }ds\right\vert
\right)  \leq2\varepsilon\sqrt{x}.
\]

\end{Proof}

The convenience of seeking the approximate solution in the form (\ref{uN})
consists in the fact that the integral in (\ref{uN}) can be calculated
explicitly. Consider
\[
\mathbf{u}_{N}\left(  x\right)  =\mathbf{d}_{l}\left(  x,\lambda\right)
+\int_{0}^{x}\sum_{n=0} ^{N}{a_{n}u_{n}\left(  x,t\right)
}{\mathbf{d}_{l}\left(  t,\lambda\right)  }\,dt.
\]
By (\ref{u_n explicit}) we have
\begin{equation}\label{uN expanded}
\mathbf{u}_{N}\left(  x\right)  =\mathbf{d}_{l}\left(  x,\lambda\right)
+u_{0}\left(  x\right)  \sum_{n=0}^{N}{a_{n}\sum_{k=0}^{n}{\Xi_{n,k}%
\widetilde{X}^{\left(  2\left(  n-k\right)  \right)  }\left(  x\right)
\int_{0}^{x}{t^{2k+l+1}}\mathbf{d}_{l}\left(  t,\lambda\right)  }}\,dt.
\end{equation}
The integrals here can be calculated explicitly. Let $\lambda = \omega^2$. In an important special case $l\in{\mathbb{N}}$ one can use the following formula \cite[(1.8.1.6)]
{PrudBriMar}
\begin{align*}
\int{t^{\left(  2k+l+1\right)  +\frac{1}{2}}\mathbf{J}_{l+\frac{1}{2}}\left(
\omega t\right)  }dt  &  =\frac{1}{\omega^{2k+l+\frac{5}{2}}}\sqrt{\frac
{2}{\pi}}\sin\left(  \omega t-\frac{l\pi}{2}\right)  \sum_{j=0}^{\left[
\frac{2k+l}{2}\right]  }{C_{2k+l-2j}\left(  \omega t\right)  ^{2k+l-2j}}\\
&  \quad+\frac{1}{\omega^{2k+l+\frac{5}{2}}}\sqrt{\frac{2}{\pi}}\cos{\left(
\omega t-\frac{l\pi}{2}\right)  }\sum_{j=0}^{\left[  \frac{2k+l+1}{2}\right]
}{C_{2k+l+1-2j}\left(  \omega t\right)  ^{2k+l+1-2j}},
\end{align*}
where $C_{2k+l+1}=-1,$ $C_{2k+l-j}=\left(  -1\right)  ^{j}\left[
P_{j+1}-\left(  2k+l+1-j\right)  C_{2k+l+1-j}\right]  ,$ $j=0,1,\ldots,2k+l,$
$P_{j}=\left(  -1\right)  ^{\left[  \frac{j}{2}\right]  }\frac{\left(
l+j\right)  !}{j!\left(  l-j\right)  !2^{j}}$ for $j=0,1,\ldots,l$ and
$P_{j}=0$ for $j>l$ .

In general one can use the formula \cite[(1.8.1.1)]{PrudBriMar} according to
which
\begin{align*}
\int_{0}^{x}{t^{2k+l+\frac{3}{2}}\mathbf{J}_{l+\frac{1}{2}}\left(  \omega
t\right)  }dt=\frac{\omega^{l+1/2} x^{2\left(  k+l\right)  +3}%
}{2^{l+\frac{1}{2}}\left[  2\left(  k+l\right)  +3\right]  \Gamma{\left(
l+\frac{3}{2}\right)  }}\hspace{0.1mm}_{1}\mathbf{F}_{2}\left(  \frac{2\left(
k+l\right)  +3}{2};\frac{2\left(  k+l\right)  +5}{2},l+\frac{3}{2}%
;-\frac{\left(  \omega x\right)  ^{2}}{4}\right)  .
\end{align*}

The derivative of (\ref{uN}) can also be written in a closed form
\begin{equation}\label{duN}
\begin{split}
\frac{d\mathbf{u}_{N}\left(  x\right)  }{dx}  &  =\frac{1}{2\sqrt{x}}\mathbf{J}%
_{l+\frac{1}{2}}\left(  \omega x\right)  +\frac{\omega\sqrt{x}}{2}\left\{
\mathbf{J}_{l-\frac{1}{2}}\left(  \omega x\right)  -\mathbf{J}_{l+\frac{3}{2}%
}\left(  \omega x\right)  \right\} \\
&  \quad+ \sum_{n=0}^{N}a_{n}\sum_{k=0}^{n} \Xi_{n,k}\left( u_{0}^{\prime
}\left(  x\right) \widetilde{X}^{\left(  2\left(  n-k\right)  \right)
}\left(  x\right)  - \frac{\widetilde{X} ^{\left(  2\left(  n-k\right)
-1\right)  }\left(  x\right) }{u_{0}(x)}\right)  \int_{0}^{x}t^{2k+l+1}%
\mathbf{d}_{l}\left(  t,\omega\right) \, dt\\
&  \quad+\mathbf{d}_{l}\left(  x,\omega\right)  u_{0}\sum_{n=0}%
^{N}{a_{n}\sum_{k=0}^{n}{\Xi_{n,k}{x^{2k+l+1}\widetilde{X}}^{\left(  2\left(
n-k\right)  \right)  }\left(  x\right)  }}.
\end{split}
\end{equation}

\section{Numerical solution of spectral problems}
One of the possible applications of the proposed approximation is the numerical solution of spectral problems. Recall (see, e.g., \cite{CasKravTor}, \cite{KosTesh2011}, \cite{Zettl}) that the classical formulation of spectral problems for equation \eqref{Intro Bessel perturbed} consists in finding the values of the spectral parameter for which the regular solution $u$ of \eqref{Intro Bessel perturbed} satisfies
\begin{equation}
\beta u(b,\lambda)+\gamma u^{\prime}(b,\lambda)=0 \label{EqBC2}%
\end{equation}
for some $\beta,\gamma\in\mathbb{C}$ such that $|\beta|+|\gamma|\neq0$. The regular solution (which is bounded around $x=0$) is unique up to a multiplicative constant for $l>0$, so no additional boundary condition is required, while for $-1/2\le l<0$ one of the equivalent ways to define the regular solution is by its asymptotics $u(x,\lambda)\sim x^{l+1}$, $x\to 0$. In both cases it is the regular solution considered in previous sections.

The following algorithm can be proposed for approximate solution of spectral problems for equation \eqref{Intro Bessel perturbed}.

\begin{enumerate}
\item Transform equation \eqref{Intro Bessel perturbed} into the form \eqref{Bessel perturbed transformed} by taking $\Lambda:=\lambda -q(0)$ and $q_0:=q-q(0)$.
\item Find a particular solution $u_0$ of \eqref{Bessel perturbed transformed} for $\Lambda=0$ satisfying the asymptotic conditions
    \[
    u_0(x)\sim x^{l+1}\qquad \text{and}\qquad u'_0(x)\sim (l+1)x^l,\quad x\to 0.
    \]
    The method described in \cite[Section 3]{CasKravTor} may be used.
\item Construct the functions $\{\mathbf{c}_n\}_{n\ge 0}$ using \eqref{cn def}.
\item Find $N$ and the coefficients $a_0,\ldots,a_N$ by minimizing the left hand side expression in \eqref{Q approx}, i.e., by solving one-dimensional uniform approximation problem.
\item Find approximate eigenvalues as zeros of \eqref{EqBC2}, the approximate solution $\mathbf{u}_N$ and the approximate derivative $\mathbf{u}'_N$ are calculated for all required values of $\lambda$ by \eqref{uN expanded} and \eqref{duN}.
\end{enumerate}
We refer the reader to \cite{CasKravTor}, \cite{KhmeKravTorTrem} and \cite{KT 2015 JCAM} for useful hints for realization of the proposed algorithm, in particular, for methods of calculation of the recursive integrals in \eqref{Xtilde} and of solution of the approximation problem \eqref{Q approx}.

We illustrate the proposed algorithm by the following numerical example.
\begin{Example}\label{Ex1}
Consider the following equation
\begin{equation}\label{Eq x2}
    -u''(x)+\frac{l(l+1)}{x^2}u(x) + x^2 u(x) = \lambda u(x),\qquad x\in (0,\pi],
\end{equation}
and consider two different spectral problems for equation \eqref{Eq x2}, one given by the Dirichlet boundary condition
\begin{equation}\label{BC Dirichlet}
u(\pi,\lambda)=0,
\end{equation}
and another one given by the Neumann boundary condition
\begin{equation}\label{BC Neumann}
u'(\pi,\lambda)=0.
\end{equation}

We implemented the proposed algorithm in Matlab 2012. All computations were performed in the double machine precision. All the integrals involved were calculated using the Newton-Cottes 6 point modified integration formula and all the functions involved were represented by their values on 20001 uniformly distributed points. The approximation problem \eqref{Q approx} was solved by the Remez algorithm, see \cite{KhmeKravTorTrem}. Exact characteristic functions for both spectral problems can be written in terms of the Kummer confluent hypergeometric functions, we used Wolfram Mathematica to obtain the ``exact'' eigenvalues from these characteristic functions.

\begin{table}[h]
\centering
\begin{tabular}{ccc}
\begin{tabular}{ccc}
$n$ & $\lambda_n$ & $\Delta \lambda_n$\\
\hline
1 &    2.001805251890 & $3.0\cdot 10^{-11}$\\
2 &    6.103446111613 & $4.1\cdot 10^{-11}$\\
3 &   10.956972252829 & $7.6\cdot 10^{-11}$\\
5 &   25.937702288545& $1.1\cdot 10^{-10}$\\
10 &  98.393217235689 & $5.5\cdot 10^{-10}$\\
20 & 393.38152313544 & $5.5\cdot 10^{-11}$\\
50 & 2478.3783027695 & $2.4\cdot 10^{-10}$\\
100 & 9953.3778488114 & $8.4\cdot 10^{-11}$ \\
\hline
 & $\max_{n\le 100} \Delta\lambda_n$ & $8.7\cdot 10^{-10}$\\
\hline
& $N$ in \eqref{Q approx} & 15\\
& $\varepsilon$ in \eqref{Q approx} & $1.4\cdot 10^{-10}$\\
\hline
\end{tabular}
&
\begin{tabular}{ccc}
$n$ & $\lambda_n$ & $\Delta \lambda_n$\\
\hline
1 &    4.015495482801 & $7.2\cdot 10^{-12}$\\
2 &    8.347478350031 & $2.5\cdot 10^{-11}$\\
3 &   13.920779935452 & $9.5\cdot 10^{-11}$\\
5 &   30.849480217032& $6.2\cdot 10^{-10}$\\
10 & 108.29669952501 & $3.2\cdot 10^{-10}$\\
20 & 413.28164218574 & $2.0\cdot 10^{-10}$\\
50 & 2528.2772347906 & $5.6\cdot 10^{-11}$\\
100 &10053.276592794 & $1.0\cdot 10^{-10}$ \\
\hline
 & $\max_{n\le 100} \Delta\lambda_n$ & $6.9\cdot 10^{-10}$\\
\hline
& $N$ in \eqref{Q approx} & 14\\
& $\varepsilon$ in \eqref{Q approx} & $1.4\cdot 10^{-10}$\\
\hline
\end{tabular}
&
\begin{tabular}{cc}
$n$ & $\Delta \lambda_n$\\
\hline
1 &    $7.9\cdot 10^{-6}$\\
2 &   $3.7\cdot 10^{-5}$\\
3 &    $2.2\cdot 10^{-4}$\\
5 &    $4.9\cdot 10^{-3}$\\
10 &  $2.6\cdot 10^{-3}$\\
20 &  $8.5\cdot 10^{-4}$\\
50 &  $9.0\cdot 10^{-4}$\\
100 & $8.3\cdot 10^{-6}$ \\
\hline
$\max\Delta\lambda_n$ & $4.9\cdot 10^{-3}$\\
\hline
 $N$ & 14\\
 $\varepsilon$ & $7.5\cdot 10^{-4}$\\
\hline
\end{tabular}\\
$l=-1/2$ & $l=1/2$ & $l=1$
\end{tabular}
\caption{The exact eigenvalues $\lambda_{n}$ for the spectral problem \eqref{Eq x2}, \eqref{BC Dirichlet} and the absolute errors $\Delta \lambda_n$ of the approximate eigenvalues calculated using the proposed algorithm for different values of $l$. The last two rows show the error achieved when solving the approximation problem \eqref{Q approx} and the parameter $N$ used for the construction of the approximate solutions.}
\label{Ex1Table1}
\end{table}

\begin{table}[h]
\centering
\begin{tabular}{ccc}
\begin{tabular}{ccc}
$n$ & $\lambda_n$ & $\Delta \lambda_n$\\
\hline
1 &    1.997907144139 & $2.1\cdot 10^{-11}$\\
2 &    5.868682400048 & $2.5\cdot 10^{-11}$\\
3 &    9.324466618141 & $7.9\cdot 10^{-11}$\\
5 &   21.551367196747& $3.8\cdot 10^{-10}$\\
10 &  88.908090757514 & $3.5\cdot 10^{-10}$\\
20 & 373.884502196653 & $1.1\cdot 10^{-10}$\\
50 & 2428.87873010418 & $2.4\cdot 10^{-10}$\\
100 & 9853.87795220908 & $8.0\cdot 10^{-11}$ \\
\hline
 & $\max_{n\le 100} \Delta\lambda_n$ & $5.7\cdot 10^{-10}$\\
\hline
\end{tabular}
&
\begin{tabular}{ccc}
$n$ & $\lambda_n$ & $\Delta \lambda_n$\\
\hline
1 &   3.980969145691 & $2.6\cdot 10^{-11}$\\
2 &   7.612337613278 & $1.3\cdot 10^{-10}$\\
3 &  11.447054658026 & $1.3\cdot 10^{-10}$\\
5 &  25.935713162142& $5.1\cdot 10^{-11}$\\
10 & 98.310440283471 & $7.1\cdot 10^{-10}$\\
20 &393.284522228857 & $1.3\cdot 10^{-10}$\\
50 &2478.27765466469 & $4.1\cdot 10^{-10}$\\
100 &  9754.77670125413 & $2.5\cdot 10^{-10}$ \\
\hline
 & $\max_{n\le 100} \Delta\lambda_n$ & $9.6\cdot 10^{-10}$\\
\hline
\end{tabular}
\\
$l=-1/2$ & $l=1/2$
\end{tabular}
\caption{The exact eigenvalues $\lambda_{n}$ for the spectral problem \eqref{Eq x2}, \eqref{BC Neumann} and the absolute errors $\Delta \lambda_n$ of the approximate eigenvalues calculated using the proposed algorithm for different values of $l$.}
\label{Ex1Table2}
\end{table}

We solved the spectral problem \eqref{Eq x2}, \eqref{BC Dirichlet} for three values of $l$, namely, for $l=-1/2$, $l=1/2$ and $l=1$, and the spectral problem \eqref{Eq x2}, \eqref{BC Neumann} for $l=-1/2$ and $l=1/2$ (for $l=1$ Wolfram Mathematica failed to provide us with a usable analytic expression for the characteristic function of the problem). The obtained eigenvalues and their absolute errors are reported in Tables \ref{Ex1Table1} and \ref{Ex1Table2}. We emphasize the remarkable accuracy of the eigenvalues for the first two values of $l$. Equations with these values of the parameter $l$ appear when one implements separation of variables in domains with cylindrical symmetry, e.g., in fiber optics \cite{RCasVKravSTor}, and present difficulties (especially the limit value $l=-1/2$) for various numerical packages. For example, the recent package \textsc{Matslice} 2.0 \cite{LedouxVDaele} requires to use $l=-0.499999988$ as a workaround in order to obtain correct approximate eigenvalues, while fails for $l=-1/2$. See also \cite[Example 7.5]{CasKravTor}. For larger values of $l$ the precision deteriorates as we illustrate for $l=1$, this is mainly due to limitation of machine precision arithmetics and large coefficients appearing during the solution of approximation problem \eqref{Q approx}, similar phenomena was observed in \cite{KT 2015 JCAM}. We leave detailed analysis of possible techniques to overcome this restriction for a separate study.
\end{Example}

\begin{Remark}
Similarly to the proof in \cite{Volk} one can see that the Goursat problem
\begin{equation*}
    \frac{dK(x,x)}{dx}=h(x)
\end{equation*}
for equation \eqref{waveq} is well-posed meaning that if one approximate $q/2$ by the derivatives $\mathbf{c}_n'$, i.e., if one finds $N$ and coefficients $a_0,\ldots,a_N$ such that the inequality
\begin{equation}\label{q approx}
    \left|\frac{q(x)}{2}-\sum_{n=0}^N a_n\mathbf{c}_n'(x)\right|<\varepsilon
\end{equation}
holds for all  $x\in[0,b]$, then there exists such constant $C$, dependent on $q$, $l$ and $b$ only, that
\begin{equation}\label{K uniform}
    \bigl| K(x,t) - K_N(x,t)\bigr|<C\varepsilon,\qquad 0<t\le x\le b,
\end{equation}
where $K_N$ is given by \eqref{3.27}. The uniform approximation \eqref{K uniform} of the integral kernel $K$ provides uniform error bound for the approximate solution $\mathbf{u}_N$, see \cite{KT 2015 JCAM}, a stronger result compared to Theorem \ref{Thm Approx Sol}. The necessary condition for the approximation problem \eqref{q approx} to have a solution is that $q(0)=0$, which is not a problem, c.f., Section \ref{Sect 5}. Completeness of the functions $\{\mathbf{c}_n'\}_{n\ge 0}$ in the space $C_0([0,b])$ is a sufficient condition. Unfortunately we do not have a proof of this result. Nevertheless, we can propose a modification of the algorithm. Instead of minimizing \eqref{Q approx} in Step 4, one minimizes \eqref{q approx} in order to find $N$ and the coefficients $a_0,\ldots,a_N$. We checked the performance of this modified algorithm on the spectral problems from Example \ref{Ex1}. For $l=-1/2$ the error achieved in \eqref{q approx} was $7.3\cdot 10^{-13}$ for $N=19$ resulting in $\max_{n\le 100}\Delta\lambda_n\approx 3.6\cdot 10^{-12}$. For $l=1/2$ the error achieved in \eqref{q approx} was $7.6\cdot 10^{-13}$ for $N=22$ resulting in $\max_{n\le 100}\Delta\lambda_n\approx 3.6\cdot 10^{-12}$. However, for $l=1$ the approximation was worse, the error achieved in \eqref{q approx} was $6.3\cdot 10^{-3}$ for $N=13$ resulting in $\max_{n\le 100}\Delta\lambda_n\approx 2.2\cdot 10^{-3}$. We leave a detailed study of this modification for the future work.
\end{Remark}
\appendix

\section{Appendix}

\label{Appendix} The Riemann function for the boundary value problem
(\ref{3.22}), (\ref{3.23}), (\ref{3.24}) has the form \cite{Volk}
\begin{equation*}
v\left(  z,s;\xi,\eta\right)  =%
\begin{cases}
v_{1}\left(  z,s;\xi,\eta\right)  & \text{if }0\le s<\eta<z<\xi,\\
v_{2}\left(  z,s;\xi,\eta\right)  & \text{if }0\le s<z<\eta<\xi,
\end{cases}
\end{equation*}
with
\[
v_{1}\left(  z,s;\xi,\eta\right)  =\left(  \eta-z\right)  ^{l}\left(
s-\xi\right)  ^{l}\left(  s-z\right)  ^{-2l}\hspace{0.1mm}_{2}\mathbf{F}%
_{1}\left(  -l,-l;1;\frac{\left(  z-\xi\right)  \left(  s-\eta\right)
}{\left(  z-\eta\right)  \left(  s-\xi\right)  }\right)
\]
and%
\[
{v_{2}\left(  z,s;\xi,\eta\right)  =\frac{-\sin{\left(  \pi l\right)  }%
\Gamma^{2}{\left(  1+l\right)  }}{\pi\Gamma{\left(  2+2l\right)  }}%
\frac{\left(  \eta-\xi\right)  ^{1+2l}\left(  z-s\right)  }{\left(
s-\xi\right) ^{l+1}\left(  \eta-z\right)  ^{ l+1}}\hspace{0.1mm}_{2}%
\mathbf{F}_{1}\left(  1+l,1+l;2+2l;\frac{\left(  z-s\right)  \left(  \eta
-\xi\right)  }{\left(  z-\eta\right)  \left(  s-\xi\right)  }\right)  .}%
\]
It is continuous with the exception of the line $z=\eta$.

The aim of this appendix is to provide the uniform with respect to $\xi$,
$\eta$ estimates for the functions $v_{1}$, $v_{2}$ and for the integral
\eqref{3.21}. The main reason for the necessity in such new estimates is that
those presented in \cite{Volk} contain an error. For example, in \cite{Volk}
it is stated that $u(\xi,\eta)=O[(\xi-\eta)^{1+l-\rho}]$, $0\le\rho<1$. One
can easily obtain from this estimate that the integral kernel $K$ of the
transmutation operator should satisfy the inequality $|K(x,t)|\le
C(xt)^{1-\rho}$ and in particular, for $x=t$, the inequality $|K(x,x)|\le
Cx^{2-2\rho}$ which is impossible for any $\rho<1/2$ due to the Goursat
condition $\frac{dK(x,x)}{dx}=\frac12 q(x)$.

\begin{Lemma}
The functions $v_{1}$ and $v_{2}$ satisfy the following inequalities
\begin{align}
|v_{1}(z,0;\xi,\eta)| & \le C_{1}\frac{|\eta- z|^{l} \xi^{l}}{z^{2l}}, & z &
\in\left( \frac{2\xi\eta}{\xi+\eta},\xi\right) ,\label{v_1_est1}\\
|v_{1}(z,0;\xi,\eta)| & \le C_{2}\frac{|\xi- z|^{l} \eta^{l}}{z^{2l}}\left(
\log\left( \frac{(\xi-z)\eta}{(z-\eta)\xi}\right) +C_{3}\right) , & z &
\in\left( \eta,\frac{2\xi\eta}{\xi+\eta}\right) ,\label{v_1_est2}\\
|v_{2}(z,0;\xi,\eta)| & \le C_{4}\frac{(\xi-\eta)^{1+2l}z}{\xi^{1+l}%
(\eta-z)^{1+l}}, & z & \in\left( 0,\frac{\xi\eta}{2\xi-\eta}\right)
,\label{v_2_est1}\\
|v_{2}(z,0;\xi,\eta)| & \le C_{5}\frac{(\xi-\eta)^{l}}{z^{l}}\left( \log\left(
\frac{z(\xi-\eta)}{\xi(\eta-z)}\right) +C_{6}\right) , & z & \in\left(
\frac{\xi\eta}{2\xi-\eta},\eta\right) ,\label{v_2_est2}%
\end{align}
where the constants $C_{i}$ do not depend on $z$, $\xi$ and $\eta$.
\end{Lemma}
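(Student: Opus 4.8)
The plan is to substitute $s=0$ into the closed forms of $v_{1}$ and $v_{2}$ recorded above and then to control, separately on two sub-intervals of $z$, the two Gauss hypergeometric functions that remain. With $s=0$, and using the sign relations of the two cases ($0<\eta<z<\xi$ for $v_{1}$, $0<z<\eta<\xi$ for $v_{2}$), the moduli of the algebraic prefactors are
\[
\bigl|(\eta-z)^{l}(-\xi)^{l}(-z)^{-2l}\bigr|=\frac{|\eta-z|^{l}\xi^{l}}{z^{2l}},
\qquad
\Bigl|\frac{(\eta-\xi)^{1+2l}z}{(-\xi)^{l+1}(\eta-z)^{l+1}}\Bigr|=\frac{(\xi-\eta)^{1+2l}z}{\xi^{1+l}(\eta-z)^{1+l}},
\]
(the modulus of a power with real exponent being the corresponding power of the modulus), i.e.\ exactly the prefactors in \eqref{v_1_est1} and \eqref{v_2_est1}; and the arguments of the hypergeometric functions reduce to $w_{1}:=-(\xi-z)\eta/((z-\eta)\xi)<0$ and $w_{2}:=-z(\xi-\eta)/((\eta-z)\xi)<0$. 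An elementary computation shows $|w_{1}|=1$ exactly when $z=\tfrac{2\xi\eta}{\xi+\eta}$ and $|w_{2}|=1$ exactly when $z=\tfrac{\xi\eta}{2\xi-\eta}$, with $|w_{i}|\le1$ on the first and $|w_{i}|\ge1$ on the second of the two $z$-intervals in the statement — these being precisely the break points separating \eqref{v_1_est1} from \eqref{v_1_est2} and \eqref{v_2_est1} from \eqref{v_2_est2}.

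On the sub-intervals where $|w_{i}|\le1$ the parameters satisfy $c-a-b=1+2l\ge0$ for the series in $v_{1}$ and $c-a-b=0$ for the series in $v_{2}$, so in every case the Gauss series converges on $\{|w|\le1\}$ away from $w=1$ and ${}_{2}F_{1}(a,b;c;\cdot)$ extends continuously to $[-1,0]$ — by absolute convergence when $1+2l>0$, and, in the logarithmic case ($v_{2}$ for all $l$, and the $v_{1}$-series at the borderline $l=-\tfrac12$), by Abel's theorem, the series being conditionally convergent at $w=-1$. Hence $|{}_{2}F_{1}|$ is bounded on $[-1,0]$ by a constant depending only on $l$, and multiplication by the algebraic prefactor yields \eqref{v_1_est1} and \eqref{v_2_est1}, the numerical factor $-\sin(\pi l)\Gamma^{2}(1+l)/(\pi\Gamma(2+2l))$ of $v_{2}$ being absorbed into $C_{4}$.

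On the sub-intervals where $|w_{i}|\ge1$ the decisive feature is that in both $v_{1}$ and $v_{2}$ the two numerator parameters coincide ($-l,-l$ for $v_{1}$; $1+l,1+l$ for $v_{2}$), so the connection formula for continuation to $|z|>1$ (see, e.g., \cite{OlLoBoiCla}) degenerates into its logarithmic form: for $a=b$ and $w\to-\infty$,
\[
{}_{2}F_{1}(a,a;c;w)=\frac{\Gamma(c)}{\Gamma(a)\Gamma(c-a)}\,(-w)^{-a}\bigl(\ln(-w)+O(1)\bigr)+O\!\bigl((-w)^{-a-1}\ln(-w)\bigr).
\]
Since for $a\in\{-l,1+l\}$, $l\ge-\tfrac12$, the leading coefficient is finite and ${}_{2}F_{1}(a,a;c;\cdot)$ is continuous on $(-\infty,1)$, this gives $|{}_{2}F_{1}(a,a;c;w)|\le C\,(-w)^{-a}\bigl(\log(-w)+1\bigr)$ for all $w\le-1$ with $C$ depending only on $l$; for $l\in\mathbb{N}$ it holds trivially, ${}_{2}F_{1}(-l,-l;1;\cdot)$ then being a polynomial of degree $l$ and $v_{2}\equiv0$. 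Applying this with $(a,w)=(-l,w_{1})$ and with $(a,w)=(1+l,w_{2})$ and multiplying by the algebraic prefactors, the factor $(-w_{i})^{-a}$ recombines with the prefactor to give exactly $\tfrac{|\xi-z|^{l}\eta^{l}}{z^{2l}}$ in the first case and $\tfrac{(\xi-\eta)^{l}}{z^{l}}$ in the second, while $\log(-w_{i})=\log|w_{i}|$ produces the logarithms in \eqref{v_1_est2} and \eqref{v_2_est2}, the additive $O(1)$ going into $C_{3}$ and $C_{6}$.

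The hard part is this last step: extracting from the degenerate (logarithmic) connection formula a bound uniform in $\xi,\eta$ simultaneously near $|w_{i}|=1$ and as $|w_{i}|\to\infty$, while covering the special values $l=-\tfrac12$ (where $c-a-b$ for the $v_{1}$-series is only $\ge0$) and $l\in\mathbb{N}$ (where $v_{2}$ vanishes and the $v_{1}$-hypergeometric collapses to a polynomial). Everything else — the substitution $s=0$, the bookkeeping of moduli and of the sign constraints, and the cancellation of the prefactors against the powers $(-w_{i})^{-a}$ — is routine once the break points $\tfrac{2\xi\eta}{\xi+\eta}$ and $\tfrac{\xi\eta}{2\xi-\eta}$ have been located.
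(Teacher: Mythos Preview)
Your approach is essentially the same as the paper's: split each $z$-interval at the point where the hypergeometric argument has modulus~$1$, bound ${}_2F_1$ directly on $[-1,0]$, and use the logarithmic connection formula on $(-\infty,-1]$; then recombine with the prefactor. The differences are in execution rather than strategy. On $[-1,0]$ the paper bounds the series by an alternating-series truncation (the terms $\bigl((-l)_n/n!\bigr)^2\sigma_1^n$ alternate and eventually decrease), while you invoke continuity/Abel's theorem; on $(-\infty,-1]$ the paper quotes the exact series representation \cite[(2.1.4.18)]{Erdelyi} for ${}_2F_1(-l,-l;1;\sigma_1)$, which gives the form $|\sigma_1|^l\bigl(c_1|\log(-\sigma_1)+\pi\cot\pi l|+c_2\bigr)$ at once, whereas you use only the leading asymptotic and then patch in uniformity near $w=-1$ via continuity. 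Both routes are valid; the paper's is a little more explicit (it writes down the constants) and avoids the two-step ``asymptotic $+$ compactness'' argument, while yours is slightly cleaner conceptually and makes the role of the degeneracy $a=b$ transparent. Your handling of the special cases $l\in\mathbb{N}$ (polynomial, $v_2\equiv 0$) and $l=-\tfrac12$ (Abel at the borderline $c-a-b=0$) matches the paper's.
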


\begin{proof}
Consider
\begin{equation}\label{v1 def F}
v_{1}\left(  z,0;\xi,\eta\right)  =\frac{\left(  -1\right)  ^{-l}\left(
\eta-z\right)  ^{l}\xi^{l}}{z^{2l}}\hspace{0.01cm}_{2}\mathbf{F}_{1}\left(
-l,-l;1;\sigma_1\right)
,\qquad 0<\eta<z<\xi,
\end{equation}
where  $\sigma_1:=\frac{(z-\xi)\eta}{(z-\eta)\xi}$. When the variable $z$ changes from $\eta$ to $\xi$, the variable $\sigma_1$ changes from $-\infty$ to $0$. In order to estimate the hypergeometric function in \eqref{v1 def F} we consider two cases, $\sigma_1\in (-1,0)$ and $\sigma_1\in(-\infty,-1)$, corresponding to $z\in \left(\frac{2\xi \eta}{\xi+\eta},\xi\right)$ and $z\in\left(\eta,\frac{2\xi \eta}{\xi+\eta},\right)$, respectively.
For the first case we have that $\hspace{0.01cm}_{2}\mathbf{F}_{1}\left(
-l,-l;1;\sigma_1\right)=\sum_{n=0}^\infty \left(\frac{(-l)_n}{n!}\right)^2 \sigma_1^n$, where $(\alpha)_n$ denotes the Pochhammer symbol. The terms of this series alternate in sign and their absolute values are monotone decreasing for $n>l$ and go to zero as $n\to\infty$. Hence,
\begin{equation}\label{1F2 first bound}
\left|\hspace{0.01cm}_{2}\mathbf{F}_{1}\left(
-l,-l;1;\sigma_1\right)\right| \le \sum_{n=0}^{[l+1]} \left(\frac{(-l)_n}{n!}\right)^2 |\sigma_1|^n\le \sum_{n=0}^{[l+1]} \left(\frac{(-l)_n}{n!}\right)^2 =: C_1,\qquad \sigma_1\in(-1,0).
\end{equation}
For the second case, assume first that $l\not\in\mathbb{N}$. We obtain using \cite[(2.1.4.18)]{Erdelyi} that
\begin{equation*}
\hspace{0.01cm}_{2}\mathbf{F}_{1}\left(  -l,-l;1;\sigma_{1}\right)
=\frac{\left(  -\sigma_{1}\right)  ^{l}}{\Gamma{\left(  -l\right)  }%
\Gamma{\left(  1+l\right)  }}\sum_{n=0}^{\infty}\left[  \frac{\left(
-l\right)  _{n}}{n!}\right]  ^{2}\sigma_{1}^{-n}\left[  \log\left(  -\sigma
_{1}\right)  +h_{n}\right],
\end{equation*}
where $h_n = 2\psi(1+n) - 2\psi(n-l)+\pi \cot(\pi l)$. The series $\sum_{n=0}^\infty \left(\frac{(-l)_n}{n!}\right)^2 \sigma_1^{-n}$ is bounded by the same estimate \eqref{1F2 first bound} and, since $\psi(1+n) - \psi(n-l) = \frac{l+1}{n-l} +O(n^{-2})$, see \cite[(1.18.7)]{Erdelyi}, the series $\sum_{n=0}^\infty \left(\frac{(-l)_n}{n!}\right)^2 \sigma_1^{-n} \bigl(\psi(1+n) - \psi(n-l)\bigr)$ can be uniformly bounded by the absolutely convergent series $\sum_{n=0}^\infty \frac{l+1}{n-l}\left(\frac{(-l)_n}{n!}\right)^2$. Hence
\begin{equation}\label{1F2 second bound}
\begin{split}
\left|\hspace{0.01cm}_{2}\mathbf{F}_{1}\left(
-l,-l;1;\sigma_1\right)\right| & = \frac{|\sigma_1|^l}{|\Gamma(-l)|\Gamma(1+l)}\left|\bigl(\log(-\sigma_1) + \cot \pi l\bigr)\sum_{n=0}^\infty \left(\frac{(-l)_n}{n!}\right)^2 \sigma_1^{-n}\right.\\
&\quad \left. + \sum_{n=0}^\infty \left(\frac{(-l)_n}{n!}\right)^2\sigma_1^{-n}\bigl(2\psi(1+n) - 2\psi(n-l)\bigr)\right|\\
&\le \frac{|\sigma_1|^l}{|\Gamma(-l)|\Gamma(1+l)} \bigl( c_1 |\log(-\sigma_1) + \pi \cot\pi l| + c_2\bigr) \le C_2|\sigma_1|^l(\log(-\sigma_1) + C_3).
\end{split}
\end{equation}
In the case $l\in\mathbb{N}$ the hypergeometric function reduces to a polynomial, see \cite[(2.1.4.20)]{Erdelyi}, and the proof is straightforward.
Combining estimates \eqref{1F2 first bound} and \eqref{1F2 second bound} with \eqref{v1 def F} one easily obtains estimates \eqref{v_1_est1} and \eqref{v_1_est2}.
The proof of the estimates \eqref{v_2_est1} and \eqref{v_2_est2} is completely similar with the only difference that for the case $l\in \mathbb{N}$ one has to use the formula \cite[(2.1.4.19)]{Erdelyi}.
\end{proof}

In the next lemma we estimate the integrals involving the Riemann function.

\begin{Lemma}
\label{Estimates Riemann} The following inequalities hold
\begin{equation}
\int_{\eta}^{\xi}{\left\vert v_{1}\left(  z,0;\xi,\eta\right)  \right\vert
z^{l-\frac{1}{2}}}dz\leq A_{1}\left(  \xi-\eta\right)  ^{l} \left( \sqrt\xi-
\sqrt\eta\right)  \label{V1}%
\end{equation}
and
\begin{equation}
\int_{0}^{\eta}{\left\vert v_{2}\left(  z,0;\xi,\eta\right)  \right\vert
z^{l-\frac{1}{2}}}dz\leq 
\begin{cases}
    A_{2}\left(  \xi-\eta\right) ^{l} \left( \sqrt\xi-
    \sqrt\eta\right) , & \text {if } l> 0,\\
    A_{2}\left(  \xi-\eta\right) ^{l+1/2}\bigl(1-\frac \eta\xi\bigr)^{l+1/2}, & \text{if }-1/2\le l<0,
\end{cases}
    \label{V2}%
\end{equation}
where the constants $A_{1}$, $A_{2}$ do not depend on $\xi$ and $\eta$.
\end{Lemma}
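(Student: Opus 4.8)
The plan is to bound each of the two integrals by splitting the range of integration according to the two regions in which the previous Lemma provides estimates for $v_1$ and $v_2$, and then to integrate each of the four elementary (but singular or logarithmic) bounds explicitly. For \eqref{V1} I would write $\int_\eta^\xi = \int_\eta^{2\xi\eta/(\xi+\eta)} + \int_{2\xi\eta/(\xi+\eta)}^\xi$ and substitute \eqref{v_1_est2}, \eqref{v_1_est1} respectively. On the outer subinterval the integrand is, up to the constant $C_1$, equal to $|\eta-z|^l\xi^l z^{-l-1/2}$; since $z\ge\eta$ one has $z^{-l-1/2}\le \eta^{-l-1/2}$ (for $l\ge -1/2$ one must be a bit careful about the sign of the exponent, but in all cases $z^{l-1/2}/z^{2l}=z^{-l-1/2}$ is decreasing for $l>-1/2$ and constant for $l=-1/2$), so the integral is controlled by $\xi^l\eta^{-l-1/2}\int(\eta-z)^l\,dz\le C(\xi-\eta)^l\,\xi^l\eta^{-l-1/2}\cdot(\xi-\eta)$; one then checks $(\xi-\eta)^{l+1}\xi^l\eta^{-l-1/2}\lesssim(\xi-\eta)^l(\sqrt\xi-\sqrt\eta)$ using $\sqrt\xi-\sqrt\eta=(\xi-\eta)/(\sqrt\xi+\sqrt\eta)$ and the elementary inequality $\xi-\eta\le \xi$. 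The inner subinterval, where the logarithmic factor appears, is handled the same way after noting that $\log\frac{(\xi-z)\eta}{(z-\eta)\xi}$ is integrable against $(\xi-z)^l$ near $z=\xi$ and its singularity at $z=\eta$ is only logarithmic, so $\int$ of the bound is again $\le C(\xi-\eta)^l(\sqrt\xi-\sqrt\eta)$ after collecting powers.

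For \eqref{V2} the strategy is identical: split $\int_0^\eta=\int_0^{\xi\eta/(2\xi-\eta)}+\int_{\xi\eta/(2\xi-\eta)}^\eta$ and insert \eqref{v_2_est1}, \eqref{v_2_est2}. The genuinely new feature here is the case distinction in the conclusion. For $l>0$ the factor $(\eta-z)^{-l-1}$ in \eqref{v_2_est1} is integrable at $z=\eta$ only because on that first subinterval $z$ stays away from $\eta$ (indeed $\eta-z\ge \eta(\xi-\eta)/(2\xi-\eta)\gtrsim (\xi-\eta)$), so $\int_0^{\xi\eta/(2\xi-\eta)}(\xi-\eta)^{1+2l}\xi^{-1-l}(\eta-z)^{-1-l}z\,z^{l-1/2}\,dz$ is bounded by integrating $z^{l+1/2}$ over an interval of length $\lesssim \eta$, giving $(\xi-\eta)^{1+2l}\xi^{-1-l}(\eta-z)^{-1-l}\big|\cdot\eta^{l+3/2}$, and one verifies the exponents collapse to $(\xi-\eta)^l(\sqrt\xi-\sqrt\eta)$; the second subinterval with the logarithm is treated as in the $v_1$ case. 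For $-1/2\le l<0$ one cannot keep $z$ away from $\eta$ uniformly, and the endpoint behaviour is different: here I would estimate more crudely, keeping track that the net power of $(\xi-\eta)$ that survives is $l+1/2$ and that the remaining factor is exactly $(1-\eta/\xi)^{l+1/2}$; this comes out by bounding $z^{l-1/2}\le$ const on the relevant range (since $l-1/2<0$, $z^{l-1/2}$ blows up at $0$, so one must instead combine it with the $z$ from \eqref{v_2_est1}, i.e. use $z\cdot z^{l-1/2}=z^{l+1/2}$ which is now bounded) and integrating the singular $(\eta-z)^{-l-1}$ — which, since $-l-1\in(-1,-1/2]$, is integrable on the whole interval up to $z=\eta$ — producing a factor $(\eta - \xi\eta/(2\xi-\eta))^{-l}\sim \bigl(\eta(\xi-\eta)/\xi\bigr)^{-l}$, and then collecting powers of $\xi,\eta,(\xi-\eta)$ into the stated form.

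The main obstacle will be the bookkeeping of exponents in the $-1/2\le l<0$ branch of \eqref{V2}: there the three competing small quantities $\xi-\eta$, $\eta/\xi$ and $\eta$ itself have to be balanced so that the answer is precisely $(\xi-\eta)^{l+1/2}(1-\eta/\xi)^{l+1/2}$ and not merely $\lesssim(\xi-\eta)^{l+1/2}$ (which would be too weak for the later application, since $\xi-\eta$ can be comparable to $\xi$). Concretely one must show the logarithmic contribution from the second subinterval does not spoil this power — this uses that $\int_0^1 u^{l+1/2}|\log u|\,du<\infty$ for $l>-1/2$, and at the endpoint $l=-1/2$ that the logarithm is integrated against $u^{0}$ on a bounded interval, so no divergence occurs. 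Everything else is routine: splitting, substituting the Lemma's bounds, evaluating $\int (\eta-z)^a z^b\,dz$ in closed form (Beta-function or direct), and simplifying via $\sqrt\xi-\sqrt\eta=(\xi-\eta)/(\sqrt\xi+\sqrt\eta)$ together with $0<\eta<\xi\le b^2$ to absorb positive powers of $\xi$ into constants.
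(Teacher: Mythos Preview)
Your splitting is exactly right, and the overall architecture matches the paper's: break each integral at the point where the two estimates for $v_1$ (resp.\ $v_2$) switch, and treat the constant and logarithmic parts separately. The gap is in how you bound the resulting integrands.

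For the outer subinterval of \eqref{V1} you bound $z^{-l-1/2}\le \eta^{-l-1/2}$ and then integrate $(z-\eta)^l$, obtaining $\xi^l\eta^{-l-1/2}(\xi-\eta)^{l+1}$. The claimed reduction of this to $(\xi-\eta)^l(\sqrt\xi-\sqrt\eta)$ is false: it is equivalent to $(\xi/\eta)^{l+1/2}\lesssim 1$, which blows up as $\eta/\xi\to 0$ whenever $l>-1/2$. The same defect sits behind your treatment of $I_4$ for $l>0$: the assertion $\eta-z\ge \eta(\xi-\eta)/(2\xi-\eta)\gtrsim(\xi-\eta)$ fails for $\eta\ll\xi$ (the hidden constant is $\eta/(2\xi-\eta)$, not $1$), and if you instead keep the correct lower bound and integrate $z^{l+1/2}$ you end up with $C(\xi-\eta)^l\sqrt\eta$, which does not dominate $(\xi-\eta)^l(\sqrt\xi-\sqrt\eta)$ when $\eta\to\xi$.

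The paper's device that fixes all of this is to keep the ratio $(\xi-z)/z$ (or $(z-\eta)/z$) together rather than bounding the two factors separately. On the inner subinterval $z\in\bigl(\eta,\tfrac{2\xi\eta}{\xi+\eta}\bigr)$ one has $\tfrac{\xi-\eta}{2\eta}\le\tfrac{\xi-z}{z}\le\tfrac{\xi-\eta}{\eta}$, and on the outer one $\tfrac{\xi-\eta}{2\xi}\le\tfrac{z-\eta}{z}\le\tfrac{\xi-\eta}{\xi}$; raising to the $l$-th power and cancelling against the prefactor $\eta^l$ (resp.\ $\xi^l$) leaves exactly $(\xi-\eta)^l z^{-1/2}$, which integrates to $(\xi-\eta)^l(\sqrt\xi-\sqrt\eta)$. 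For the logarithmic pieces the paper uses the elementary inequality $\log x\le x^{1/2}/(e/2)$ for $x\ge 1$, which converts the log into an extra half-power and reduces the problem to the same ratio trick with exponent $l+\tfrac12$; merely saying ``the log singularity is integrable'' does not control the dependence on $\xi,\eta$. For $I_4$ with $l>0$ the paper changes variable to $u=z/(\eta-z)$, which again isolates the right power of $(\xi-\eta)$.

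Your discussion of the $-\tfrac12\le l<0$ branch of \eqref{V2} is closer to what is needed: combining $z\cdot z^{l-1/2}=z^{l+1/2}$ (bounded by $\eta^{l+1/2}$) and integrating $(\eta-z)^{-l-1}$ over $[0,\eta]$ does give the factor $(\xi-\eta)^{1+2l}\xi^{-l-1/2}$, and the paper then converts the $(\sqrt\xi-\sqrt\eta)$ bounds for $I_5,I_6$ into the same form via $\sqrt\xi-\sqrt\eta\le(\xi-\eta)^{1+l}\xi^{-l-1/2}$, valid precisely for $l<0$.
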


\begin{proof}
We see using the estimates \eqref{v_1_est1} and \eqref{v_1_est2} that
\[
\begin{split}
\int_\eta^\xi \bigl|v_1(z,0;\xi,\eta)\bigr| z^{l-1/2}\,dz &\le C_2C_3 \eta^l \int_\eta^{\frac{2\xi\eta}{\xi+\eta}}\frac{(\xi-z)^l}{z^{l+1/2}}\,dz + C_1 \xi^l \int_{\frac{2\xi\eta}{\xi+\eta}}^\xi \frac{(z-\eta)^l}{z^{l+1/2}}\,dz
\\
& \quad+ C_2 \eta^l \int_\eta^{\frac{2\xi\eta}{\xi+\eta}}\frac{(\xi-z)^l}{z^{l+1/2}}\log \left(\frac{(\xi-z)\eta}{(z-\eta)\xi}\right) \,dz= :I_1+I_2+I_3.
\end{split}
\]
To bound the first integral note that $\frac{\xi-\eta}{2\eta}\le \frac{\xi-z}{z}\le \frac{\xi-\eta}{\eta}$, i.e., $\frac{(\xi-z)^l}{z^l}\le C_l\frac{(\xi-\eta)^l}{\eta^l}$, where $C_l=\max(1,2^{-l})$. Hence
\[
I_1\le C_2C_3C_l (\xi-\eta)^l \int_\eta^{\frac{2\xi\eta}{\xi+\eta}}\frac{dz}{\sqrt{z}}= 2C_2C_3C_l (\xi-\eta)^l\left(\sqrt{\frac{2\xi\eta}{\xi+\eta}} - \sqrt\eta\right) \le 2C_2C_3C_l (\xi-\eta)^l\left(\sqrt\xi - \sqrt\eta\right).
\]
The second integral can be estimated similarly. For the third integral note that the argument of the logarithmic function belongs to $[1,\infty)$ and for any $x\ge 1$ and $\varepsilon>0$ the following inequality holds
\[
\log x \le \frac{x^\varepsilon}{\varepsilon e}.
\]
We take $\varepsilon=1/2$ and similarly to the previous cases obtain that
\[
\begin{split}
I_3&\le \frac{2C_2\eta^{l+1/2}}{e \xi^{1/2}}\int _\eta^{\frac{2\xi\eta}{\xi+\eta}}\frac{(\xi-z)^{l+1/2}}{z^{l+1/2}(z-\eta)^{1/2}}\,dz\le \frac{2C_2(\xi-\eta)^{l+1/2}}{e \xi^{1/2}}\int _\eta^{\frac{2\xi\eta}{\xi+\eta}}\frac{dz}{(z-\eta)^{1/2}}\\
&=
\frac{4C_2(\xi-\eta)^{l+1/2}}{e \xi^{1/2}}
\frac{\eta^{1/2}(\xi-\eta)^{1/2}}{\sqrt{\xi+\eta}}\le \frac{8C_2}{e}(\xi-\eta)^{l}\left(\sqrt\xi - \sqrt\eta\right).
\end{split}
\]
Summing up the obtained inequalities we get \eqref{V1}.
For the integral \eqref{V2} using the estimates \eqref{v_2_est1} and \eqref{v_2_est2} we have
\begin{equation*}
\begin{split}
\int_{0}^{\eta}{\left\vert v_{2}\left(  z,0;\xi,\eta\right)  \right\vert
z^{l-\frac{1}{2}}}dz & \le C_4\frac{(\xi-\eta)^{1+2l}}{\xi^{1+l}} \int_0^{\frac{\xi \eta}{2\xi-\eta}} \frac{z^{l+1/2}}{(\eta-z)^{l+1}}\,dz +
C_5C_6(\xi-\eta)^l \int_{\frac{\xi \eta}{2\xi-\eta}}^\eta \frac{dz}{z^{1/2}}\\
& \quad+ C_5 (\xi-\eta)^l \int_{\frac{\xi \eta}{2\xi-\eta}}^\eta \log\left(\frac{z(\xi-\eta)}{\xi(\eta-z)}\right)\frac{dz}{z^{1/2}} =: I_4+I_5+I_6.
\end{split}
\end{equation*}
The integral $I_4$ can be bounded as follows. Let $l>0$. Then 
\begin{equation*}
\begin{split}
I_4 & = C_4\frac{(\xi-\eta)^{1+2l}}{\eta\xi^{1+l}} \int_0^{\frac{\xi \eta}{2\xi-\eta}} \left(\frac z{\eta-z}\right)^{l-1} z^{3/2} d\left(\frac z{\eta-z}\right) \\
& \le C_4\frac{(\xi-\eta)^{1+2l}}{\eta\xi^{1+l}} \frac{\eta^{3/2}\xi^{3/2}}{(2\xi-\eta)^{3/2}}\int_0^{\frac{\xi \eta}{2\xi-\eta}} \left(\frac z{\eta-z}\right)^{l-1} d\left(\frac z{\eta-z}\right)\\
&=C_4\frac{(\xi-\eta)^{l+1}\eta^{1/2}\xi^{1/2}}{l(2\xi-\eta)^{3/2}}\le \frac{2C_4}{l} (\xi-\eta)^l \left(\sqrt{\xi}-\sqrt{\eta}\right),
\end{split}
\end{equation*}
while for the case $-1/2\le l< 0$ we have
\begin{equation*}
    \begin{split}
       I_4 & \le C_4\frac{(\xi-\eta)^{1+2l}}{\xi^{1+l}} \frac{\eta^{l+1/2}\xi^{l+1/2}}{(2\xi-\eta)^{l+1/2}} \int_0^{\frac{\xi \eta}{2\xi-\eta}} \frac {dz}{(\eta-z)^{l+1}} \\
         & \le  C_4\frac{(\xi-\eta)^{1+2l}\eta^{l+1/2}}{\xi^{1+l}} \frac{\eta^{-l}}{(-l)}\le C_4\frac{(\xi-\eta)^{1+2l}}{(-l)\xi^{1/2+l}}.
     \end{split}
\end{equation*}
For the integral $I_5$ we have
\begin{equation*}
\begin{split}
I_5 &= 2C_5C_6 (\xi-\eta)^l \left(\sqrt\eta - \sqrt\frac{\xi\eta}{2\xi-\eta}\right)
= 2C_5C_6 (\xi-\eta)^l \frac{\eta(\xi-\eta)}{(2\xi-\eta)\left(\sqrt\eta + \sqrt\frac{\xi\eta}{2\xi-\eta}\right)}\\
&\le 2C_5C_6 (\xi-\eta)^l \frac{\eta\left(\sqrt\xi+\sqrt\eta\right)\left(\sqrt\xi-\sqrt\eta\right)}{2\sqrt\eta(2\xi-\eta)}\le 2C_5C_6 (\xi-\eta)^l\left(\sqrt\xi-\sqrt\eta\right).
\end{split}
\end{equation*}
For the integral $I_6$ we change the logarithmic function as for the integral $I_3$ and obtain
\[
I_6\le \frac{2C_5(\xi-\eta)^{l+1/2}}{e \xi^{1/2}}\int_{\frac{\xi \eta}{2\xi-\eta}}^\eta \frac{dz}{(\eta-z)^{1/2}}=\frac{4C_5(\xi-\eta)^{l+1/2}}{e \xi^{1/2}}\frac{\eta^{1/2}(\xi-\eta)^{1/2}}{(2\xi-\eta)^{1/2}}\le \frac{8C_5}{e}(\xi-\eta)^l\left(\sqrt\xi-\sqrt\eta\right).
\]
Additionally for $l<0$ one has 
\[
\sqrt\xi -\sqrt\eta = \frac{\xi - \eta}{\sqrt\xi +\sqrt\eta} \le \frac{\xi - \eta}{\sqrt\xi}\le \frac{\xi - \eta}{\sqrt\xi}\cdot \left(\frac{\xi}{\xi-\eta}\right)^{-l} = \frac{(\xi-\eta)^{1+l}}{\xi^{l+1/2}},
\]
which finishes the proof.
\end{proof}

Note that $\sqrt\xi-\sqrt\eta\le\sqrt{\xi-\eta}$, hence we immediately obtain
the following corollary.

\begin{Corollary}
\label{Estimates Riemann 2} The following inequalities hold
\[
\int_{\eta}^{\xi}{\left\vert v_{1}\left(  z,0;\xi,\eta\right)  \right\vert
z^{l-\frac{1}{2}}}dz\leq A_{1}\left(  \xi-\eta\right)  ^{l+1/2}%
\]
and
\[
\int_{0}^{\eta}{\left\vert v_{2}\left(  z,0;\xi,\eta\right)  \right\vert
z^{l-\frac{1}{2}}}dz\leq A_{2}\left(  \xi-\eta\right) ^{l+1/2},
\]
where the constants $A_{1}$, $A_{2}$ do not depend on $\xi$ and $\eta$.
\end{Corollary}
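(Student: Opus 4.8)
The plan is to derive the Corollary as an immediate consequence of Lemma~\ref{Estimates Riemann}, whose bounds \eqref{V1} and \eqref{V2} are already almost in the required form. The single additional ingredient is the elementary inequality $\sqrt{\xi}-\sqrt{\eta}\le\sqrt{\xi-\eta}$, valid for $0\le\eta\le\xi$, which I would justify by squaring, since $(\sqrt{\xi}-\sqrt{\eta})^{2}=(\xi-\eta)-2\sqrt{\eta}\,(\sqrt{\xi}-\sqrt{\eta})\le\xi-\eta$.

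For the first inequality I would simply invoke \eqref{V1}, which reads $\int_{\eta}^{\xi}|v_{1}(z,0;\xi,\eta)|\,z^{l-1/2}\,dz\le A_{1}(\xi-\eta)^{l}(\sqrt{\xi}-\sqrt{\eta})$ and holds for every $l\ge-1/2$ without a case split, and then replace $\sqrt{\xi}-\sqrt{\eta}$ by $(\xi-\eta)^{1/2}$ to obtain the bound $A_{1}(\xi-\eta)^{l+1/2}$ with the same constant.

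For the second inequality I would follow the two cases in \eqref{V2}. When $l>0$ the right-hand side $A_{2}(\xi-\eta)^{l}(\sqrt{\xi}-\sqrt{\eta})$ becomes $A_{2}(\xi-\eta)^{l+1/2}$ exactly as before. When $-1/2\le l<0$ the bound in \eqref{V2} is $A_{2}(\xi-\eta)^{l+1/2}\bigl(1-\eta/\xi\bigr)^{l+1/2}$; here I would note that $0\le\eta\le\xi$ gives $0\le 1-\eta/\xi\le1$ while the exponent satisfies $l+1/2\ge0$, so $\bigl(1-\eta/\xi\bigr)^{l+1/2}\le1$ and the right-hand side is again at most $A_{2}(\xi-\eta)^{l+1/2}$.

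There is no real obstacle: the statement is a uniform weakening of Lemma~\ref{Estimates Riemann}, and the constants $A_{1},A_{2}$ are inherited verbatim, hence independent of $\xi$ and $\eta$. The only point that deserves a moment's attention is the sign of the exponent $l+1/2$ in the range $-1/2\le l<0$, which is precisely what permits discarding the factor $\bigl(1-\eta/\xi\bigr)^{l+1/2}$ in the second case.
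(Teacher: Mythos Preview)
Your proposal is correct and follows exactly the paper's approach: the paper simply remarks that $\sqrt{\xi}-\sqrt{\eta}\le\sqrt{\xi-\eta}$ and states the corollary as an immediate consequence of Lemma~\ref{Estimates Riemann}. Your treatment is in fact slightly more explicit than the paper's, since you spell out why the extra factor $\bigl(1-\eta/\xi\bigr)^{l+1/2}$ in the case $-1/2\le l<0$ can be dropped.
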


\section*{Acknowledgements}
The authors are grateful to Markus Holzleitner for pointing us an inaccuracy in the proof of Lemma \ref{Estimates Riemann} in the first version of this paper.

\end{document}